\DeclareMathAlphabet{\mathpzc}{OT1}{pzc}{m}{it}
\theoremstyle{plain}
\newcommand{\refnewtheoremn}[4]{
\newaliascnt{#1}{#2}
\newtheorem{#1}[#1]{#3}
\aliascntresetthe{#1}
\expandafter\providecommand\csname #1autorefname\endcsname{#4}}
\newcommand{\refnewtheorem}[3]{\refnewtheoremn{#1}{#2}{#3}{#3}}
\def\makeCal#1{
\expandafter\newcommand\csname c#1\endcsname{\mathcal{#1}}}
\def\makeBB#1{
\expandafter\newcommand\csname b#1\endcsname{\mathbb{#1}}}
\def\makeFrak#1{
\expandafter\newcommand\csname f#1\endcsname{\mathfrak{#1}}}
\edef\y{\@Alph\count@}
\newtheorem{thm}{Theorem}[section]
\newtheorem{theorem}{Theorem}[section]
\DeclareMathOperator{\sgn}{\operatorname{sgn}}
\theoremstyle{definition}
\newcommand {\id}{\operatorname{id}}
\renewcommand{\Im}{\operatorname{Im}}
\renewcommand{\Re}{\operatorname{Re}}
\newcommand {\Hom}{\operatorname{Hom}}
\newcommand {\Aut}{\operatorname{Aut}}
\newcommand{\SL}{\operatorname{SL}}
\newcommand{\tr}{\operatorname{tr}}
\newcommand{\GL}{\operatorname{GL}}
\newcommand{\Tri}{\operatorname{Tri}}
\newcommand{\diag}{\operatorname{diag}}
\newcommand{\PGL}{\operatorname{PGL}}
\newcommand{\MCG}{\operatorname{MCG}}
\newcommand{\Proj}{\operatorname{\cP}}
\newcommand{\res}{\operatorname{res}}
\newcommand{\Zer}{\mathrm{Zer}}
\newcommand{\Pol}{\mathrm{Pol}}
\newcommand{\Crit}{\operatorname{Crit}}
\newcommand{\onto}{\twoheadrightarrow}
\newcommand{\lra}{\longrightarrow}
\newcommand{\into}{\hookrightarrow}
\newcommand{\lRa}[1]{\stackrel{#1}{\lra}}
\newcommand{\isom}{\cong}
\newcommand{\tensor}{\otimes}
\renewcommand{\O}{\mathscr{O}}
\newcommand{\mat}[4]{\begin{pmatrix}#1&#2\\#3&#4\end{pmatrix}}
\newcommand{\vect}[2]{\begin{pmatrix}#1\\#2\end{pmatrix}}
\newcommand{\constant}{\operatorname{constant}}
\newcommand{\Grp}{\operatorname{Grp}}
\newcommand{\Set}{\operatorname{Set}}
\renewcommand{\varepsilon}{v}
\begin{document}

\title{The monodromy of meromorphic projective structures}
\author{Dylan G.L. Allegretti}
\author{Tom Bridgeland}

\date{}

\begin{abstract}{We study projective structures on a surface having poles of prescribed  orders. We obtain a monodromy map from a complex manifold parameterising such structures to the stack of  framed $\PGL_2(\bC)$ local systems on the associated marked bordered surface.  We prove that the image of this map is contained in the union of the domains of the cluster charts. We discuss a number of open questions concerning this monodromy map.} \end{abstract}

\maketitle

\section{Introduction}

This paper is concerned with the monodromy of projective structures on Riemann surfaces. A projective structure  can be viewed as a global generalization of a differential equation of the form
\[
\label{schro}y''(z) -\varphi(z) \cdot y(z)=0,
\]
where primes denote differentiation with respect to $z$.
In another language it is an oper for the group $\PGL_2(\bC)$.
Our main focus will be on the case when the potential $\varphi(z)$ has poles, but we begin by recalling some of the classical results on holomorphic projective structures. For excellent surveys on this material we recommend \cite{Dumas,LM,Tyurin}.

\subsection{Holomorphic projective structures}

A projective structure $\cP$ on a Riemann surface $S$ is an atlas of holomorphic charts
\[f_i\colon U_i\to \bP^1,  \qquad S=\bigcup_{i\in I} U_i,\]
with the property that each transition function  $g_{ij}=f_i\circ f_j^{-1}$ is the restriction of an element of \[G=\Aut(\bP^1)=\PGL_2(\bC).\]  

Projective structures  are closely related to quadratic differentials.  If $\cP$ is a projective structure on a Riemann surface $S$, with a local chart $z\colon U\to \bP^1$, and \begin{equation}
\label{qu}\phi(z)=\varphi(z)\, dz^{\tensor 2}\end{equation}
is a quadratic differential on $S$, written in terms of the co-ordinate $z$, then one obtains a chart in a new projective structure $\cP'=\cP+\phi$ on the surface $S$ by considering the ratio of two independent solutions to the differential equation
\begin{equation}
\label{schrod}y''(z) -\varphi(z) \cdot y(z)=0.\end{equation}
This construction gives the set of projective structures on a fixed compact Riemann surface $S$ of genus $g=g(S)$ the structure of an affine space for the vector space of holomorphic quadratic differentials \begin{equation}
\label{spacc}H^0(S,\omega_S^{\tensor 2})\isom \bC^{3g-3}.\end{equation} 

Let us fix a closed, oriented surface $\bS$ of genus  $g=g(\bS)$. A marked projective structure  is then defined to be a  triple $(S,\cP,\theta)$, where $S$ is a Riemann surface equipped with a projective structure~$\cP$, and $\theta$ is a marking, that is,  an isotopy class of orientation-preserving diffeomorphisms $\theta\colon \bS\to S$. Two such triples $(S_i,\cP_i,\theta_i)$ will be considered to be equivalent if there is a biholomorphism $f\colon S_1\to S_2$ which preserves the projective structures and  the markings in the obvious way.

The set  $\cP(\bS)$ of equivalence classes of marked projective structures   has the natural structure of a complex manifold of dimension $6g-6$. 
There is an obvious forgetful map
\begin{equation}\label{forge}p\colon \cP(\bS)\to \cT(\bS)\end{equation}
to the Teichm{\"u}ller space $\cT(\bS)$, viewed as the moduli space  of Riemann surfaces $S$ equipped with a marking $\theta\colon \bS\to S$.
A relative version of the construction described above shows that this  map \eqref{forge} is an affine bundle for the vector bundle 
\[\label{forger}q\colon \cQ(\bS)\to \cT(\bS),\] whose fibre over a marked Riemann surface $(S,\theta)$ is the vector space  \eqref{spacc}.

\subsection{Monodromy of projective structures}
\label{sni}

A projective structure  $\cP$ on a Riemann surface $S$ has an associated developing map: a holomorphic map \[f\colon \tilde{S}\to \bP^1,\] where $\pi\colon \tilde{S}\to S$ is the universal covering surface, such that  any injective locally-defined map of the form $f\circ \pi^{-1}$ is a chart in $\cP$. Such a  developing map  gives rise to a monodromy representation
\begin{equation}
\label{rep1}\rho\colon \pi_1(S)\to G,\end{equation}
defined by the the condition $f(\gamma\cdot x)=\rho(\gamma)\cdot  f(x)$. The developing map $f$  is unique up to post-composition with an element of the group $G$, and the monodromy representation is therefore well-defined up to overall conjugation by an element of $G$. More abstractly, we can think in terms of a $G$ local system naturally associated to the projective structure.

Let us fix a closed, oriented surface $\bS$ as above and consider the  quotient stack \begin{equation}
\label{cs}\cX(\bS)=\Hom(\pi_1(\bS),G)/G\end{equation}
parameterising representations \eqref{rep1} up to overall conjugation, or equivalently, isomorphism classes of $G$ local systems on $\bS$. The monodromy of a marked projective structure on $\bS$ defines a point of this stack in the obvious way. Gunning \cite{Gun} proved that when $g=g(\bS)>1$, this point  lies in the open substack $\cX^*(\bS)$ consisting of representations with non-commutative image. This substack  is a (possibly non-Hausdorff) complex manifold, and there is a  holomorphic map 
\begin{equation}\label{monmap}F\colon \cP(\bS)\to \cX^*(\bS)\end{equation}
sending a marked projective structure to its monodromy representation.

The map $F$ has been studied for over a century. Let us briefly recall some of the better known results. A result of Poincar{\'e} \cite{Po} shows that $F$ is injective when restricted to each fibre of the forgetful map \eqref{forge}. Hejhal \cite{Hejhal} proved that $F$ is a local homeomorphism, and Earle \cite{Earle} and Hubbard~\cite{Hub} showed that $F$ is moreover a local biholomorphism. It is also known that $F$ has infinite fibres and is not a covering map of its image, see e.g. \cite{Dumas}. Finally, a famous theorem of Gallo, Kapovich and Marden \cite{GKM} characterises the image of  $F$.

\subsection{Meromorphic projective structures}
In  this paper we study a monodromy map analogous to \eqref{monmap}  but for meromorphic projective structures. 
 The notion of a meromorphic projective structure has meaning because of the above-mentioned fact  that the set of projective structures on a fixed Riemann surface $S$ is an affine space for the space of quadratic differentials. 
Concretely, if we fix a holomorphic projective structure on $S$ as above, the local charts in a meromorphic projective structure  are obtained by taking ratios of solutions to  the equation  \eqref{schrod}, where  the quadratic differential \eqref{qu} is now allowed to have poles. 

Note that when $\varphi(z)$ has a pole $p$ of order $m >2$ the equation \eqref{schrod}  has an irregular singularity, and one should expect generalised monodromy in the form of Stokes data to enter the picture. A  solution $y(z)$ to the equation \eqref{schrod} defined in a sectorial neighbourhood centered at $p$  is called subdominant if $y(z)\to 0$ as $z\to p$. Standard results in the theory of differential equations show that there are $m-2$ distinguished sectors centered at $p$, known as Stokes sectors,  in which there exist unique-up-to-scale subdominant solutions to \eqref{schrod}. The rays in the centres of these distinguished sectors are called Stokes directions and coincide with the asymptotic directions at $p$ of the horizontal foliation defined by the quadratic differential $\phi(z)$.

Our results depend on a choice of genus $g\geq 0$ and a non-empty collection of positive integers  giving the orders of poles of the projective structures. It is more convenient to represent this data in the form of a marked bordered surface $(\bS,\bM)$. This is a compact, connected, oriented  surface  $\bS$ with (possibly empty) boundary, equipped with a non-empty collection of marked points $\bM\subset \bS$, such that each boundary component of $\bS$ contains at least one point of $\bM$. We denote by $\bP\subset \bM$ the set of internal marked points, which we also refer to as punctures.

A meromorphic projective structure $\cP$ on a compact Riemann surface $S$ naturally determines a marked bordered surface $(\bS,\bM)$. The surface $\bS$ is the real oriented blow-up of $S$ at the  poles of $\cP$ of order $>2$, which are precisely the irregular singularities of the differential equation \eqref{schrod}, and  the boundary marked points correspond to  the Stokes directions. The internal marked points  are the poles of $\cP$ of order  $\leq 2$, which are precisely the regular singularities of the equation \eqref{schrod}.

Let us  fix a marked bordered surface $(\bS,\bM)$. If $g(\bS)=0$  we assume that $|\bM|\geq 3$. In Section \ref{msps} we introduce a complex manifold $\Proj(\bS,\bM)$ parameterising marked meromorphic projective structures in much the same way as before. The points of $\Proj(\bS,\bM)$ are equivalence-classes of triples $(S,\cP,\theta)$, where $S$ is a Riemann surface equipped with a meromorphic projective structure $\cP$, and $\theta$ is an isotopy class of orientation-preserving diffeomorphisms between the marked bordered surface canonically associated to $(S,\cP)$ and the fixed surface $(\bS,\bM)$.

It will be convenient to  introduce two modifications to the manifold $\Proj(\bS,\bM)$. Firstly,  we define a dense open subset \[\Proj^\circ(\bS,\bM)\subset \Proj(\bS,\bM),\] whose complement consists of projective structures with apparent singularities: regular singularities for which  the corresponding monodromy transformation is trivial as an element of $G=\PGL_2(\bC)$.  We do this because  the monodromy map of Theorem \ref{one} is not well-defined for such projective structures. Secondly, we introduce a branched cover\begin{equation}
\label{coverproj1}\pi\colon \Proj^*(\bS,\bM)\to \Proj^\circ(\bS,\bM)\end{equation}    of  degree $2^{|\bP|}$,  whose points parameterise marked projective structures equipped with a signing: a choice of eigenline for the monodromy around each regular singularity. 
The main point of  this is to obtain a monodromy map taking values in the stack of framed local systems, which as we explain below, is rational and carries interesting birational co-ordinate systems. An analogous cover also turns out to be very natural in the context of moduli spaces of meromorphic quadratic differentials \cite[Section 6.2]{BS}.

\subsection{Monodromy and framed local systems}

 Let us now turn to the analogue of the character stack \eqref{cs} in the meromorphic situation. A framed  $G=\PGL_2(\bC)$ local system  on a marked bordered surface $(\bS,\bM)$ is defined to be a $G$ local system $\cG$ on the punctured  surface $\bS^*=\bS\setminus\bP$ equipped with the data of a framing: a choice of flat section $\ell(p)$ of the restriction of the associated $\bP^1$-bundle
\[\cL=\cG\times_G \bP^1\]
to a neighbourhood of each point $p\in \bM$. The  moduli stack $\cX(\bS,\bM)$ of  framed $G$ local systems on $(\bS,\bM)$ was introduced by Fock and Goncharov in \cite{IHES}.

We shall call a framed local system \emph{degenerate} if one of the following  statements holds:
\begin{itemize}
\item[(D1)] There is a connected component $I$ of the punctured boundary $\partial \bS\setminus \bM$ such that the two chosen sections $\ell(p_i)$ defined near the points $p_1,p_2\in \bM$ lying at the ends of  $I$    coincide under parallel transport along $I$.
\smallskip

\item[(D2)]
There exists an unordered  pair of distinct points  $\ell_\pm(p)$ in each fibre of the bundle $\cL$, each of which locally defines a flat section of $\cL$ but which may be permuted by the monodromy of $\cL$,   such that for each $p\in \bM$ the chosen section $\ell(p)$ coincides with one of the $\ell_\pm(p)$. \smallskip

\item[(D3)]  The surface $\bS$ is closed, there  exists a single flat section $\ell$ of the bundle $\cL$ over $\bS^*$ such that for each $p\in \bM$ the chosen section $\ell(p)$ coincides with $\ell$,   and moreover, the monodromy around each puncture is either parabolic or the identity.
 \end{itemize}
  
Note that for any given surface $(\bS,\bM)$ we need only ever consider two of the above conditions, since when $\bS$ is  closed  condition (D1) is vacuous.
We show that there is an open substack \[\cX^*(\bS,\bM)\subset \cX(\bS,\bM)\] parameterising non-degenerate framed local systems, and  use  Fock-Goncharov co-ordinates (see Section \ref{cluco} below) to show that it is a (possibly non-Hausdorff) complex manifold. 
 
 A meromorphic projective structure on a Riemann surface $S$ induces a holomorphic projective structure on the complement  of its poles, and hence a monodromy $G$ local system on the associated punctured surface $\bS^*$. We show that for signed projective structures without apparent singularities this $G$ local system has a natural framing. At regular singularities, this   is provided by the choice of signing. At irregular singularities it  is provided by the unique up-to-scale subdominant solutions defined on Stokes sectors. We call the resulting framed local system the monodromy framed local system of the signed projective structure.
 
 In Section \ref{locale} we prove that the monodromy framed local system of a signed projective structure without apparent singularities is non-degenerate. This leads to the following result.

\begin{thm}
\label{one}
Let $(\bS,\bM)$ be a marked bordered surface, and if $g(\bS)=0$ assume that $|\bM|\geq 3$.  Then there is a holomorphic map
\begin{equation}
\label{map}F\colon \Proj^*(\bS,\bM)\to \cX^*(\bS,\bM),\end{equation}
sending a signed marked projective structure to its  monodromy  framed $G$ local system.
\end{thm}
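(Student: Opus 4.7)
The plan is to construct $F$ directly from the given data and then verify it lands in the non-degenerate locus and is holomorphic. Given a class in $\Proj^*(\bS,\bM)$ represented by a signed marked meromorphic projective structure $(S,\cP,\theta)$, restrict $\cP$ to the open Riemann surface $S^\circ\subset S$ obtained by deleting all poles; the restriction is a holomorphic projective structure and therefore yields a monodromy representation $\rho\colon \pi_1(S^\circ)\to G$ as recalled in Section \ref{sni}. Since the marked bordered surface canonically associated to $(S,\cP)$ is identified with $(\bS,\bM)$ via $\theta$, and $S^\circ$ is identified with $\bS^*=\bS\setminus\bP$, one obtains a $G$ local system $\cG$ on $\bS^*$, well-defined up to isomorphism. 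Let $\cL=\cG\times_G \bP^1$ denote the associated $\bP^1$-bundle.

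Next I would equip $\cG$ with a framing. At each puncture $p\in\bP$, which is a regular singularity of the Schwarzian equation \eqref{schrod} presenting $\cP$, the signing specifies an eigenline in $\bP^1$ for the local monodromy around $p$; extending by parallel transport gives a flat section $\ell(p)$ of $\cL$ in a punctured neighbourhood of $p$. At each boundary marked point $q\in\bM\setminus\bP$, lying at the real oriented blow-up of an irregular singularity of order $m>2$, standard asymptotic theory of \eqref{schrod} provides a unique up-to-scale subdominant solution in the corresponding Stokes sector; the projective line it spans is a flat section $\ell(q)$ of $\cL$ in a sectorial neighbourhood of $q$. This gives the framed $G$ local system to be declared the image of the class under $F$.

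Well-definedness on equivalence classes follows because a biholomorphism of Riemann surfaces intertwining projective structures, markings, and signings transports the developing map, the eigenline at each puncture, and the subdominant solution in each Stokes sector, inducing a canonical isomorphism of framed local systems. The assertion that the image actually lies in the open substack $\cX^*(\bS,\bM)$, i.e.\ is non-degenerate, is the analytic result promised in Section \ref{locale}, which I would simply invoke. For holomorphicity I would work locally: parameterise a neighbourhood in $\Proj^*(\bS,\bM)$ by a holomorphic family of Schwarzian equations $y''(z)-\varphi(z,t)\,y(z)=0$ and appeal to holomorphic dependence of solutions of linear ODEs on parameters, both for the monodromy around loops in $\bS^*$ and, via Sibuya-type results, for the subdominant solutions in the Stokes sectors, together with the evident holomorphic dependence of the signing. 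Composition with any Fock--Goncharov chart on $\cX^*(\bS,\bM)$ then yields a holomorphic function.

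The substantive obstacle is the non-degeneracy claim, i.e.\ ruling out (D1)--(D3). Excluding (D1) reduces to showing that the subdominant solutions associated to two Stokes sectors flanking the same boundary arc are never proportional under parallel transport along that arc, which follows because subdominant solutions in adjacent Stokes sectors span the two-dimensional space of local solutions. Excluding (D2) and (D3) requires ruling out more delicate global coincidences between subdominant solutions at distinct irregular singularities and the chosen eigenlines at punctures; the hypothesis that $\cP$ has no apparent singularity is used here to make the framings at punctures non-trivial, but the analysis still demands a careful global argument, and this is where the real work of the theorem lies.
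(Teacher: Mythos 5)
Your construction of $F$, the framings at punctures and at boundary marked points, the well-definedness on equivalence classes, and the local-lifting strategy for holomorphicity (holomorphic families of Schwarzian equations, holomorphic dependence of monodromy and of subdominant solutions on parameters) all match the paper's route. But there are two genuine gaps, and they are precisely the two places where the theorem has content.

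First, you do not prove non-degeneracy: you dispose of (D1) correctly via the opposedness of subdominant solutions in adjacent Stokes sectors, but for (D2) and (D3) you only observe that "a careful global argument" is needed and stop. That argument is the core of the theorem. The paper rules out (D3) by noting it forces a reducible monodromy representation with all puncture monodromies parabolic, contradicting a theorem of Faltings. For (D2) the paper passes to the meromorphic oper $(E,L,\nabla)$: in the split case two global flat sections of $\cL$ would give a direct-sum decomposition $F=L_1\oplus L_2$ of the twisted bundle $F$, while the exact sequence $0\to L\to F\to L(-Q)\otimes\omega_S^*\to 0$ shows $F$ is unstable with Harder--Narasimhan filtration $0\subset L\subset F$; uniqueness of that filtration forces one of the $L_i$ to equal $L$, making $L$ $\nabla$-invariant and contradicting the oper condition. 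The general (D2) case is reduced to the split case by passing to the unramified double cover determined by the permutation representation $\sigma\colon\pi_1(\bS)\to\{\pm 1\}$. None of this is recoverable from local ODE analysis alone; it genuinely uses the global geometry of the underlying rank-two bundle.

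Second, your final step — "composition with any Fock--Goncharov chart on $\cX^*(\bS,\bM)$ then yields a holomorphic function" — presupposes that $\cX^*(\bS,\bM)$ is a complex manifold covered by such charts, i.e.\ that every non-degenerate framed local system is generic with respect to some tagged triangulation. This is Theorem \ref{extra}, proved in the paper by a separate combinatorial argument (maximising the number of "good" edges of an ideal triangulation and using flips, with a signing adjustment at a puncture when the surface is closed). Without it, the target of $F$ is only known to be the quotient of a variety by a free $G$-action, and the assertion that $F$ is holomorphic into $\cX^*(\bS,\bM)$ is not yet meaningful. The paper circumvents this at the level of the lifted map by proving holomorphicity of $B\to X(\bS,\bM)$ directly, and then separately establishes the manifold structure on the quotient; you need one of these two devices.
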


The hypothesis in the statement of  Theorem \ref{one} excludes six marked bordered surfaces. We discuss these degenerate cases in more detail in Section \ref{remark}. In the two cases when $g(\bS)=0$ and $|\bM|=1$, both sides of \eqref{map} are easily seen to be empty, so the result is vacuously true. The remaining four cases, when $g(\bS)=0$ and $|\bM|=2$, can be treated directly: in two of them the  statement of Theorem \ref{one} needs to be modified to account for the fact that the  space $\Proj^*(\bS,\bM)$ is no longer a manifold, since it has non-trivial generic automorphism group. 

\subsection{Cluster co-ordinates}
\label{cluco}
One very important and interesting feature of the meromorphic situation, which has no analogue in the holomorphic case, is that the stack $\cX(\bS,\bM)$ of framed local systems, when considered in the algebraic category,  is rational. 
To prove this, Fock and Goncharov~\cite{IHES} constructed an explicit  system of birational maps \[X_T\colon \cX(\bS,\bM)\dashrightarrow (\bC^*)^n\] indexed by the ideal triangulations $T$ of the surface $(\bS,\bM)$. Moreover, they proved that the inverse birational maps induce  open embeddings
\[X_T^{-1}\colon (\bC^*)^n \into \cX(\bS,\bM).\]

In fact, when the surface $(\bS,\bM)$ has punctures, the correct combinatorial object to consider  is not an ideal triangulation, but a variant known as a tagged triangulation \cite{FST}. The set of tagged triangulations contains the set of ideal triangulations within it, but has the additional feature that the graph whose vertices are tagged triangulations and whose edges are flips is always regular. It is easy to  extend Fock and Goncharov's
definition to give birational maps \[X_\tau\colon \cX(\bS,\bM)\dashrightarrow (\bC^*)^n\] indexed by tagged triangulations $\tau$ of the surface $(\bS,\bM)$. Note however that for  general tagged triangulations, the inverse map $X_\tau^{-1}$ need not be regular on the whole algebraic torus $(\bC^*)^n$.

We say that a point of $ \cX(\bS,\bM)$ is generic with respect to a tagged triangulation $\tau$ if it lies in the domain of the map $X_\tau$.
In Section \ref{final} we  use a combinatorial argument to prove the following result.

\begin{thm}
\label{extra}
Any point of the open substack $\cX^*(\bS,\bM)$  is generic with respect to some tagged triangulation. If the surface $\bS$ has non-empty boundary this tagged triangulation can be taken to be an ideal triangulation. \comment{Do we need to exclude the three-punctured sphere?}
\end{thm}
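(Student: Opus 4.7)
My plan is to make the genericity condition completely explicit and then run a contrapositive combinatorial argument on the tagged arc complex of $(\bS,\bM)$. For a tagged triangulation $\tau$, the Fock--Goncharov coordinate $X_e$ at an edge $e$ is built from the four flat sections of the associated $\bP^1$-bundle $\cL$ at the vertices of the quadrilateral around $e$, where at a notched puncture $p$ one substitutes the other eigenline of the monodromy in place of $\ell(p)$. The coordinate is defined and lies in $\bC^*$ exactly when these four sections are pairwise distinct after parallel transport to a common fibre. I would call an edge \emph{bad} with respect to $\tau$ if some such coincidence occurs, and set up the proof by contrapositive: assume a given $x\in\cX(\bS,\bM)$ has a bad edge in \emph{every} tagged triangulation, and derive one of (D1), (D2), (D3).

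To navigate the tagged arc complex, I would use the two standard moves: edge flips within a quadrilateral, and tag changes at a single puncture (the latter requiring the puncture monodromy to be diagonalizable with distinct eigenvalues, so that a second eigenline is even available). The effect of each move on the set of framing coincidences is entirely local, so the hypothesis that a bad edge cannot be eliminated by any sequence of such moves translates into a local constraint in every quadrilateral and around every puncture. The bulk of the argument is to show that these local constraints propagate globally and collapse into one of the three patterns of (D1)--(D3): either two sections agree under parallel transport along a boundary arc, giving (D1); or all chosen framings lie in a single unordered pair $\{\ell_+(p),\ell_-(p)\}$ of flat sections permuted by monodromy, giving (D2); or in the closed-surface case all framings equal one global flat section and every puncture monodromy is parabolic or trivial, giving (D3). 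I expect the main difficulty to lie exactly here: the case analysis must rule out any exotic global coincidence pattern not covered by (D1)--(D3), particularly when several punctures interact through tag changes, and one must also handle small-case exceptions such as the three-punctured sphere where the flip/tag-change orbit is very restricted.

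For the second claim, I would argue that when $\partial \bS\neq\emptyset$ the tagged triangulation can be chosen to be an ideal one, with no notches. The idea is that the combinatorial freedom of flipping arcs against the boundary allows one to modify a generic tagged triangulation into an ideal one: each notched arc at a puncture $p$ can be replaced by a sequence of flips into a nearby boundary component, and one checks that no such replacement introduces a bad edge because the framings at boundary marked points are fixed data rather than eigenlines that might collide under tag-swaps. It is here that non-emptiness of the boundary is used essentially, since in the closed case a puncture whose chosen framing $\ell(p)$ genuinely disagrees with the correct eigenline needed for genericity can only be resolved by a notch.
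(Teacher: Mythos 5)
Your proposal correctly identifies the overall shape of the argument (pass to the contrapositive, study coincidences of framings edge by edge, and use flips and tag changes to either remove them or force one of (D1)--(D3)), but it defers exactly the step that constitutes the proof: you write that the bulk of the argument is to show that the local constraints "propagate globally and collapse into one of the three patterns", and that you "expect the main difficulty to lie exactly here", without supplying that argument. The missing idea in the paper is an extremal one. Call an arc \emph{good} if the two framings at its endpoints differ under parallel transport along it; since the four sides of the quadrilateral around an edge are themselves edges of the triangulation, a triangulation all of whose edges are good is automatically generic. (Note that your characterisation of genericity is too strong: only \emph{cyclically adjacent} vertices of the quadrilateral need carry distinct lines --- if the two endpoints of the edge itself carry the same line the cross-ratio equals $-1$, which is still in $\bC^*$.) One then takes an ideal triangulation with the \emph{maximal} number of good arcs and classifies triangles by how many bad edges they have. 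A good triangle adjacent to a bad one yields a pentagon whose vertices carry at least three distinct lines, which can be re-triangulated so that both diagonals are good, contradicting maximality; a bad triangle cannot share an internal edge with a very bad one for a similar reason; hence all triangles are bad, and a direct analysis of each quadrilateral then shows the framings take exactly two values everywhere, i.e.\ condition (D2) holds. Without this (or an equivalent) propagation argument there is no proof, only a statement of intent.

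Two further points. First, tag changes enter the argument only once and very mildly: if $\bS$ is closed and \emph{every} edge of some ideal triangulation is bad, then all framings come from a single flat section $\ell$, and non-(D3) provides a puncture $p$ whose monodromy is neither parabolic nor the identity; applying the single sign change $\sigma_p$ preserves non-degeneracy (Remarks \ref{remas}(v)) and creates a good edge, after which the extremal argument applies to $\sigma_p(\cG,\ell)$ and one pulls the resulting ideal triangulation back through $\sigma_p$. You do not need to navigate the full tagged arc complex or worry about several punctures interacting through tag changes. Second, your treatment of the boundary case is backwards: one does not start from a generic tagged triangulation and "flip the notches into the boundary" (a tag change is not a composition of flips, and cannot in general be undone by them). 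Rather, when $\partial\bS\neq\emptyset$ condition non-(D1) makes every boundary segment a good edge, so the extremal argument runs from the start with the trivial signing and produces an ideal triangulation directly; the closed case is the only one in which a nontrivial tag is ever needed.
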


Associated to a marked bordered surface $(\bS,\bM)$ is a space  known  as the cluster Poisson variety. It is defined as an abstract union of algebraic tori
\[\cX_{cl}(\bS,\bM)=\bigcup_{\tau} \, (\bC^*)^n\]
 indexed by the tagged triangulations of $(\bS,\bM)$, glued together with explicit birational transformations corresponding to flips of tagged triangulations. It can be viewed as a (possibly non-separated) smooth scheme, or as a (possibly non-Hausdorff) complex manifold. The birational maps $X_\tau$ induce a birational map
\[\cX(\bS,\bM)\dashrightarrow \cX_{cl}(\bS,\bM),\]
which is well-defined on the open subset of points which are generic with respect to some  tagged triangulation $\tau$. Thus, as a consequence of Theorem \ref{extra}, we can alternatively view the map $F$ as taking values in the cluster Poisson variety $\cX_{cl}(\bS,\bM)$. 

 \subsection{Relation to existing works}

Several special cases of the map of Theorem \ref{one} have been studied before in the literature. We discuss these below, along with some other related works.

\subsubsection{Work of Bakken and Sibuya}\label{bakks}

In the case when $(\bS,\bM)$ is a disc with $m+3$ marked points on the boundary,  the space $\Proj(\bS,\bM)$ parameterises marked projective structures on $\bP^1$ with a single pole of order $m+5$. The monodromy map of Theorem \ref{one}   was studied in detail in this case by Sibuya  and Bakken.  The space $\Proj(\bS,\bM)$  can be identified with the space of polynomials
\[\varphi(z)=z^{m+1}+a_{m-1} z^{m-1} + \cdots +a_1 z + a_0,\]
with the corresponding projective structure being given by ratios of linearly independent solutions to the equation \eqref{schrod}. The
stack $\cX(\bS,\bM)$ parameterises  cyclically ordered $(m+3)$--tuples of points of $\bP^1$ up to the diagonal action of $G=\PGL_2(\bC)$. The open substack $\cX^*(\bS,\bM)$ consists of tuples having no two consecutive points equal and containing at least three distinct points. The results of Sibuya \cite{Sibuya75} and Bakken \cite{Bakken} imply the following.

\begin{thm} Suppose that $(\bS,\bM)$ is an unpunctured disc with $\geq 3$ marked points on the boundary. Then the map $F$ of Theorem \ref{one} is a local biholomorphism, and is moreover surjective.
\end{thm}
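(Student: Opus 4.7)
The plan is to exploit the explicit description given in the text. Under the identifications indicated, $\Proj^*(\bS,\bM)$ is the affine space $\bC^m$ of polynomials $\varphi(z)=z^{m+1}+a_{m-1}z^{m-1}+\cdots+a_0$, and $\cX^*(\bS,\bM)$ is the set of cyclically ordered $(m+3)$-tuples $(\ell_1,\dots,\ell_{m+3})\in(\bP^1)^{m+3}$, modulo the diagonal $\PGL_2(\bC)$ action, subject to no two consecutive entries coinciding (the opposite of (D1)) and at least three distinct values appearing (the opposite of (D2)). A dimension count shows that both are smooth connected complex manifolds of dimension $m$, so the proof splits into showing that $F$ is a local biholomorphism and that it is surjective.

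For the first, since the manifolds are equidimensional it suffices to verify that $dF$ has trivial kernel at each $\varphi_0$. I would carry this out via variation of parameters: if $y_1,y_2$ is a basis of solutions to $y''-\varphi_0 y=0$, then $\partial_{a_j}y$ solves an inhomogeneous version of the same equation whose particular solution is an explicit Wronskian integral. Tracking how this infinitesimal variation rescales each subdominant solution $y_i$ in its Stokes sector, and hence moves the corresponding point $\ell_i\in\bP^1$, produces an $m\times m$ matrix for $dF$ whose entries are period integrals of products $z^j y_i y_k$ along suitable Stokes rays. Convergence follows from the super-exponential decay of $y_i$ in its sector, and non-degeneracy of the resulting matrix follows from a Vandermonde-type calculation using the leading WKB asymptotics of the subdominant solutions at infinity.

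For surjectivity, I would argue that $F$ is proper onto $\cX^*(\bS,\bM)$. Concretely, if $\varphi_n\to\infty$ in $\bC^m$ then a WKB analysis shows that either some Stokes sectors coalesce and subdominant solutions in adjacent sectors become asymptotically proportional — forcing two consecutive $\ell_i$ to collide, i.e.\ degeneracy of type (D1) — or else the full tuple $(\ell_1,\dots,\ell_{m+3})$ limits to at most two distinct values in $\bP^1$, i.e.\ degeneracy of type (D2). Hence $F$ is a proper local biholomorphism between connected equidimensional manifolds, so a finite covering map. Evaluating $F$ at the symmetric polynomial $\varphi(z)=z^{m+1}$, whose subdominant solutions can be written in closed form as Bessel-type functions and whose associated tuple is governed by $(m{+}3)$-th roots of unity, lets one identify a single point whose fibre is a singleton, so the degree of the covering is one.

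The expected main obstacle is the properness assertion: it requires uniform WKB control of the equation $y''-\varphi y=0$ as $\varphi$ ranges over an unbounded region of coefficient space, and a careful check that every direction of divergence triggers exactly one of the degeneracies (D1) or (D2). This uniform asymptotic analysis is the technical heart of the Sibuya--Bakken theorem, and one would either invoke their results wholesale or reproduce the asymptotic expansions in a sector-by-sector manner.
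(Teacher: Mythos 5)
This theorem is not proved in the paper at all: it is stated as a consequence of the cited results of Sibuya and Bakken, with the remark that the surjectivity rests on Nevanlinna's work. So the comparison is really with those sources. Your treatment of the local biholomorphism is broadly in the spirit of Bakken's argument (he does analyse the Jacobian of the map from the coefficients $a_j$ to the Stokes data, and the derivative of the connection coefficients with respect to $a_j$ is indeed expressed through integrals of products of subdominant solutions), although as you acknowledge the non-vanishing of that Jacobian is the whole content and is left as a sketch.

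The surjectivity argument, however, is not repairable as written: the map $F$ is \emph{not} proper, so it cannot be a finite covering and the degree-one argument at $\varphi(z)=z^{m+1}$ never gets off the ground. The source $\Proj^*(\bS,\bM)\isom\bC^m$ is contractible, while for $m\geq 1$ the target $\cX^*(\bS,\bM)$ has non-trivial fundamental group (already for $m=1$, the disc with four boundary marked points, $\cX^*(\bS,\bM)$ is essentially $\bC^*$ and $F$ is an exponential-type map $\bC\to\bC^*$ with infinite fibres); a proper local biholomorphism onto a connected Hausdorff target would be a finite covering, forcing $\pi_1$ of the target to be finite — a contradiction. Concretely, your dichotomy for divergent sequences $\varphi_n\to\infty$ fails: the Stokes directions are fixed by the leading term $z^{m+1}$ and never coalesce, and along (for instance) eigenvalue sequences of the associated oscillator the framed monodromy data converges to perfectly non-degenerate points, so divergence in coefficient space does not force degeneracy of type (D1) or (D2). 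The actual proof of surjectivity (Sibuya, via Nevanlinna) is an existence construction: given a prescribed non-degenerate tuple $(\ell_1,\dots,\ell_{m+3})$, one builds a simply connected Riemann surface spread over $\bP^1$ with logarithmic branch points over the $\ell_i$ arranged according to the cyclic order, invokes Nevanlinna's theorem to conclude that this surface is parabolic, and checks that the Schwarzian derivative of the resulting uniformizing map $\bC\to\bP^1$ is a polynomial of the required degree. Some such direct construction is unavoidable here; no properness or degree argument can substitute for it.
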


Sibuya's proof of the surjectivity relies on results of Nevanlinna \cite{Nev}. Note that in these cases the cluster variety $\cX_{cl}(\bS,\bM)$ coincides with the space of non-degenerate framed local systems $\cX^*(\bS,\bM)$.  More details on these examples can be found in \cite{A}.

\subsubsection{Work of  Iwasaki and Luo}
Suppose that   $(\bS,\bM)$ is a marked bordered surface such that $\bS$ is closed. The marked points $\bM$ are then all punctures, and the space $\Proj(\bS,\bM)$ parameterises meromorphic projective structures with regular singularities.
The character stack $\cX(\bS^*)$ is defined exactly as in Section \ref{sni}, simply replacing the compact surface $\bS$ with the punctured surface $\bS^*$.  Rather than considering the cover \eqref{coverproj1}, one can also consider the monodromy map
\[F\colon \Proj(\bS,\bM)\to \cX(\bS^*),\]
which sends a projective structure to its monodromy representation.  This map  has been considered by Iwasaki \cite{Iwasaki1, Iwasaki2} and Luo \cite{FL} among other authors. Luo proved that $F$
is a local biholomorhism near any projective structure whose puncture-monodromies are non-parabolic, and whose monodromy representation is a smooth point of the stack $\cX(\bS^*)$. Iwasaki considered   projective structures with fixed numbers of apparent singularities and also obtained a local biholomorphism result.

\subsubsection{Wild character variety and the irregular Riemann-Hilbert map}Given an arbitrary connected reductive group $G$, Boalch \cite{Boalch2} introduced a wild character variety which is the natural receptacle for the generalised monodromy of $G$ connections with irregular singularities on a compact Riemann surface. The case of ramified leading order terms is treated in  \cite{BY}. Our  space of framed local systems is presumably closely related to the twisted character variety for the group $G=\PGL_2(\bC)$, although it seems not to be exactly the same, and we do not have a precise understanding of the relationship between these two spaces. Irregular character varieties have also been considered in the context of cluster algebras in \cite{CMR}.

In any case, the map $F$ of Theorem \ref{one} should certainly not be confused with the irregular Riemann-Hilbert map, sending a meromorphic connection to its monodromy, even if the targets of these maps are closely related. We deal not with arbitrary flat connections but with opers: in other words, with differential equations rather than differential systems. On the other hand, we allow the complex structure on our Riemann surface $S$ to vary. In contrast, the Riemann-Hilbert correspondence deals with arbitrary flat connections, but on a fixed Riemann surface~$S$. These two differences play in opposite directions, with the result that the appropriate space of flat $\PGL_2(\bC)$ connections on a fixed Riemann surface $S$ has the same dimension as our space of projective structures on the fixed topological surface $\bS$.

\subsection{Further directions}

There are several interesting open questions concerning the map  of Theorem \ref{one}. There are also some interesting lines of further research, some of which we plan to return to in future publications.

\subsubsection{Analogues of Hejhal and  Gallo-Kapovich-Marden Theorems}

It is natural to conjecture that the analogue of Hejhal's Theorem holds, and that the map $F$ of Theorem \ref{one} is always a 
local biholomorphism.
As noted  above, this is known in the case when $(\bS,\bM)$ is an unpunctured disc. In the case when  $\bS$ is a closed surface, the works of Iwasaki \cite{Iwasaki1} and  Luo \cite{FL}   mentioned above also give some supporting evidence.

Another natural question is whether  an analogue  of the Gallo-Kapovich-Marden result exists in the context of Theorem \ref{one}. Note that in the case of the unpunctured disc case the image of~$F$ is precisely the subset $\cX^*(\bS,\bM)$. Is it possible that this holds for an arbitrary unpunctured surface? The results of~\cite{BMM} could be relevant here. In the presence of punctures one probably should not expect a nice characterisation of the image of~$F$, since  in this case we have excluded projective structures with apparent singularities from the domain of the map. It could  be  interesting however to try to understand the behaviour of the map at these points: for example, does $F$ extend holomorphically to the blow-up of $\cP(\bS,\bM)$ along the codimension two locus of projective structures with apparent singularities?

\subsubsection{Exact WKB analysis and Voros symbols}

The space of meromorphic projective structures on a fixed Riemann surface $S$ is an affine space for the vector space of meromorphic quadratic differentials on $S$. Consider a family of projective structures of the form \[\cP(\hbar)=\cP+\hbar^{-2}\cdot  \phi,\] with $\cP$ a base meromorphic projective structure, $\phi$ a meromorphic quadratic differential, and $\hbar \in \bC^*$. Taking a local chart $z$ in the projective structure $\cP$ we can write $\phi(z)=\varphi(z)\, dz^{\tensor 2}$, and the local charts for $\cP(\hbar)$  are then given by ratios of solutions to the equation
\begin{equation}
\label{hschrod2}
\hbar^2\, y''(z)-\varphi(z) \cdot y(z)=0.\end{equation}
Let us assume that in the terminology of \cite{BS} the differential $\phi$ is complete and saddle-free. This means that it has no simple poles and no finite-length horizontal trajectories. As explained in \cite{BS,GMN2}, the trajectory structure then determines a  triangulation $T(\phi)$ of the associated marked bordered surface $(\bS,\bM)$.

The behaviour of solutions to  equations of the form \eqref{hschrod2} for small values of $\hbar$ is the subject of WKB analysis and has been much-studied since the advent of quantum mechanics. 
An important concept in the modern theory of WKB analysis is the notion of a Voros symbol \cite{IwakiNakanishi,DDP93}. The Voros symbols are initially defined as formal sums in $\hbar$, but under the above saddle-free condition, their Borel sums  define analytic functions for  small and positive values of $\hbar$.  This crucial result follows from the forthcoming work \cite{KoSc}. The resulting analytic Voros symbols have a very natural interpretation in terms of our monodromy map $F$:  they are the Fock-Goncharov co-ordinates with respect to the  triangulation $T(\phi)$ of the image of the projective structure $\cP(\hbar)$ under the monodromy map $F$. (To be  more precise, one should choose a signing of the quadratic differential $\phi$ as in \cite{BS} and take Fock-Goncharov co-ordinates with respect to the corresponding signed or tagged version of the triangulation $T(\phi)$.) This insight is originally due,  in a slightly different context, to Gaiotto, Moore, and Neitzke \cite{GMN2}. Details will appear in an accompanying paper \cite{A2}.

\subsubsection{Families of reference projective structures}
We explained above that the forgetful map  \[p\colon \cP(\bS)\to \cT(\bS)\] is an affine bundle for the bundle of quadratic differentials 
\[q\colon \cQ(\bS)\to \cT(\bS)\]over Teichm{\"u}ller space. In fact, since Teichm{\"u}ller space is Stein, this affine bundle is necessarily trivial, and therefore  has holomorphic sections. We refer to such a holomorphic section as a family of reference projective structures. Once such a family has been chosen, one obtains a (completely non-canonical) identification
$\cP(\bS)\isom \cQ(\bS)$, and composing with the monodromy map gives a local biholomorphism
\[H\colon \cQ(\bS)\to \cX^*(\bS).\]
There are several natural choices of reference projective structures. Kawai \cite{Kaw} proved that for some of these the resulting map $H$ respects the natural Poisson structures on both sides. 

One can play the same games in the meromorphic case using the forgetful map which sends a point of the space $\cP(\bS,\bM)$ to the underlying marked Riemann surface together with the positions of the poles. This defines an affine bundle over the Teichm{\"u}ller space $\cT(g,d)$ of  marked Riemann surfaces of genus $g$ equipped with a collection of $d$ marked points. Once again, this bundle is trivial, so one can choose families of reference projective structures and obtain (again, totally non-canonical) identifications between $\cP(\bS,\bM)$ and a space $\cQ(\bS,\bM)$ parameterising marked Riemann surfaces equipped with meromorphic quadratic differentials. If the reference family is chosen appropriately, this identification lifts to structures equipped with signings, and  we obtain a holomorphic map
\begin{equation}
\label{bog}H\colon \cQ^\pm (\bS,\bM)\to \cX^*(\bS,\bM).\end{equation}
One can ask whether there are natural choices of reference projective structures for which the resulting map $H$ preserves the natural Poisson structures. The case of regular singularities, corresponding to a closed surface $\bS$, has been studied in \cite{Kor} extending results of \cite{BKN}.

\subsubsection{Stability conditions and cluster varieties}
Our interest in this subject arose from a more general  project involving spaces of stability conditions and cluster varieties. We will return to this topic in a future paper~\cite{A3}; the case of an unpunctured disc is treated in \cite{A}. Associated to a nondegenerate quiver with potential are two complex manifolds of the same dimension. The first is the space of  stability conditions~\cite{B}, and the second is the cluster Poisson variety. The structure of each space is controlled by the combinatorics of the exchange graph of the quiver, but the combinatorics is used quite differently in the two cases. Indeed, the space of stability conditions has a wall-and-chamber decomposition, whereas the cluster variety is defined as a union of algebraic tori glued by birational maps.

Ideas from theoretical physics, and in particular the work of Gaiotto, Moore, and Neitzke \cite{GMN1,GMN2}, suggest that these two spaces should be related in a highly non-trivial way, involving Donaldson-Thomas invariants and the Kontsevich-Soibelman wall-crossing formula. One interesting class of examples of quivers with potential arises from marked bordered surfaces $(\bS,\bM)$. Moreover, the space of stability conditions in these cases was shown in \cite{BS} to coincide with a moduli space $\operatorname{Quad}_{\heartsuit}(\bS,\bM)$ of Riemann surfaces equipped with meromorphic quadratic differentials. On the other hand, the cluster Poisson variety is closely related to the space $\cX(\bS,\bM)$ of framed local systems. We expect that the map \eqref{bog} fits into a much more general story relating stability spaces to cluster varieties, and that in general these maps can be constructed by solving the Riemann-Hilbert problems of \cite{RHDT}.

\subsection*{Acknowledgements. } We are very grateful  to Kohei Iwaki,  Dima Korotkin, Davide Masoero, and Tom Sutherland for useful conversations and correspondence. We are particularly grateful to Misha~Kapovich who pointed us to the reference \cite{Falt} which enabled us to extend our results to the case of projective structures with only regular singularities.

\section{Projective structures and opers}
\label{proj}

In this section we recall some basic definitions concerning projective structures on Riemann surfaces. This is all well-known material, which can be found for example in \cite{Dumas, Hub}. We also recall the relation with opers for the group $\PGL_2(\bC)$. This is less well-covered in the literature, but brief treatments can be found in \cite{BD,Fr} for example.

\subsection{Projective structures}
\label{sta}
We start by defining the notion of a projective structure. 

\begin{defn}Let $S$ be a Riemann surface.
\begin{itemize}\item[(i)] A \emph{chart} on $S$ is a biholomorphism $\phi\colon U\to V$ where $U\subset S$ and $V\subset \bP^1$ are non-empty open subsets.

\item[(ii)]Two charts $\phi_i\colon U_i\to V_i$  on $S$ are said to be \emph{projectively compatible} if 
\[\text{there exists } \mat{a}{b}{c}{d}\in \PGL_2(\bC) \text{ such that }\phi_2(x)=\frac{a\phi_1(x)+b}{c\phi_1(x)+d} \text{ for all } x\in U_1\cap U_2.\]
\item[(iii)]A \emph{projective atlas}  is a collection of projectively compatible charts whose domains cover $S$.
\item[(iv)] A \emph{projective structure} on $S$ is a maximal projective atlas.
\end{itemize}
\end{defn}

Given a projective structure $\cP$ on a Riemann surface $S$ we can consider the presheaf whose sections over an open subset $U\subset S$ is  the set of  charts of $\cP$  defined on $U$. The associated sheaf $\cG$ is a locally constant sheaf of $G$-torsors, which we refer to as the \emph{monodromy local system} of $\cP$. This gives rise to a \emph{monodromy representation}
\begin{equation}
\label{rep}\rho\colon \pi_1(S)\to G,\end{equation}
well-defined up to overall conjugation by an element of $G$, in the usual way. More concretely, one can  show (see, for example \cite[Lemma 1]{Hub}), that $\cP$ has a \emph{developing map}: a holomorphic map \begin{equation}
\label{devv}f\colon \tilde{S}\to \bP^1,\end{equation} where $\pi\colon \tilde{S}\to S$ is the universal covering surface, such that  any injective locally-defined map of the form $f\circ \pi^{-1}$ is a chart in $\cP$. Such a  developing map $f$  is unique up to post-composition with an element of $G$. The representation \eqref{rep} can then be defined by the  condition $f(\gamma\cdot x)=\rho(\gamma)\cdot  f(x)$ and once again is well-defined up to overall conjugation by an element of $G$.

\begin{example}
Let $S$ be a Riemann surface with universal cover $\pi\colon \tilde{S}\to S$. The uniformization theorem states that $\tilde{S}$ is biholomorphic to either the disc, the complex plane, or the Riemann sphere, and hence can be identified with an open subset $U\subset \bP^1$.  Moreover, the induced action of $\pi_1(S)$ by biholomorphisms on $\tilde{S}\isom U$ is then the restriction of an action on $\bP^1$. This gives rise to a projective structure on $S$: the charts are obtained by taking locally-defined inverses to $\pi$ and composing with the identification $\tilde{S}\isom U\subset \bP^1$.
\end{example}

\subsection{Affine space structure}

The following well-known result will be of fundamental importance in what follows.

\begin{prop}
\label{fun}The set of projective structures on a Riemann surface $S$ is an affine space for the vector space $H^0(S,\omega_S^{\tensor 2})$ of global quadratic differentials. 
\end{prop}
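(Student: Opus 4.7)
The plan is to use the Schwarzian derivative
\[
S(f) = \left(\frac{f''}{f'}\right)' - \frac{1}{2}\left(\frac{f''}{f'}\right)^2,
\]
which is the fundamental invariant measuring how far a locally biholomorphic map to $\bP^1$ is from being projective. Two standard facts will drive the argument: (i) $S(f)\equiv 0$ if and only if $f$ is the restriction of an element of $G = \PGL_2(\bC)$; and (ii) the cocycle identity $S(g\circ f) = (S(g)\circ f)\cdot (f')^2 + S(f)$. Together, (i) and (ii) imply that the Schwarzian of a transition function transforms tensorially under changes of chart within a fixed projective structure, so it defines a well-posed quadratic differential.

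To construct the difference $\cP_2 - \cP_1$ of two projective structures, I would work locally: near each point of $S$ pick charts $f_i\colon U \to \bP^1$ in $\cP_i$ and consider
\[
\phi_U := S\bigl(f_2\circ f_1^{-1}\bigr)(w)\, dw^{\tensor 2},
\]
pulled back to $U$ along $f_1$. Using (ii) and the vanishing of $S$ on elements of $G$, one checks that $\phi_U$ does not depend on the choices of $f_1, f_2$ within $\cP_1, \cP_2$. These local pieces glue to a global element $\cP_2 - \cP_1 \in H^0(S, \omega_S^{\tensor 2})$, and a further application of (ii) yields the additivity $(\cP_3 - \cP_2) + (\cP_2 - \cP_1) = \cP_3 - \cP_1$.

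For the torsor action, given $\cP_1$ and $\phi \in H^0(S,\omega_S^{\tensor 2})$ I would construct $\cP_2 = \cP_1 + \phi$ locally. In a chart $z\colon U\to \bP^1$ of $\cP_1$, write $\phi = \varphi(z)\, dz^{\tensor 2}$ and take two linearly independent solutions $y_1, y_2$ of \eqref{schrod}. The constant, non-zero Wronskian makes $w = y_1/y_2$ locally biholomorphic, and a direct calculation shows that $S(w)$ is proportional to $\varphi$. Different bases of solutions change $w$ by a M\"obius transformation, so $w$ is well-defined modulo $G$. The main obstacle, and the crux of the proof, is establishing projective compatibility across overlaps of two $\cP_1$-charts $z$ and $z' = \mu(z)$ with $\mu\in G$. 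This follows from the transformation rule for \eqref{schrod}: solutions pull back by $\tilde y(z') = (\mu'(z))^{-1/2}\, y(z)$, the prefactor cancels in the ratio, and so the chart $\tilde w = \tilde y_1/\tilde y_2$ differs from $w \circ \mu^{-1}$ only by an element of $G$. Assembling the local charts $w$ into a projective atlas therefore yields a bona fide projective structure $\cP_2$, and unwinding the construction confirms $\cP_2 - \cP_1 = \phi$, completing the proof.
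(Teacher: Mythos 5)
Your proof follows essentially the same route as the paper, which sketches exactly these two constructions (the Schwarzian derivative for the difference of two projective structures, and ratios of solutions of \eqref{schrod} for the action of a quadratic differential) and defers the details to Hubbard. The only point to watch is normalization: since $S(y_1/y_2)=-2\varphi$ for solutions of $y''-\varphi y=0$, the difference $\cP_2-\cP_1$ should be defined as $-\tfrac{1}{2}\,S(f_2\circ f_1^{-1})\,dw^{\tensor 2}$ (as in \eqref{schwarz}) rather than $S(f_2\circ f_1^{-1})\,dw^{\tensor 2}$ if the two constructions are to be mutually inverse, though this constant does not affect the existence of the affine space structure.
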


Thus given a projective structure $\cP_1$ and a quadratic differential $\phi\in H^0(S,\omega_S^{\tensor 2})$ there is a well-defined projective structure $\cP_2=\cP_1+ \phi$. Conversely, given two projective structures $\cP_1$ and $\cP_2$ there is a well-defined difference $\phi=\cP_2-\cP_1\in H^0(S,\omega_S^{\tensor 2})$. We briefly explain these constructions: for full details see \cite[\S 2]{Hub}.

\begin{itemize}
\item[(i)]Suppose given a projective structure $\cP_1$ and a quadratic differential $\phi\in H^0(S,\omega_S^{\tensor 2})$. Take a local co-ordinate $z\colon U\to \bC$ in the projective structure $\cP_1$ and write $\phi$ in the form $\varphi(z) dz^{\tensor 2}$. Then local charts $w\colon U\to \bP^1$ in the projective structure $\cP_2=\cP_1+ \phi$ are obtained by taking ratios $w(z)=y_1(z)/y_2(z)$ of two linearly independent  solutions of the differential equation 
\begin{equation}
\label{cat}
y''(z)-\varphi(z)\cdot y(z)=0,
\end{equation}
where primes denote differentiation with respect to $z$.

\item[(ii)]Conversely, given two projective structures $\cP_1$ and $\cP_2$, we can take local co-ordinates $z_i\colon U_i\to \bC$ with overlapping domains, and write $z_2=f(z_1)$ on the overlap. Then the quadratic differential $\phi=\cP_2-\cP_1$ is given on the overlap by the Schwarzian derivative
\begin{equation}
\label{schwarz}\phi=-\frac{1}{2}\cdot \Bigg[\bigg(\frac {f''}{f'}\bigg)'-\frac{1}{2}\bigg(\frac{f''}{f'}\bigg)^2 \Bigg]dz_1^{\tensor 2}.\end{equation}
It follows from basic properties of the Schwarzian derivative  that these expressions glue to give a global holomorphic differential on $S$.
\end{itemize}

\subsection{Opers for the groups $\GL_2(\bC)$ and $\PGL_2(\bC)$}
\label{pot}
The language of opers provides a useful alternative perspective on projective structures.  A \emph{$\GL_2(\bC)$ oper} on a  Riemann surface $S$ is defined to be triple $(E,L,\nabla)$, where $E$ is a rank two holomorphic vector bundle,
\[\nabla\colon E\to E\tensor \omega_S\]
is a flat holomorphic connection, and 
$L\subset E$ is a  line sub-bundle, such that
 the  $\O_S$-linear map $\theta$ defined as the composite\begin{equation}
 \label{theta} \theta\colon L\lRa{i} E \lRa{\nabla} E\tensor \omega_S \lRa{q\tensor \id_{\omega_S}} (E/L)\tensor \omega_S\end{equation}  is an isomorphism. Here $i\colon L\to E$ and $q\colon E\to E/L$ are the inclusion and quotient maps. 
It follows that $E$ fits into a short exact sequence
\[0\lra L\lra E\lra L\tensor \omega_S^*\lra 0.\]

Suppose given two $\GL_2(\bC)$ opers $(E_i,L_i,\nabla_i)$ on the same Riemann surface $S$. We consider them to be equivalent if there exists a line bundle with flat connection $(M,\nabla_{M})$ and an isomorphism
\[( E_1,L_1,\nabla_1)\isom ( E_2,L_2,\nabla_2)\tensor(M,\nabla_M),\]
in the sense that there is an isomorphism of vector bundles $ E_1 \to E_2\tensor M$ which takes the sub-bundle $L_1$ to the sub-bundle $L_2\tensor M$, and the connection $\nabla_1$ to the connection $\nabla_2\tensor 1_M + 1_{E_2}\tensor \nabla_M$. 
We define a \emph{$\PGL_2(\bC)$ oper}  to be an equivalence class of  of $\GL_2(\bC)$ opers under this relation.

\subsection{Opers and projective structures}
\label{plo}
There is a well-known correspondence between $\PGL_2(\bC)$ opers and projective structures. We briefly recall the relevant constructions here. 

\begin{itemize}
\item[(i)] Let $(E,L,\nabla)$ be a $\GL_2(\bC)$ oper on $S$. The projective bundle \[\pi\colon \bP(E)\to S\]
associated to $E$ is a $\bP^1$ bundle. The sub-bundle $L$ determines a section $\sigma(L)$ of this bundle. Locally, over an open subset $U\subset S$, one may choose a basis of flat sections of $\nabla$, and hence obtain an identification
\begin{equation}
\label{guff}\pi^{-1}(U) \isom \bP^1\times U,\end{equation}
with the trivial $\bP^1$ bundle over $U$. Under this identification the section $\sigma(L)$ becomes a holomorphic map $f\colon U\to \bP^1$, and it is easy to see that the oper condition implies that the derivative of $f$ is nowhere vanishing. Since the identification \eqref{guff} is uniquely defined up to the action of  $G$ on $\bP^1$, it follows that the charts $f$ obtained in this way glue  to give a projective structure on $S$.

\smallskip
\item[(ii)] Consider $\bP^1$ equipped with the standard action of $\GL_2(\bC)$. There is a  short exact sequence of vector bundles
\begin{equation}
\label{last}0\lra \O(-1)\lra \O^{\oplus 2}\lra \O(1)\lra 0.\end{equation}
The important point is that this is naturally an exact sequence of $\GL_2(\bC)$ equivariant vector bundles, where the action on the rank two trivial bundle is via the defining representation. One way to see this is to note that under the standard correspondence between graded modules for the polynomial ring and coherent sheaves on projective space, the sequence \eqref{last} is induced by the Koszul resolution.

Let us equip the trivial bundle in \eqref{last} with the canonical flat connection $d$. Then the triple \begin{equation}
\label{oper}(\O^{\oplus 2},\O(-1),d)\end{equation} is a $\GL_2(\bC)$ oper on $\bP^1$, which is equivariant with respect to the action of $\GL_2(\bC)$. Given a projective structure on a Riemann surface $S$, the monodromy representation \eqref{rep} can be lifted \cite{GKM} to a homomorphism \[\tilde{\rho}\colon \pi_1(S)\to \GL_2(\bC).\] This follows because of the identification with the monodromy action on solutions to the equation \eqref{cat}. Pulling back the oper \eqref{oper} on $\bP^1$ via the developing map \eqref{devv} then gives a $\GL_2(\bC)$ oper on the universal cover $\tilde{S}$ which is equivariant under the action of the fundamental group $\pi_1(S)$. This then descends to give a $\GL_2(\bC)$ oper on $S$.
\end{itemize}

It is not hard to check that the above correspondences define mutually-inverse bijections between projective structures and $\PGL_2(\bC)$ opers. Details can be found in \cite[Section 4.1]{Fr}.

\subsection{Affine structure of opers}
\label{ao}

Given a $\GL_2(\bC)$ oper $(E,L,\nabla)$ there is a linear map
\[\alpha\colon \Hom_S(\O_S,\omega_S^{\tensor 2}) \to  \Hom_S(E,E\tensor \omega_S),\]
obtained by sending a quadratic differential $\phi$ to the composite
\[\alpha(\phi): E\lRa{q} E/L \lRa{\theta^{-1}\tensor \id_{\omega_S^{-1}}} L\tensor \omega_S^{-1}\lRa{\id_{L\tensor \omega_S^{-1}}\tensor \phi}  L\tensor \omega_S \lRa{i\tensor \id_{\omega_S}} E\tensor \omega_S\]
where as before $i\colon L\to E$ and $q\colon E\to E/L$ are the inclusion and quotient maps. This leads to a linear action of the vector space $H^0(S,\omega_S^{\tensor 2})$ on the set of opers, in which a quadratic differential $\phi$ acts by
\[\phi\colon (E,L,\nabla)\mapsto (E,L,\nabla+\alpha(\phi)).\]
This action preserves the equivalence relation of Section \ref{pot} and hence descends to an action on the set of $\PGL_2(\bC)$ opers.

\begin{lemma}
\label{plob}
The bijection between opers and projective structures commutes with the  action of quadratic differentials on both sides. In particular  the set  of $\PGL_2(\bC)$ opers on $S$ becomes an affine space for the vector space $H^0(S,\omega_S^{\tensor 2})$ under the above action.
\end{lemma}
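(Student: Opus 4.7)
The plan is to verify the statement locally in a chart of the given projective structure, since both the correspondence between opers and projective structures, and the respective actions of a quadratic differential, are defined locally. Let $\cP_1$ be a projective structure with corresponding $\PGL_2(\bC)$ oper $(E,L,\nabla)$, and let $\phi\in H^0(S,\omega_S^{\tensor 2})$ with local expression $\phi=\varphi(z)\,dz^{\tensor 2}$ in a chart $z\colon U\to\bC$ of $\cP_1$. The first step is to write the oper on $U$ in a convenient standard form: using a local flat trivialization $E\isom\O_U^{\oplus 2}$ with $\nabla=d$, choose $L$ to be the sub-bundle spanned by $(z,1)^T$. One checks immediately that the procedure of \S\ref{plo}(i) recovers the chart $f(z)=z$ from this data, and that $\theta$ sends this generator of $L$ to the class of $(1,0)^T\otimes dz$ in $(E/L)\tensor\omega_S$.

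The second step is to compute the matrix of $\alpha(\phi)$ in this trivialization by unwinding the definition in \S\ref{ao}: one projects to $E/L$, applies $\theta^{-1}$, multiplies by $\phi$, and re-injects into $E$. This is a short calculation yielding an explicit connection matrix $M(z)\,dz$ with entries polynomial in $z$ and $\varphi(z)$. The new $\GL_2$-oper is then $(\O_U^{\oplus 2},L,d+M\,dz)$, and the corresponding projective structure, by \S\ref{plo}(i), is read off by expressing the generator $(z,1)^T$ of $L$ in a flat frame of the new connection.

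The third step is to identify these flat sections. A direct computation shows that the first-order system $s'=-M s$ admits flat sections of the form
\[s(z)=\bigl(w(z)-z\,w'(z),\,-w'(z)\bigr)^T,\]
where $w(z)$ is any solution of the scalar equation $w''-\varphi\, w=0$ of \eqref{cat}. Taking two linearly independent solutions $w^{(1)},w^{(2)}$ gives a flat frame $\{s^{(1)},s^{(2)}\}$. Writing $(z,1)^T=a(z)\,s^{(1)}+b(z)\,s^{(2)}$ and solving the resulting two equations forces the relation $a\,w^{(1)}+b\,w^{(2)}=0$, so the ratio $a/b$ equals $-w^{(2)}/w^{(1)}$. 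This ratio is, up to a M\"obius transformation, precisely the local chart of $\cP_1+\phi$ given in \S2.2(i). Hence the projective structure associated to $(E,L,\nabla+\alpha(\phi))$ is $\cP_1+\phi$, proving the equivariance of the bijection.

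The final assertion of the lemma is then purely formal: Proposition~\ref{fun} provides a free transitive action of $H^0(S,\omega_S^{\tensor 2})$ on the set of projective structures, and the bijection of \S\ref{plo} transports this affine structure to the set of $\PGL_2(\bC)$ opers, which the preceding paragraph identifies with the action defined in \S\ref{ao}. The main obstacle is bookkeeping: one must fix conventions (row vs.\ column vectors, direction of $\theta^{-1}$, and the precise identifications \eqref{guff}) and carry the explicit matrix calculation through without sign errors. Once this is done, the reduction of the linear system $s'=-Ms$ to the scalar Schr\"odinger equation \eqref{cat} is the essential computation, and everything else follows immediately.
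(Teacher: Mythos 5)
Your proposal is correct and follows essentially the same route as the paper's proof: a local computation in a chart of $\cP_1$, an explicit evaluation of $\alpha(\phi)$, reduction of the flat-section equation for $\nabla+\alpha(\phi)$ to the scalar equation \eqref{cat}, and expression of the generator of $L$ in a flat frame to read off the new chart as a ratio of solutions. The only (cosmetic) difference is that you work in the flat frame of the original connection, whereas the paper uses the adapted frame $(u(z),v(z))$ with $v$ spanning $L$, in which the new connection takes the companion form \eqref{opereq} directly; the two are gauge-equivalent presentations of the same calculation.
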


\begin{proof}
Take a $\GL_2(\bC)$ oper $(E,L,\nabla)$ and a local chart $z\colon U\to \bC$ in the corresponding projective structure. By construction of this projective structure we can find flat sections $s_1(z),s_2(z)$ of $E$ over $U$ such that the sub-bundle $L\subset E$ is spanned by the section $v(z)=s_1(z)+z s_2(z)$. Let us also write $u(z)=s_2(z)$. Then we have $\nabla_{\partial/\partial z}(v(z))=u(z)$ and $\nabla_{\partial/\partial z}(u(z))=0$, so in the local basis of sections $(u(z),v(z))$ the connection takes the form
\[\nabla=d+\mat{0}{1}{0}{0} dz.\]
By mild abuse of notation we can view $v(z)$ as a section of $L$ via the inclusion $L\subset E$, and $u(z)$ as a section of $E/L$ via the projection $E\onto E/L$. The bundle map $\theta$ then sends $v(z)$ to $u(z)\tensor dz$. 

Let us now choose a quadratic differential $\phi=\varphi(z)dz^{\tensor 2}$. The map $\alpha(\phi)$ defined above kills $v(z)$ by definition, and it is easy to check that it sends $u(z)$ to $v(z)\tensor \varphi(z)dz$.  Consider the oper $(E,L,\nabla+\alpha(\phi))$.  In the same  local basis of sections $(u(z),v(z))$, the  connection takes the form
\begin{equation}\label{opereq}\nabla+\alpha(\phi)=d+\mat{0}{1}{\varphi(z)}{0} dz.\end{equation}
This is exactly the rank two system associated to the second order equation \eqref{cat}. If we write a flat section in the form \[t(z)=y(z) u(z)-w(z)v(z)\] then we find that $w(z)=y'(z)$ with $y(z)$ satisfying \eqref{cat}. If we take two linearly independent solutions $y_i(z)$ of this equation we obtain a flat basis of sections $t_i(z)$ of the bundle $E$, in terms of which the line bundle $L$ is spanned by \[v(z)=\frac{1}{W(y_1,y_2)}\left(y_2(z) t_1(z)-y_1(z) t_2(z)\right),\] where the Wronskian $W(y_1,y_2)=y_1(z)y_2'(z)-y_1'(z)y_2(z)$ is a nonzero constant. Thus the projective chart defined by the new oper is given by the ratio $-y_1(z)/y_2(z)$, which is a ratio of solutions to~\eqref{cat} as required.
\end{proof}

\section{Meromorphic projective structures}
\label{sectmero}

In this section we introduce meromorphic projective structures and their associated marked bordered surfaces. We start by recalling some basic properties of meromorphic quadratic differentials on Riemann surfaces. For more on this the reader can consult \cite{BS}.

\subsection{Meromorphic quadratic differentials} \label{Sec:Flat}

 Let $S$ be a Riemann surface, and let  $\omega_S$ denote its holomorphic cotangent bundle. A \emph{meromorphic quadratic differential}  $\phi$ on $S$ is a meromorphic  section  of the line bundle $\omega_S^{\tensor 2}$.  Two such  differentials $\phi_1,\phi_2$ on  surfaces $S_1,S_2$  are said to be    equivalent if there is a biholomorphism  $f\colon S_1\to S_2$ such that $f^*(\phi_2)=\phi_1$.

In terms of a local co-ordinate $z$ on $S$ we can write a quadratic differential $\phi$ as
 \begin{equation}\label{form2}\phi(z)=\varphi(z)\, dz^{\tensor 2}\end{equation}
 with $\varphi(z)$ a  meromorphic function. 
 We write  $\Zer(\phi),\Pol(\phi)\subset S$ for the subsets of zeroes and poles of $\phi$ respectively. The subset  \[\Crit(\phi)=\Zer(\phi)\cup\Pol(\phi)\] is the set  of \emph{critical points}  of  $\phi$.

At any point of $S \setminus \Crit(\phi)$ there is a  distinguished local co-ordinate $w$, uniquely defined up to transformations of the form $w \mapsto \pm w + \constant$, with respect to which $\phi(w)=dw^{\tensor 2}$.
 In terms of an arbitrary local co-ordinate $z$ we have
\[w=\int \sqrt{\varphi(z)}\,dz.\]
A meromorphic quadratic differential $\phi$ on a Riemann surface $S$ determines two  foliations on $S\setminus \Crit(\phi)$,   the horizontal and vertical foliations. These are given in terms of a distinguished local co-ordinate $w$ by the lines  $\Im (w)=\constant$ and $\Re(w)=\constant$ respectively.

\subsection{Trajectory structure at poles}

We shall need some data naturally associated to poles of meromorphic quadratic differentials.
Suppose first that $p\in \Pol(\phi)$ is a  pole of order $\leq 2$ and let us write $\phi$ in the form \eqref{form2} for a local co-ordinate $z$ centered at $p$. Then  we define the \emph{leading coefficient} of $\phi$ at $p$ to be
\begin{equation}
\label{lee}a(p)=\lim_{z\to 0} z^2\cdot \varphi(z).\end{equation}
 We also define 
 the \emph{residue} of $\phi$ at $p$ to be 
\[
\res(p)=\pm 4\pi i \sqrt{a(p)},\]
which is well-defined up to sign. It is easily checked that these quantities are independent of the choice of co-ordinate $z$. Note that if $\phi$ has a simple pole at $p$ then the leading coefficient and residue are both defined to be zero.

\begin{figure}[ht]
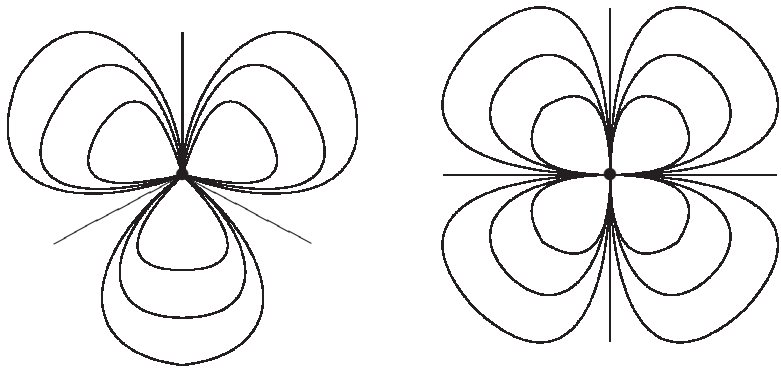

\begin{center}
\[
\xy /l3pc/:
{\xypolygon3"T"{~:{(2,0):}~>{}}},
{\xypolygon3"S"{~:{(1.5,0):}~>{}}},
{\xypolygon3"R"{~:{(1,0):}~>{}}},
(1,0)*{}="O"; 
(-0.35,0.72)*{}="U"; 
(2.35,0.72)*{}="V"; 
(1,-1.5)*{}="W"; 
"O";"U" **\dir{-}; 
"O";"V" **\dir{-}; 
"O";"W" **\dir{-}; 
"O";"T1" **\crv{(2,0.75) & (2.25,1.85)};
"O";"T1" **\crv{(0,0.75) & (-0.25,1.85)};
"O";"T2" **\crv{(0,0.4) & (-1.2,0.25)};
"O";"T2" **\crv{(1,-1) & (0,-2.2)};
"O";"T3" **\crv{(1,-1) & (2,-2.2)};
"O";"T3" **\crv{(2,0.4) & (3.2,0.25)};
"O";"S1" **\crv{(1.75,0.56) & (2,1.5)};
"O";"S1" **\crv{(0.25,0.56) & (0,1.5)};
"O";"S2" **\crv{(0,0.3) & (-0.9,0.19)};
"O";"S2" **\crv{(0.9,-0.8) & (0.3,-1.7)};
"O";"S3" **\crv{(2,0.3) & (2.9,0.19)};
"O";"S3" **\crv{(1.1,-0.8) & (1.7,-1.7)};
"O";"R1" **\crv{(1.5,0.5) & (1.75,1)};
"O";"R1" **\crv{(0.5,0.5) & (0.25,1)};
"O";"R2" **\crv{(0.5,0.1) & (-0.3,0.25)};
"O";"R2" **\crv{(0.75,-0.8) & (0.5,-1)};
"O";"R3" **\crv{(1.5,0.1) & (2.3,0.25)};
"O";"R3" **\crv{(1.25,-0.8) & (1.5,-1)};
(1,0)*{\bullet};
\endxy
\qquad
\xy /l3pc/:
{\xypolygon4"A"{~:{(2,0):}~>{}}},
{\xypolygon4"B"{~:{(1.5,0):}~>{}}},
{\xypolygon4"C"{~:{(1,0):}~>{}}},
(1,0)*{}="O"; 
(1,-1.75)*{}="T"; 
(-0.75,0)*{}="U"; 
(1,1.75)*{}="V"; 
(2.75,0)*{}="W"; 
"O";"T" **\dir{-}; 
"O";"U" **\dir{-}; 
"O";"V" **\dir{-}; 
"O";"W" **\dir{-}; 
"O";"A1" **\crv{(1,1.5) & (1.5,2.25)};
"O";"A1" **\crv{(2.5,0) & (3.25,0.5)};
"O";"A2" **\crv{(1,1.5) & (0.5,2.25)};
"O";"A2" **\crv{(-0.5,0) & (-1.25,0.5)};
"O";"A3" **\crv{(1,-1.5) & (0.5,-2.25)};
"O";"A3" **\crv{(-0.5,0) & (-1.25,-0.5)};
"O";"A4" **\crv{(1,-1.5) & (1.5,-2.25)};
"O";"A4" **\crv{(2.5,0) & (3.25,-0.5)};
"O";"B1" **\crv{(1,1) & (1.5,1.6)};
"O";"B1" **\crv{(2,0) & (2.6,0.5)};
"O";"B2" **\crv{(1,1) & (0.5,1.6)};
"O";"B2" **\crv{(0,0) & (-0.6,0.5)};
"O";"B3" **\crv{(1,-1) & (0.5,-1.6)};
"O";"B3" **\crv{(0,0) & (-0.6,-0.5)};
"O";"B4" **\crv{(1,-1) & (1.5,-1.6)};
"O";"B4" **\crv{(2,0) & (2.6,-0.5)};
"O";"C1" **\crv{(1,0.75) & (1.25,1)};
"O";"C1" **\crv{(1.75,0) & (2,0.25)};
"O";"C2" **\crv{(1,0.75) & (0.75,1)};
"O";"C2" **\crv{(0.25,0) & (0,0.25)};
"O";"C3" **\crv{(1,-0.75) & (0.75,-1)};
"O";"C3" **\crv{(0.25,0) & (0,-0.25)};
"O";"C4" **\crv{(1,-0.75) & (1.25,-1)};
"O";"C4" **\crv{(1.75,0) & (2,-0.25)};
(1,0)*{\bullet};
\endxy
\]
\end{center}
\caption{Local trajectory structures at poles of order $m=5$ and $6$\label{pole}.}
\end{figure}

Suppose now that $p\in \Pol(\phi)$ is a pole of order $m>2$ and take a local co-ordinate $z$ centered at $p$. Writing
\[\phi(z)=z^{-m}\cdot (a_0 + a_1 z + a_2 z^2 + \cdots) \cdot dz^{\tensor 2},\]
we define the \emph{asymptotic horizontal directions} of $\phi$ at $p$ to be the $m-2$ tangent rays defined by the condition $a_0 \cdot z^{2-m}\in \bR_{>0}$. Again, this is easily seen to be independent of the choice of the co-ordinate $z$. The reason for the name is that there is  a neighbourhood $p\in U\subset S$  such that any horizontal trajectory entering $U$ eventually tends to  $p$ and  becomes asymptotic to  one of the asymptotic horizontal directions: see Figure \ref{pole}. One can similarly define asymptotic vertical directions by the condition $a_0 \cdot z^{2-m}\in \bR_{<0}$.

\subsection{Meromorphic projective structures}

The affine space structure on the set of projective structures on a fixed Riemann surface allows  us to define a meromorphic projective structure as follows.

\begin{defn}\label{def} A \emph{meromorphic projective structure} $\cP$ on a Riemann surface $S$ is defined to be a projective structure $\cP^*$ on the complement $S^*= S\setminus P$ of a discrete subset $P\subset S$, such that given a holomorphic projective structure $\cP_0$ on $S$, the quadratic differential $\phi$ on $S^*$ defined as the difference  $\cP^*-\cP_0|_{S^*}$ extends to a meromorphic quadratic differential on $S$. This condition is independent of the choice of $\cP_0$.
\end{defn}

Let $\cP$ be a meromorphic projective structure on a Riemann surface $S$. Note that the meromorphic differential $\phi$ in Definition \ref{def} is uniquely determined   up to addition of holomorphic differentials. We call it a \emph{polar differential} for  $\cP$. We say that $\cP$ has a pole at a point $p\in S$ if  $\phi$ has a pole at that point, and we define the order of the pole to be the order of the pole of $\phi$. These notions are clearly independent of the choice of polar differential. We also refer to poles of order $\leq 2$ as \emph{regular singularities} of the projective structure, and to poles of order $>2$ as \emph{irregular singularities}.

\begin{lemma}
\label{triv}
The set of meromorphic projective structures on a fixed Riemann surface $S$ is an affine space for the vector space of meromorphic quadratic differentials on $S$.
\end{lemma}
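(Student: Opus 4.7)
The plan is to bootstrap directly from the holomorphic statement of \autoref{fun} together with \autoref{def}. Fix a reference holomorphic projective structure $\cP_0$ on $S$ (one exists, e.g.\ via the uniformization example given earlier). I will then define the action and verify freeness and transitivity on the complement of a finite set of potential poles, and extend the conclusion to $S$ by appealing to the meromorphicity built into \autoref{def}.

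For the action: given a meromorphic projective structure $\cP$ with pole set $P_1$ and a meromorphic quadratic differential $\psi$ with pole set $P_2$, let $S^\circ=S\setminus(P_1\cup P_2)$. On $S^\circ$, the restriction of $\cP$ is an ordinary (holomorphic) projective structure and $\psi|_{S^\circ}$ is a holomorphic quadratic differential, so \autoref{fun} produces a projective structure $\cP|_{S^\circ}+\psi|_{S^\circ}$ on $S^\circ$. To see this defines a meromorphic projective structure on $S$, observe
\[
\bigl(\cP|_{S^\circ}+\psi|_{S^\circ}\bigr)-\cP_0|_{S^\circ}=\bigl(\cP|_{S^\circ}-\cP_0|_{S^\circ}\bigr)+\psi|_{S^\circ},
\]
and both summands extend to meromorphic quadratic differentials on $S$: the first by \autoref{def} applied to $\cP$, the second by hypothesis on $\psi$. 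Thus the sum extends meromorphically, so $\cP+\psi$ satisfies \autoref{def} with pole set contained in $P_1\cup P_2$.

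For freeness and transitivity: given two meromorphic projective structures $\cP_1,\cP_2$ with pole sets $P_1,P_2$, set $S^\circ=S\setminus(P_1\cup P_2)$ and apply \autoref{fun} on $S^\circ$ to obtain a unique holomorphic quadratic differential $\psi$ on $S^\circ$ with $\cP_2|_{S^\circ}=\cP_1|_{S^\circ}+\psi$. Writing $\psi=(\cP_2|_{S^\circ}-\cP_0|_{S^\circ})-(\cP_1|_{S^\circ}-\cP_0|_{S^\circ})$ exhibits $\psi$ as the difference of the restrictions to $S^\circ$ of two meromorphic quadratic differentials on $S$, hence $\psi$ itself extends to a meromorphic quadratic differential on $S$. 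Uniqueness of $\psi$ on $S$ follows from uniqueness of its restriction to the dense open $S^\circ$ furnished by \autoref{fun}.

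I do not foresee a substantive obstacle; the lemma is essentially a formal consequence of \autoref{fun} and \autoref{def}. The only place requiring genuine care is the bookkeeping of pole sets — in particular, checking that the provisional pole set $P_1\cup P_2$ used in defining $\cP+\psi$ really does produce something covered by \autoref{def}, and that the answer is independent of the auxiliary choice of $\cP_0$ (which is immediate because any two choices of $\cP_0$ differ by a holomorphic, hence meromorphic, quadratic differential).
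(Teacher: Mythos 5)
Your proof is correct. The paper argues differently in form, though the mathematical content is the same: it first sets up a purely formal ``base change'' construction for affine spaces --- given an affine space $\bA$ for $V$ and an inclusion $V\subset W$, it forms $\bA\oplus_V W:=(\bA\times W)/V$ with $v$ acting by $(p,w)\mapsto(p+v,w-v)$ --- applies this to the inclusion of holomorphic quadratic differentials into meromorphic ones acting on the affine space of \autoref{fun}, and then asserts that the points of the resulting affine space are in bijection with the set of meromorphic projective structures of \autoref{def}. What you do instead is verify the torsor axioms directly: you define $\cP+\psi$ on the common domain of holomorphy via \autoref{fun}, check meromorphicity of the result by decomposing against a reference $\cP_0$, and establish existence and uniqueness of differences by the same decomposition plus the identity principle on the dense open set $S^\circ$. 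In effect, your argument supplies exactly the verification hiding behind the paper's ``it is easy to see'': the paper's approach buys brevity and makes the affine structure manifest by construction, while yours makes explicit the pole-set bookkeeping and the fact that the sum and difference operations are well defined independently of the auxiliary choices. Both reduce, in the end, to the same two inputs, \autoref{fun} and the meromorphic-extension clause of \autoref{def}.
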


\begin{proof}
First consider the following  general construction. Suppose $\bA$ is an affine space for a vector space $V$, and suppose $V\subset W$ is a subspace. Then we can define an affine space for $W$ by setting
\[\bA\oplus_V W:=(\bA\times W)/V,\]
where $v\in V$ acts on $(p,w)\in \bA\times W$ by mapping it to $(p+v,w-v)$. Applying this construction  to the inclusion of holomorphic quadratic differentials inside meromorphic ones, and the affine space of holomorphic projective structures, it is easy to see that the points of the resulting affine space are in bijection with the set of meromorphic projective structures defined above.
\end{proof}

\subsection{Meromorphic opers}
\label{meroper}
Let $D$ be an effective divisor on a Riemann surface $S$. Recall that a meromorphic connection  on a bundle $E$ with polar divisor $D$ is a $\bC$-linear map of sheaves
\[\nabla\colon E\to E\tensor \omega_S(D)\]
which satisfies the Leibniz rule: $\nabla(fs)=s\tensor df + \nabla(s)\tensor f$,  where we interpret $df$ as a section of $\omega_S(D)$ via the canonical inclusion $\omega_S\to \omega_S(D)$. Of course, a holomorphic connection on $E$ induces a meromorphic one by composing with the canonical inclusion $E\tensor\omega_S\to E\tensor \omega_S(D)$.

We will not develop a detailed theory of meromorphic opers here. However it will be important for us to know that a meromorphic projective structure $\cP$ has an associated meromorphic connection $(E,\nabla)$. To construct it, let us write $\cP=\cP_0+ \phi$, with $\cP_0$ a holomorphic projective structure, and
\[\phi\in H^0(S,\omega_S^{\tensor 2}(D)),\]
a quadratic differential with polar divisor $D$. Then, as explained in Section \ref{plob}, the holomorphic projective structure  $\cP_0$ has an associated $\GL(2,\bC)$ oper $(E,L,\nabla_0)$, well-defined up to  tensoring with  line bundles with flat connection $(M,\nabla_M)$ as in Section \ref{pot}. Exactly as in Section \ref{ao}, we can form the linear map\[\alpha(\phi): E\lRa{q} E/L \lRa{\theta^{-1}\tensor \id_{\omega_S^{-1}}} L\tensor \omega_S^{-1}\lRa{\id_{L\tensor \omega_S^{-1}}\tensor \phi}  L\tensor \omega_S(D) \lRa{i\tensor \id_{\omega_S(D)}} E\tensor \omega_S(D),\]
and hence define a flat meromorphic connection $\nabla=\nabla_0+\alpha(\phi)$ on $E$. It is easy to check using Lemma \ref{plob} that the resulting connection is well-defined up to tensoring with  line bundles with flat connection $(M,\nabla_M)$ as in Section \ref{pot}, and is independent of the choice of base holomorphic projective structure $\cP_0$.

\begin{remarks}
\label{snobl}
\begin{itemize}\item[(i)]
Note that 
 the  $\O_S$-linear map defined as the composite\[
L\lRa{i} E \lRa{\nabla} E\tensor \omega_S(D) \lRa{q\tensor \id_{\omega_S(D)}} (E/L)\tensor \omega_S(D)\] is the map $\theta$ of \eqref{theta} for the oper $(E,L,\nabla_0)$, composed with the embedding \[(E/L)\tensor \omega_S\to (E/L)\tensor \omega_S(D).\] It is therefore an isomorphism away from the poles of $\cP$.
\item[(ii)]
The same argument as Lemma \ref{plob} shows that in a local basis of flat sections of the connection $\nabla_0$ the connection $\nabla$ is given  by
\[
\nabla=d+\mat{0}{1}{\varphi(z)}{0}dz.\]
Of course, this coincides with the rank two differential system associated to the second order equation \eqref{cat}. \end{itemize}
\end{remarks}

\subsection{Regular singularities}
\label{sig}

Suppose that $S$ is a Riemann surface and $\cP$ is a meromorphic projective structure with a regular singularity at a point $p\in S$. We  define the \emph{leading coefficient} $a(p)$ of the projective structure $\cP$ at the point $p$ to be the leading coefficient \eqref{lee} of a corresponding polar  differential.  We also introduce a quantity 
\begin{equation}
\label{resnot}r(p)=\pm 2\pi i \sqrt{1+4a(p)},\end{equation}
well-defined up to sign, which we call the \emph{exponent} of $\cP$  at the regular singularity $p$. 

The meromorphic projective structure $\cP$ restricts to a holomorphic projective structure on a punctured neighbourhood of $p\in S$ and the corresponding monodromy local system determines a well-defined conjugacy class of elements in $G$ which we call the \emph{monodromy around the regular singularity $p$}. We will see in Section \ref{locale} that this monodromy  is controlled by the exponent $r(p)$.

\begin{example}
\label{egg}
There is a holomorphic projective structure $\cP$ on $\bC^*$ whose charts are given by branches of $\log(z)$. To compare it with the standard projective structure $\cP_0$ on $\bC$ given by the single chart $z\colon \bC\to \bC$ we must compute the Schwarzian derivative \eqref{schwarz} with $f(z)=\log(z)$. The result is that $\cP-\cP_0$ is the quadratic differential
\[\phi(z)=-\frac{dz^{\tensor 2}}{4z^2}.\]
We therefore conclude that  $\cP$ is a meromorphic projective structure on $\bC$ with a double pole at the origin, with leading coefficient $a(p)=-\frac{1}{4}$ and exponent $r(p)=0$.
\end{example}

We say that the point $p\in S$ is an \emph{apparent singularity} of the projective structure $\cP$ if it is a regular singularity whose monodromy is the identity element of $G$. We define a \emph{signing} of a meromorphic projective structure $\cP$ to be a choice of sign of the exponent $r(p)$ at each  regular singularity  $p\in S$. A \emph{signed meromorphic projective structure} is a projective structure equipped with a signing.

\subsection{Marked bordered surfaces}
\label{mbs}

A \emph{marked bordered surface} is defined to be a pair $(\bS,\bM)$  consisting of a compact, connected, oriented, smooth surface with boundary, and a finite non-empty set $\bM\subset \bS$ of marked points, such that each boundary component of $\bS$ contains at least one marked point. Marked points in the interior of $\bS$ are called \emph{punctures}. The set of punctures is denoted $\bP\subset \bM$, and we write $\bS^*=\bS\setminus \bP$ for the corresponding punctured surface.

An isomorphism of marked bordered surfaces $(\bS_i,\bM_i)$ is an orientation-preserving diffeomorphism $f\colon \bS_1\to \bS_2$ between the underlying surfaces which induces a bijection $f\colon \bM_1\to \bM_2$. Two such isomorphisms are called isotopic if the underlying diffeomorphisms are related by an isotopy through diffeomorphisms $f_t\colon \bS_1\to \bS_2$ which also induce bijections $f_t\colon \bM_1\to \bM_2$.

It is sometimes more natural to replace the surface $\bS$ with the surface $\bS'$ obtained by taking the real oriented blow-up of $\bS$ at each puncture: this replaces  punctures  with boundary circles containing no marked points. 
A marked bordered surface $(\bS,\bM)$ is determined up to isomorphism by its genus $g=g(\bS)$ and an unordered collection of non-negative integers $\{k_1,\cdots,k_d\}$ giving the number of marked points on the boundary circles of the surface $\bS'$. An associated integer which will come up very often in what follows is
\begin{equation}\label{n}n = 6g-6+\sum_i (k_i+3).\end{equation}

The mapping class group $\MCG(\bS,\bM)$ of a marked bordered surface is defined to be the group of all isotopy classes of isomorphisms from $(\bS,\bM)$ to itself.

\begin{example}
Consider the surface $(\bS,\bM)$ consisting of a disc $\bS$ with $n$ marked points $\bM\subset \partial \bS$ lying on the boundary. An isomorphism from $(\bS,\bM)$ to itself is an orientation-preserving diffeomorphism $f\colon \bS\to \bS$ satisfying $f(\bM)\subset \bM$. The set $\bM$ has a natural cyclic ordering, and $f$ must induce a cyclic permutation. In this way one can see that $\MCG(\bS,\bM)\isom \bZ/n\bZ$ is a cyclic group of order $n$.
\end{example}

\subsection{Labelling meromorphic projective structures}
\label{la}
Suppose that $\phi$ is a meromorphic quadratic differential on a compact Riemann surface $S$ with at least one pole. We define an associated  marked bordered surface $(\bS,\bM)$ by the following construction.   To define the surface $\bS$ we take the  underlying smooth surface of $S$ and perform  an oriented real blow-up  at each pole of $\phi$ of order~$ >2$.  The marked points $\bM$ are defined to be the poles of $\phi$ of order $\leq 2$, considered as points of the interior of $\bS$, together with the points on the boundary of $\bS$ corresponding to the asymptotic horizontal directions of $\phi$.

Let $\cP$ be a meromorphic projective structure on a compact Riemann surface $S$ with at least one pole. We  define the marked bordered surface associated to the pair $(S,\cP)$  to be the marked bordered surface  associated to a polar differential of $\cP$. Note that this is well-defined because two meromorphic differentials $\phi_i$ on the same Riemann surface $S$ give rise to identical marked bordered surfaces if the difference $\phi_2-\phi_1$ is holomorphic. Indeed, the two differentials clearly have the same poles and the same asymptotic horizontal directions.

More concretely, the marked bordered surface $(\bS,\bM)$ associated to a meromorphic projective structure $\cP$ on a compact Riemann surface $S$ is determined as above by its genus $g=g(\bS)$ and the non-empty collection of non-negative integers $\{k_1,\cdots,k_d\}$. These can be read off from the pair $(S,
\cP)$ as follows. The genus is $g=g(S)$ and the number $d\geq 1$ is the number of poles of the projective structure $\cP$. Each  pole of $\cP$ of order $m_i\geq 3$ corresponds to an integer $k_i=m_i-2$, whereas each pole of order $m_i\leq 2$ corresponds to an integer $k_i=0$.

\section{Framed local systems}
\label{fr}

In this section we recall from \cite{IHES} the definition of the stack of framed $G=\PGL_2(\bC)$ local systems on a marked bordered surface. We also introduce the open substack of non-degenerate framed local systems.

\subsection{Framed local systems}
\label{framedlocalsystems}

Let $(\mathbb{S},\mathbb{M})$ be a marked bordered surface.  We shall be interested in $G$ local systems $\cG$ on the associated punctured surface $\bS^*$. By definition, $\cG$ is a principal $G$ bundle over $\bS^* $ equipped with a flat connection $\nabla$. The group $G$ acts naturally on $\bP^1$ on the left, so we can form the associated $\bP^1$ bundle\begin{equation}\label{l}
\cL=\cG\times_{G}\bP^1,
\end{equation}
 which inherits a flat connection from $\nabla$. 
For each point $p\in \bM$ let us fix a small contractible open neighbourhood  $p\in U(p)\subset \bS$.

A \emph{framed $G$ local system} on $(\bS,\bM)$ is a  pair $(\cG, \ell(p))$ consisting of  a $G$ local system $\cG$ on the surface $\bS^*$ equipped with the data of a framing: a choice of flat section $\ell(p)$ of the associated bundle $\mathcal{L}$ over each of the subsets  $V(p)=U(p)\cap \bS^*$. 
An isomorphism between two  framed $G$ local systems $(\cG_i,\ell_i(p))$ on $(\bS,\bM)$ is an isomorphism $\theta\colon \cG_1\to \cG_2$  between the underlying $G$ local systems on $\bS^*$, which preserves the framings, in the sense that $\theta(\ell_1(p))=\ell_2(p)$ for all $p\in \bM$.

\begin{remark}\label{so}If $p\in \bP$ is a puncture, the subset  $V(p)$ can be taken to be a small punctured disc in the interior of $\bS$, and the corresponding flat section $\ell(p)$ is necessarily invariant under parallel transport  around the loop  encircling $p$.\end{remark}

Let us fix a base-point $x\in \bS^*$. We also choose, for each point $p\in \bM$, a path $\beta_p$ connecting $x$ to~$p$ whose interior lies in $\bS^*$. Then, for each puncture $p\in \bP$, we can define an element  $\delta_p\in \pi_1(\bS^*,x)$  by travelling from $x$ to $V(p)$ along the path $\beta_p$, encircling $p$ counter-clockwise by a small loop, and then returning to $x$ along $\beta_p$.

By a \emph{rigidified framed $G$ local  system}  we mean a framed $G$ local system $(\cG, \ell(p))$ together with the choice  of  an element $s$ of the fibre $\cG_{x}$. An isomorphism between two rigidified framed $G$ local systems $(\cG_i, \ell_i(p),s_i)$ is an isomorphism $\theta$ between  the underlying framed $G$ local systems as above, which satisfies  $\theta(s_1)=s_2$.

\begin{lemma}
\label{sore}
There is a bijection between the set of isomorphism classes of  rigidified  framed $G$ local systems on $(\bS,\bM)$ and the set of points of the  complex projective variety
\[X(\bS,\bM)=\Big\{\rho\in \Hom_{\Grp}(\pi_1(\bS^*,x),G), \lambda\in \Hom_{\Set}(\bM,\bP^1):  \rho(\delta_p)(\lambda(p))=\lambda(p)\text{ for all }p\in \bP\Big\}.\]
\end{lemma}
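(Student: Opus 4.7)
The plan is to set up the bijection via the classical holonomy construction, adapted to track both the rigidification at the base point and the framings at the marked points.

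First I would construct the forward map. Given a rigidified framed local system $(\cG,\ell(p),s)$, the choice of base element $s \in \cG_x$ rigidifies the standard holonomy construction: each loop $\gamma \in \pi_1(\bS^*,x)$ gives a monodromy element $\rho(\gamma) \in G$ defined by comparing $s$ with its parallel transport along $\gamma$. To define $\lambda(p)$ for $p \in \bM$, I parallel transport the flat section $\ell(p)$ of $\cL = \cG \times_G \bP^1$ from $V(p)$ back to $x$ along the inverse of the path $\beta_p$, then identify $\cL_x \cong \bP^1$ using $s$. For $p \in \bP$, the subset $V(p)$ is a punctured disc, and the existence of the flat section $\ell(p)$ on this non-simply-connected region is equivalent to $\ell(p)$ being invariant under the monodromy around $p$ (as noted in Remark~\ref{so}); transported back to $x$, this is exactly the condition $\rho(\delta_p)(\lambda(p)) = \lambda(p)$. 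For $p$ on the boundary of $\bS$, the set $V(p)$ is contractible and no such condition is needed.

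Next I would construct the inverse. Given a point $(\rho,\lambda) \in X(\bS,\bM)$, build the $G$ local system by the classical recipe $\cG := (\widetilde{\bS^*} \times G)/\pi_1(\bS^*,x)$, with $\pi_1$ acting diagonally by deck transformations and by $\rho$. The identity element in the fibre over the lift of $x$ descends to a canonical rigidifying element $s \in \cG_x$. To define the framing, use parallel transport along $\beta_p$ to carry $\lambda(p) \in \bP^1 \cong \cL_x$ from $x$ to a point in $V(p)$, then extend flatly. For $p$ on the boundary, $V(p)$ is simply connected so this automatically extends to a well-defined flat section. For $p \in \bP$, the constraint $\rho(\delta_p)(\lambda(p)) = \lambda(p)$ is precisely what is needed for the section to descend unambiguously to the punctured disc $V(p)$.

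Finally I would verify that these constructions are mutually inverse at the level of isomorphism classes. An isomorphism of rigidified framed local systems is required to preserve both the element $s$ and the framing sections $\ell(p)$; preservation of $s$ forces the two monodromy representations $\rho_1, \rho_2$ to be equal (not merely conjugate), and preservation of the framings forces the corresponding $\lambda_i$ to agree. Conversely, isomorphic rigidified framed local systems built from the same $(\rho,\lambda)$ are canonically identified by the construction. The main thing to be careful about is the asymmetry between boundary marked points and punctures: the constraint $\rho(\delta_p)(\lambda(p))=\lambda(p)$ appears for punctures and only for punctures, reflecting precisely the topology of $V(p)$ in each case. Once this bookkeeping is in place, the bijection is immediate.
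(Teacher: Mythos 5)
Your construction is correct and follows essentially the same route as the paper: the forward map is defined identically (using $s$ to identify $\cG_x\isom G$ and $\cL_x\isom\bP^1$, transporting the framings back along the paths $\beta_p$, and deducing the fixed-point condition at punctures from flatness of $\ell(p)$ on the punctured disc $V(p)$). The paper leaves the bijectivity to the reader, citing the standard correspondence between rigidified local systems and representations; your explicit inverse via the quotient $(\widetilde{\bS^*}\times G)/\pi_1(\bS^*,x)$ and the check of mutual inverseness simply fill in that standard step.
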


\begin{proof}
Consider a rigidified framed $G$-local system $(\cG,\ell(p),s)$. The element $s\in \cG_x$  gives an identification $G\isom \cG_x$, defined by mapping $g\mapsto s\cdot g$. This induces an identification  $\cL_x\isom \bP^1$. Parallel transport along $\cG$ defines a group homomorphism $\rho\colon \pi_1(\bS^*,x)\to G$. Parallel transport of the flat section $\ell(p)$  along the path $\beta_p$ gives an element $\lambda(p)\in \cL_x\isom \bP^1$. Remark \ref{so} implies that there is a  relation $\rho(\delta_p)(\lambda(p))=\lambda(p)$ for each puncture $p\in \bP$. 

This construction defines a map from the set of isomorphism classes of  rigidified  framed $G$ local systems on $(\bS,\bM)$ to the set of points of $X(\bS,\bM)$. The proof that this is a bijection is left to the reader; the only non-trivial point is the well-known fact that isomorphism classes of rigidified $G$ local systems on $\bS^*$ correspond bijectively to elements of $\Hom_{\Grp}(\pi_1(\bS^*,x),G)$.
\end{proof}

The group $G$ acts on the set of isomorphism classes of rigidified framed local systems $(\cG,\ell(p),s)$ by fixing the underlying framed local system $(\cG,\ell(p))$ and mapping  $s\mapsto s\cdot g$. The induced action on $X(\bS,\bM)$ is given by
\[g\cdot (\rho,\lambda) =(g\cdot \rho\cdot  g^{-1}, g\circ \lambda).\]
The moduli stack of framed $G$-local systems is by definition the quotient stack
\[\cX(\bS,\bM)=X(\bS,\bM)/G.\]
Although we will occasionally find it convenient to use  the language of stacks in what follows, there is in fact no real need for this, since we will  be mainly interested in the  open subvariety of non-degenerate rigidified framed local systems $X^*(\bS,\bM)$ introduced in the next section, for which the corresponding quotient $\cX^*(\bS,\bM)$ is a (possibly non-Hausdorff) complex manifold.

\subsection{Degenerate framed local systems}
\label{bob}

We now introduce an open subset of the stack of  framed local systems whose points we call {non-degenerate framed local systems}. This subset will be small enough that it forms a complex manifold, but large enough that it contains the image of our generalised monodromy map.  For the purposes of this definition, we recall that an element of $G=\PGL_2(\bC)$ is called \emph{parabolic} if it  has a single fixed point on $\bP^1$. 

\begin{defn}
\label{rn}
A framed local system $(\cG,\ell(p))$ on a marked bordered surface $(\bS,\bM)$ is said to be \emph{degenerate} if one of the following  three conditions holds:
\begin{itemize}
\item[(D1)] There is a connected component $I$ of the punctured boundary $\partial \bS\setminus \bM$ such that the two chosen sections $\ell(p_i)$ defined near the points $p_1,p_2\in \bM$ lying at the ends of  $I$    coincide under parallel transport along $I$.
\smallskip

\item[(D2)]There exists an unordered  pair of distinct points  $\ell_\pm(p)$ in each fibre of the bundle $\cL$, each of which locally defines a flat section of $\cL$ but which may be permuted by the monodromy of $\cL$,  such that for each $p\in \bM$ the chosen section $\ell(p)$ coincides with one of the $\ell_\pm(p)$.
  \smallskip

\item[(D3)]  The surface $\bS$ is closed, there  exists a single flat section $\ell$ of the bundle $\cL$ over $\bS^*$ such that for each $p\in \bM$ the chosen section $\ell(p)$ coincides with $\ell$,   and moreover, the monodromy around each puncture is either parabolic or the identity.
 \end{itemize}
\end{defn}
 
   If none of these conditions holds we call  the framed local system $(\cG,\ell(p))$ \emph{non-degenerate}.   We consider a rigidified framed $G$ local system to be degenerate  or non-degenerate precisely if the underlying framed $G$ local system is.

\begin{remarks}
\label{remas}
\begin{itemize}
\item[(i)] Note that for any given surface $(\bS,\bM)$ we need only ever consider two of the above conditions, since when $\bS$ is  closed  condition (D1) is vacuous. Condition (D1) relates to the standard opposedness property of subdominant solutions: see Proposition~\ref{e}(i) below.\smallskip

\item[(ii)] Condition (D2) is perhaps more easily stated in terms of rigidified framed local systems, and the corresponding points of the variety $X(\bS,\bM)$. It follows easily from the definition that a point $(\rho,\lambda)$ of $X(\bS,\bM)$ corresponds under the bijection of Lemma \ref{sore} to a rigidified framed local system satisfying condition (D2)  precisely if there is an unordered  pair of points $\{a,b\}\in \bP^1$  which are fixed or permuted by all monodromies $\rho(\gamma)$, and which include all points $\lambda(p)$ for $p\in \bM$.\smallskip

\item[(iii)] Note that when a framed local system satisfies condition (D2), the two locally-defined sections $\ell_\pm$  are fixed individually, rather than permuted, by the monodromy around any puncture. Indeed,  one of these two points is the framing $\ell(p)$, which is fixed by definition. In particular, if the monodromy around a puncture is not the identity then the fixed points of its action on $\cL$  are precisely the lines $\ell_\pm$.\smallskip

\item[(iv)] A framed local system satisfying condition (D2) gives rise to a permutation representation
\begin{equation}
\label{lablab}\sigma\colon \pi_1(\bS)\to \{\pm 1\},\end{equation}
such that $\rho(\gamma)(\ell_+)=\ell_{\sigma(\gamma)}$.  This  in turn defines an unramified double cover $\pi\colon {\bT}\to \bS$. Pulling back the framed local system to this double cover gives a framed local system on~$\bT$ satisfying condition (D2) in the stronger sense that the associated $\bP^1$ bundle has two flat sections $\ell_\pm$ globally defined over the punctured surface $\bT^*$ which restrict to give all framings.\smallskip

\item[(v)] Let $(\cG,\ell)$ be a framed local system and suppose that the  monodromy around some puncture $p\in \bP$ is neither parabolic nor the identity. Then this monodromy has two eigenlines, and we can replace the framing $\ell(p)$ at the puncture $p$ by the  other eigenspace to give a new framed local system. We claim that this flipped framed local system is degenerate precisely if the original one is. Indeed, this statement is immediate for conditions (D1) and (D3),  and the claim for (D2) follows from easily from (iii) above.
\end{itemize}
\end{remarks}

\subsection{Basic properties of non-degeneracy}
The following result shows that the notion of non-degeneracy defined above is an open condition on rigidified framed local systems.

\begin{lemma}
Under the bijection of Lemma \ref{sore}, the subset of isomorphism classes of  non-degenerate  rigidified  framed $G$-local systems   corresponds to a Zariski open subset \[X^*(\bS,\bM)\subset X(\bS,\bM).\]
\end{lemma}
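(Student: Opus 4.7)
My plan is to show the degenerate locus in $X(\bS,\bM)$ is Zariski closed by treating each of the three conditions (D1), (D2), (D3) in turn.

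For (D1), for each connected component $I$ of $\partial\bS\setminus\bM$ with endpoints $p_1,p_2\in\bM$, I choose an element $\gamma_I\in\pi_1(\bS^*,x)$ obtained by concatenating $\beta_{p_1}$, a push-off of $I$ into $\bS^*$, and $\beta_{p_2}^{-1}$. Under Lemma \ref{sore} the condition imposed by the arc $I$ becomes the closed polynomial equation $\rho(\gamma_I)\cdot\lambda(p_1)=\lambda(p_2)$, and the full condition (D1) is the finite union over all such arcs. For (D3), relevant only when $\bS$ is closed, the requirement that all $\lambda(p)$ coincide with a common $\rho$-fixed point translates to the closed system $\lambda(p_i)=\lambda(p_1)$ together with $\rho(\gamma_k)(\lambda(p_1))=\lambda(p_1)$ for $\gamma_k$ running over a finite generating set of $\pi_1(\bS^*,x)$; and the condition that $\rho(\delta_p)$ is parabolic or the identity is the vanishing of the scale-invariant polynomial $\tr^2-4\det$ on each $\rho(\delta_p)$, which is closed.

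For (D2) I will introduce the projective parameter space $\bP^2=\bP H^0(\bP^1,\O(2))$ of binary quadrics up to scale, and consider the closed incidence variety
\[
Z=\{(\rho,\lambda,[Q])\in X(\bS,\bM)\times\bP^2:\ \rho(\gamma)\cdot[Q]=[Q]\ \forall\gamma,\ Q(\lambda(p))=0\ \forall p\}.
\]
Since $\bP^2$ is projective the projection to $X(\bS,\bM)$ is proper and its image is Zariski closed. This image decomposes as (D2) itself (contributed by non-degenerate $[Q]$) together with an auxiliary locus $A$ of pairs $(\rho,\lambda)$ for which $\rho$ has a common fixed point $a$ equal to all the framings $\lambda(p)$ (contributed by the degenerate quadrics $(z-a)^2$). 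In the case of non-empty boundary, $A$ is immediately contained in (D1) since $\rho(\gamma_I)(a)=a=\lambda(p_2)$ trivially, so (D1)$\,\cup\,$(D2) coincides with the closed image of $Z$ unioned with (D1), and is Zariski closed.

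In the closed case a more delicate specialisation argument is needed, since $A$ is not in general contained in (D2)$\,\cup\,$(D3). Here I argue directly that the closure of (D2) inside $X(\bS,\bM)$ lies in (D2)$\,\cup\,$(D3): for a sequence $(\rho_n,\lambda_n)\in\,$(D2) with invariant pair $\{a_n,b_n\}$, pass to a subsequence so that $\{a_n,b_n\}\to\{a,b\}$ in $\mathrm{Sym}^2\bP^1$; either $a\neq b$, and the limit still lies in (D2), or $a_n,b_n\to a$, in which case the tori $T_{a_n,b_n}$ degenerate to the unipotent subgroup $U_a$ and, after a careful analysis of the behaviour of the ``swap'' coset $\sigma_n T_{a_n,b_n}\subset N(T_{a_n,b_n})$, the limit representation has image in $B_a$ with each $\rho(\delta_p)$ of a restricted form that places $(\rho,\lambda)$ into (D2) or (D3). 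The main technical obstacle is precisely this degeneration analysis; once it is completed, the degenerate locus is a finite union of Zariski closed subsets and its complement $X^*(\bS,\bM)\subset X(\bS,\bM)$ is Zariski open.
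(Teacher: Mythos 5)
Your treatment of (D1) and (D3), and of (D2) when $\bS$ has non-empty boundary, is sound. In particular the incidence-variety argument over the $\bP^2$ of binary quadrics is a clean way to get Zariski-closedness of the union of the (D1) and (D2) loci directly from properness, and you correctly identify the excess locus $A$ coming from double roots and absorb it into (D1). (The paper instead shows the (D1) and (D3) loci are cut out by explicit closed conditions and proves that the closure of the (D2) locus is contained in the union of all three, by a specialisation argument, in every case.)

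The gap is in the closed-surface case, where you fall back on the specialisation strategy but explicitly leave the degeneration analysis as ``the main technical obstacle''. That step is precisely where the content of the lemma lies in this case and it cannot be deferred: without it you have not shown that the limit point is degenerate, and, as you note yourself, the excess locus $A$ is genuinely not contained in the degenerate locus when $\bS$ is closed, so the properness trick does not rescue you. Moreover your description of what must be analysed --- the swap coset $\sigma_r T_{a_r,b_r}$ --- points in the wrong direction. The relevant elements are the puncture monodromies $\rho_r(\delta_p)$, and by Remark \ref{remas}(iii) these never swap the two sections: one of $\{a_r,b_r\}$ is the framing $\lambda_r(p)$, which is fixed by definition, so each $\rho_r(\delta_p)$ is either the identity or has fixed-point set exactly $\{a_r,b_r\}$. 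Passing to a subsequence on which one alternative holds for all $r$, a convergent sequence of non-scalar elements whose fixed-point quadratics have roots $\{a_r,b_r\}\to\{a,a\}$ converges to an element whose fixed-point quadratic has a double root, i.e.\ to a parabolic (or else the limit is scalar, i.e.\ the identity). Combined with the observation that $\rho(\gamma)(a)=\lim_r\rho_r(\gamma)(a_r)\in\lim_r\{a_r,b_r\}=\{a\}$ for every $\gamma$, so that $a$ defines a global flat section agreeing with all the limiting framings, this places the limit in the (D3) locus. Note that for a general $\gamma$ the limit of swap-coset elements need not be parabolic or the identity at all (for instance $z\mapsto -z$ swaps $\pm\epsilon$ for every $\epsilon$ and survives in the limit as a non-parabolic element), which is exactly why the argument must be run on the $\rho(\delta_p)$ rather than on the whole image of $\rho$; condition (D3) only constrains the puncture monodromies, so this suffices.
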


\begin{proof}
For each $n=1,2,3$ we denote by $X_n\subset X=X(\bS,\bM)$  the subset of rigidified framed local systems satisfying the corresponding condition (D$n$) of Definition \ref{rn}. We must show that the union  $X_1\cup X_2\cup X_3$ is a closed subset. 

We first claim that the subset $X_1$ is closed. Indeed, consider a connected component  $I\subset \partial \bS\setminus \bM$, with endpoints $p,q\in \bM$. Let $\delta_{I}\in \pi_1(S,x)$ be the element obtained by following the path $\beta_p$  from $x$ to $p$, traversing the boundary arc $I$ from $p$ to $q$, and then returning to $x$ along the path $\beta_q$.   A point $(\rho,\lambda)$ lies in $X_1$ precisely if for some such $I$ one has  $\rho(\delta_{I})(\lambda(p))= \lambda(q)$. This is clearly a closed condition.

The subset $X_3$ is also closed because, when $\bS$ is closed,  it coincides with the set of points $(\rho,\lambda)$  satisfying
\begin{itemize}
\item[(i)]
all points of $\bP^1$ of the form $\rho(\gamma)(\lambda(p))$ are equal,
\item[(ii)]  all  elements $\rho(\delta_p)$ are parabolic or the identity.
\end{itemize}
The second of these conditions is indeed closed because an element $M\in \GL(2,\bC)$ defines a parabolic element of $G$ or the identity precisely if $\tr(M)^2=4\det(M)$.

To complete the proof we will prove that the closure of $X_2$ is contained in  $X_1\cup X_2\cup X_3$.   Consider a sequence of points $(\rho_r,\lambda_r)\in X_2$ converging as $r\to \infty$ to some point $(\rho,\lambda)\in X(\bS,\bM)$. For each~$r$ there is a corresponding unordered pair of points $\{a_r,b_r\}\in \bP^1$ as in Remark \ref{remas} (ii). We can choose an arbitrary ordering of these,  and passing  to a subsequence assume that the sequences $a_r$ and $b_r$ converge to (not necessarily distinct) points $a,b\in \bP^1$. By continuity,  the points $a,b\in \bP^1$ are fixed or permuted by all monodromies $\rho(\gamma)$, and include all points $\lambda(p)$ for $p\in \bM$. If the points $a$ and $b$ are distinct we conclude that $(\rho,\lambda)\in X_2$. On the other hand, if  $a=b$, and $\bS$ has non-empty boundary, then $(\rho,\lambda)\in X_1$.
 
To conclude the proof, we claim that if $a=b$ and $\bS$ is closed, then all puncture monodromies $\rho(\delta_p)$  are either parabolic or the identity, so that $(\rho,\lambda)\in X_3$. To see this, note that  $\rho(\delta_p)$ is the limit as $r\to \infty$ of elements of $\rho_r(\delta_p)\in G$ which are either the identity, or whose fixed points set consists of the two points $\{a_r,b_r\}$. Passing to a subsequence we can assume that only one of these two possibilities occurs. It is then easy to see that 
$\rho(\delta_p)$ is either parabolic or the identity.
\end{proof}

We have not been able to show directly that the quotient
\[\cX^*(\bS,\bM)=X^*(\bS,\bM)/G\]
is a  complex manifold, although we will prove this indirectly in Section \ref{final}. It is quite straightforward however to show that this action is free, so we include a proof of this.

\begin{lemma}
The action of $G$ on the variety $X(\bS,\bM)$ considered above restricts to a free action on the open subset $X^*(\bS,\bM)$.
\end{lemma}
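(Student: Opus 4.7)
The plan is to work directly with the description of $X(\bS,\bM)$ in Lemma \ref{sore}. Suppose $g \in G$ fixes a point $(\rho,\lambda) \in X^*(\bS,\bM)$, so $g$ commutes with $\rho(\gamma)$ for every $\gamma \in \pi_1(\bS^*,x)$ and $g(\lambda(p)) = \lambda(p)$ for every $p \in \bM$. I would like to show that if $g$ is non-trivial then one of the degeneracy conditions (D1), (D2), (D3) must hold, contradicting non-degeneracy. Since every non-identity element of $G$ is either parabolic (one fixed point on $\bP^1$) or has exactly two fixed points, the argument splits into two cases.

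First consider the case that $g$ has two distinct fixed points $\{a,b\} \subset \bP^1$. Since $g$ commutes with each $\rho(\gamma)$, the unordered set $\{a,b\}$ is preserved by all monodromies, so each $\rho(\gamma)$ either fixes both points or swaps them. On the other hand each $\lambda(p)$ is fixed by $g$, so $\lambda(p) \in \{a,b\}$. Transporting $a$ and $b$ locally gives an unordered pair $\ell_\pm$ of locally flat sections of $\cL$ which may be permuted by the monodromy and which contain every framing $\ell(p)$; this is exactly condition (D2), a contradiction.

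Next consider the case that $g$ is parabolic with unique fixed point $a \in \bP^1$. Every element commuting with $g$ must preserve the singleton set $\{a\}$, hence fix $a$, so $\rho(\gamma)(a) = a$ for all $\gamma$. Similarly, the only point of $\bP^1$ fixed by $g$ is $a$, so $\lambda(p) = a$ for every $p \in \bM$. This produces a single globally flat section $\ell$ of $\cL$ over $\bS^*$ which agrees with every framing $\ell(p)$. If $\bS$ has non-empty boundary, then for any connected component $I$ of $\partial \bS \setminus \bM$ with endpoints $p_1,p_2$, parallel transport along $I$ sends $\ell(p_1)$ to $\ell(p_2)$ by flatness, so (D1) holds. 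If $\bS$ is closed, I still need to verify that every puncture monodromy is parabolic or the identity; this is where the main algebraic input enters, namely the standard fact that the centralizer of a non-trivial parabolic element of $\PGL_2(\bC)$ consists of the identity together with the parabolics sharing its fixed point. Given this, each $\rho(\delta_p)$ is parabolic or the identity, and (D3) holds.

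Thus in every case the point $(\rho,\lambda)$ is degenerate, contradicting the assumption that it lies in $X^*(\bS,\bM)$. The only step that is not entirely mechanical is the centralizer computation for parabolic elements used in the closed-surface sub-case, but this is a standard and short linear-algebra calculation in $\PGL_2(\bC)$; the rest of the argument is a direct translation of the commutation hypotheses into the three clauses of Definition \ref{rn}.
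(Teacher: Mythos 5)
Your proof is correct and follows essentially the same route as the paper: split on whether the non-trivial stabilizing element $g$ has one or two fixed points on $\bP^1$, use the centralizer structure of $\PGL_2(\bC)$ to constrain the monodromies and framings, and conclude degeneracy via (D2) in the first case and (D1) or (D3) in the second. The only difference is that you spell out the parabolic-centralizer computation slightly more explicitly than the paper does; the substance is identical.
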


\begin{proof}
Suppose  that some non-trivial element $g\in G$ fixes an element $(\rho,\lambda)$ of $X(\bS,\bM)$. We must show that the rigidified framed local system corresponding to $(\rho,\lambda)$ is degenerate. 

Suppose  first that $g$ has two fixed points $\{a,b\}\in \bP^1$.  Recall that two non-trivial elements of $G$ commute precisely if they have the same set of fixed points in $\bP^1$. Since the elements $\rho(\gamma)\in G$ all commute with $g$, they must also preserve the points $\{a,b\}$. Moreover since $g$ fixes all the points $\lambda(p)\in \bP^1$, the image of $\lambda$ must be contained in $\{a,b\}$. This is precisely the condition that  $(\rho,\lambda)$ satisfies the condition (D2) of Definition \ref{rn}.

The other possibility is that $g$ has a single fixed point. Then, by the same argument, the map $\lambda$ is constant, and all non-trivial elements $\rho(\gamma)\in G$ have a single fixed point. If $\bS$ has non-empty boundary, it follows that  $(\rho,\lambda)$ satisfies condition (D1) of Definition \ref{rn}, while if $\bS$ is closed, it satisfies condition (D3). \end{proof}

\section{Local study of singularities}

In this section we study the local properties of a meromorphic projective structure in a neighbourhood of a pole. This material is quite well-known but for the benefit of the reader we include simple proofs of most of the statements. 

 \subsection{Local description}
  
\label{localstudy}
Consider a quadratic differential \begin{equation}
\label{langset}\phi(z)=\varphi(z)\, dz^{\tensor 2},\end{equation} which is  meromorphic in a disc $\Delta\subset \bC$ centered at the origin  $z=0$ in the complex plane, and which has a pole of order $m\geq 1$ at the origin. Shrinking the disc if necessary we can assume that $\varphi(z)$ has no other poles in $\Delta$. We can write
\[\phi(z)=z^{-m}\cdot (a_0 + a_1 z + a_2 z^2 + \cdots) \cdot dz^{\tensor 2}.\]

Since $\Delta\subset \bC$ has a standard projective structure, the differential $\phi(z)$  induces a meromorphic projective structure $\cP$ on the Riemann surface $\Delta$ whose charts over open subsets of $\Delta^*=\Delta\setminus\{0\}$ are given by quotients of linearly independent solutions to the equation
\begin{equation}
\label{schrod3}
y''(z)-\varphi(z)\cdot y(z)=0,\end{equation}
which is equivalent to the linear system
\begin{equation}
\label{bl} \frac{d}{dz} \vect{y(z)}{-y'(z)} +A(z) \vect{y(z)}{-y'(z)}=0, \qquad A(z)=\mat{0}{1}{\varphi(z)}{0} .\end{equation}

We will focus mostly on the case when $m=2q$ is even, since we can reduce the general case to this one by taking a branched covering. Gauge transforming by the matrix $Q(z)=\diag(z^{-q},1)$  then has the effect of replacing $A(z)$ by
\begin{equation}
\label{q}B(z)=Q(z)A(z)Q^{-1}(z) - Q'(z)Q^{-1}(z)=\mat{qz^{-1}}{z^{-q}}{z^q\cdot \varphi(z)} {0}.\end{equation}
The leading order term of $B(z)$  as $z\to 0$ is then the $z^{-q}$ term, which is easily seen to be diagonalisable. 
\subsection{The regular case}

Let us consider the situation of Section \ref{localstudy} in the case $m\leq 2$. Note that $a=a_0$ is the leading coefficient of $\cP$ at the pole $z=0$ as defined in Section \ref{sig}. Recall also the definition of the exponent
\[r=\pm 2\pi i \cdot \sqrt{1+4a}.\]
There is a well-defined conjugacy class of elements in $G$ given by the monodromy of the projective structure $\cP$ around $z=0$.  

\begin{lemma}
\label{above}
The eigenvalues of the monodromy of the system \eqref{bl} around $z=0$ are
 \[\lambda_\pm=-\exp(\pm r/2).\]
 \end{lemma}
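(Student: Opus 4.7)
The plan is to use the classical Frobenius method for ODEs with a regular singular point. Since $m\le 2$, the coefficient $\varphi(z)$ has at worst a double pole at the origin, with leading term $a/z^2$ where $a = a_0$; thus \eqref{schrod3} has a regular singularity at $z=0$ with indicial equation obtained by inserting the ansatz $y(z) = z^{\mu}(1+c_1 z + c_2 z^2 + \cdots)$ and equating coefficients of $z^{\mu - 2}$. This gives $\mu(\mu-1) = a$, whose roots are
\[
\mu_\pm = \frac{1 \pm \sqrt{1+4a}}{2}.
\]

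First I would treat the generic case in which the exponent difference $\mu_+ - \mu_- = \sqrt{1+4a}$ is \emph{not} a non-negative integer. Standard Frobenius theory then produces two linearly independent solutions $y_\pm(z) = z^{\mu_\pm} h_\pm(z)$, with $h_\pm(z)$ holomorphic near $0$ and $h_\pm(0) = 1$; the recursion for the coefficients of $h_\pm$ can be solved inductively because the non-resonance assumption ensures the relevant denominators never vanish. Analytic continuation around $z=0$ sends $y_\pm \mapsto e^{2\pi i \mu_\pm}\, y_\pm$, and the same scalar acts on $y_\pm'$, so the vectors $(y_\pm,\,-y_\pm')^{T}$ are eigenvectors of the monodromy of the system \eqref{bl} with eigenvalues $e^{2\pi i \mu_\pm}$. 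A direct computation now yields
\[
e^{2\pi i \mu_\pm} \;=\; e^{\pi i}\cdot e^{\pm \pi i \sqrt{1+4a}} \;=\; -\exp(\pm r/2),
\]
using that $r/2 = \pm \pi i\sqrt{1+4a}$; note that the sign ambiguity in the definition of $r$ merely exchanges $\lambda_+ \leftrightarrow \lambda_-$, which is consistent with the unordered nature of the eigenvalue pair.

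The only obstacle is the resonant case, when $\sqrt{1+4a}$ is a non-negative integer (including $1+4a=0$, where $\mu_+ = \mu_-$). Here one of the Frobenius solutions may acquire a logarithm: $y_+(z) = c\, y_-(z)\log z + z^{\mu_+}\tilde h(z)$ for some constant $c$. However in the basis $(y_-, y_+)^{T}$ the monodromy matrix is upper triangular with diagonal entries $e^{2\pi i \mu_\pm}$, so its eigenvalues remain $-\exp(\pm r/2)$. Alternatively one can finish by continuity, since the monodromy eigenvalues depend continuously on $a$ on the complement of the resonant locus, and the claimed formula is a continuous function of $a$. I expect this resonant analysis to be the only mildly delicate point; everything else reduces to classical Frobenius theory and the trigonometric identity $e^{\pi i} = -1$.
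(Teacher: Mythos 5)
Your proof is correct, and at its core it produces the same quadratic as the paper: your indicial polynomial $\mu(\mu-1)-a$ is exactly the characteristic polynomial of the residue matrix that the paper extracts. The routes differ in how the exponents are converted into monodromy eigenvalues. The paper gauge-transforms the system \eqref{bl} into the form \eqref{q}, notes that the result has a logarithmic pole whose residue matrix $B_0$ has trace $1$ and determinant $-a$, and then invokes the general fact that the monodromy of a logarithmic connection has the same characteristic polynomial as $\exp(2\pi i B_0)$ (citing Sabbah); that single citation absorbs the resonant case. You instead run the classical Frobenius method directly on the scalar equation \eqref{schrod3}, and must therefore treat resonance by hand, which you do correctly via the upper-triangular form of the monodromy in a Frobenius basis. (A small slip of convention: the logarithm attaches to the solution with the \emph{smaller} exponent, so the roles of $\mu_+$ and $\mu_-$ in your formula are reversed; this does not affect the diagonal entries, which coincide in the resonant case anyway.) Your continuity fallback should be phrased as a deformation of the potential, e.g.\ $\varphi_t(z)=\varphi(z)+t\,z^{-2}$, since the monodromy depends on all of $\varphi$ and not only on $a$; with that caveat it also works. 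Both arguments cover the simple-pole case $m=1$, where $a=0$: the paper observes that the residue matrix of \eqref{bl} itself is then nilpotent, while in your setup the indicial roots are $0$ and $1$, giving the repeated eigenvalue $1=-\exp(\pm\pi i)$ as required. Your approach is the more elementary and self-contained of the two; the paper's is shorter because the matrix-level statement it cites already packages the resonant analysis.
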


\begin{proof}
Consider first the case $m=2$ so that the leading term $a$ is nonzero. The system determined by \eqref{q} has a logarithmic singularity at $z=0$, and the residue matrix $B_0$ has  determinant $-a$ and trace $1$.
The eigenvalues of $B_0$  are therefore
\[ \lambda_\pm (B_0)=\frac{1}{2}\big (1\pm \sqrt{1+4a}\big )= \frac{1}{2}\big(1 \pm \frac{1}{2\pi i} r\big).\]
Since it is a general fact that the monodromy  has the same characteristic polynomial as $\exp(2\pi i B_0)$ (see e.g. \cite[Exercise II.4.5]{Sabbah}) the result follows.
Note that the  result also holds when  $a=0$. Indeed, in that case the function $\varphi(z)$ has a simple pole at $z=0$, and  the original system \eqref{bl} has a logarithmic singularity, whose residue matrix is nilpotent.
\end{proof}

We now have several possibilities:
\begin{itemize}
\item[(a)] If the exponent $r\notin (2\pi i)\cdot \bZ$ then the  residue matrix $B_0$ has distinct eigenvalues which do not differ by an integer. The monodromy   is therefore diagonalisable with distinct eigenvalues $-\exp(\pm r/2)$. There is a distinguished eigenline corresponding to each choice of sign  of $r$. \smallskip

\item[(b)] If $r=2\pi i n$ with $n\in \bZ$ the residue matrix $B_0$ has eigenvalues which differ by an integer. The monodromy  has repeated eigenvalue $(-1)^{n+1}$. There are then two cases:
\begin{itemize}
\item[(i)] The monodromy  is $(-1)^{n+1}\cdot I$. In this case the projective structure $\cP$ has an apparent singularity at $z=0$.

\item[(ii)] The monodromy  is conjugate to $(-1)^{n+1}\cdot U$, with $U$ a non-trivial unipotent matrix. The monodromy is then a parabolic element of $G$.
\end{itemize}
\end{itemize}

\subsection{The irregular case}
\label{local}

Let us consider again the situation of Section \ref{localstudy} but now assume that $m>2$. Once again we set $a=a_0$. We define the \emph{anti-Stokes rays} of the equation \eqref{schrod3}  at $z=0$ to be the asymptotic vertical directions of the quadratic differential $\phi(z)$. They are the $m-2$ directions where the expression $a\cdot z^{2-m}$ is real and negative. We refer to the closed sectors bounded by them as the \emph{Stokes sectors}.

\begin{thm}
\label{d}
In the interior of each Stokes sector there is a unique up-to-scale  vector-valued solution $v(z)$ to the system \eqref{bl} with $v(z)\to 0$ as $z\to 0$. This immediately gives a unique up-to-scale solution $y(z)$ to the equation \eqref{schrod3} such that $y(z)\to 0$ as $z\to 0$. 
 \end{thm}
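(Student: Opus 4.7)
The plan is to invoke the classical Hukuhara--Turrittin--Sibuya theory of asymptotic solutions at an irregular singular point, applied to the gauge-transformed system with coefficient $B(z)$ from \eqref{q}, after first reducing to the case of even pole order. The scalar conclusion for $y(z)$ will follow immediately from the vector one, since $y$ satisfies \eqref{schrod3} iff $(y,-y')^{\mathrm T}$ solves \eqref{bl}.

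For $m=2q$ even, the leading coefficient
$$B_{-q}=\begin{pmatrix}0&1\\a&0\end{pmatrix}$$
is diagonalisable with distinct eigenvalues $\pm\sqrt{a}$, so the standard formal normal form theorem produces two formal solutions
$$\hat v_\pm(z)=\exp(S_\pm(z))\,z^{\mu_\pm}\hat h_\pm(z),$$
where the polynomial $S_\pm(z)$ in $z^{-1}$ has dominant term $\pm\tfrac{2\sqrt a}{m-2}z^{-(m-2)/2}$, $\mu_\pm\in\bC$ are constants, and $\hat h_\pm(z)$ are $\bC^2$-valued formal power series with nonzero constant term. Sibuya's asymptotic existence theorem then realises each $\hat v_\pm$ as the asymptotic expansion of an actual holomorphic solution $v_\pm(z)$ in any sector of angular width less than $\pi/(q-1)$ around a generic direction, and one checks that such sectors can be chosen to cover the interior of each Stokes sector. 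Since the anti-Stokes rays are by definition those directions where $S_+(z)-S_-(z)$ is purely imaginary to leading order, i.e.\ where $az^{2-m}\in\bR_{<0}$, they coincide with the anti-Stokes rays of the quadratic differential $\phi$ defined in the text; in the interior of each Stokes sector, exactly one of the exponentials $\exp(S_\pm(z))$ decays to $0$ as $z\to 0$.

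Let $v_{\mathrm{sub}}(z)$ denote the analytic realisation of the formal solution whose exponential factor decays in the given Stokes sector. Then $v_{\mathrm{sub}}(z)\to 0$ in the interior of that sector, giving existence; pulling back through $Q(z)=\diag(z^{-q},1)$ multiplies the first component by $z^q$, which only improves the vanishing at $z=0$, so the result descends to a vanishing solution of the original system \eqref{bl}. For uniqueness, $v_+$ and $v_-$ form a basis of solutions along any ray in the sector, so any solution can be written as a linear combination $c_+v_++c_-v_-$; the coefficient of the dominant (blowing-up) term must vanish for the combination to tend to $0$, and hence every such solution is a scalar multiple of $v_{\mathrm{sub}}$. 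For $m$ odd I would pass to the branched double cover $z=w^2$, which doubles the pole order to the even case; each Stokes sector in $z$ lifts to a pair of sectors in $w$ exchanged by the Galois involution $w\mapsto -w$, and the construction descends.

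The main technical obstacle is the clean invocation of the Hukuhara--Turrittin--Sibuya theorem and the identification of its analytic Stokes sectors (arising from growth/decay of the $\exp(S_\pm(z))$) with the Stokes sectors defined combinatorially via the asymptotic vertical directions of the quadratic differential; once this identification is in hand, the existence and uniqueness assertions are routine consequences of the dominant/subdominant dichotomy. A secondary, bookkeeping obstacle is verifying that vanishing is preserved through both the gauge transformation $Q$ and the branched cover used to handle odd $m$.
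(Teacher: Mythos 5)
Your proposal is correct and follows essentially the same route as the paper: reduce odd pole order to even via the double cover $z=w^2$, gauge-transform to a system with diagonalisable leading term, invoke the classical asymptotic theory at an irregular singularity, match the anti-Stokes rays with the asymptotic vertical directions of $\phi$, and conclude by the dominant/subdominant dichotomy. The only cosmetic difference is that the paper cites Theorem A of Balser--Jurkat--Lutz to get a single distinguished fundamental solution on a sector of angle exactly $\pi/(q-1)$, whereas you patch realisations of the individual formal solutions over narrower sectors; both are instances of the same Hukuhara--Turrittin--Sibuya machinery.
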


\begin{proof}
Consider first the case when $m=2q$ is even. The leading order term of the matrix \eqref{q} has determinant $-a$ and trace $0$, so after a constant gauge change the corresponding matrix differential equation becomes equivalent to
\begin{equation}\label{may}Y'(z) = M(z)\cdot Y(z), \quad M(z)=\sum_{i=0}^\infty M_i \cdot z^{i-q}, \quad M_0=\mat{\sqrt{a}}{0}{0}{-\sqrt{a}}.\end{equation}
 This agrees with the form considered by Balser, Jurkat and Lutz \cite{BJL} (their variable $z$ is our $z^{-1}$, and their positive integer $r$ is our $q-1$). 
There is a unique formal solution of the form
\[
\Hat{Y}(z)=F(z) \cdot \exp Q(z),
\]
where $F(z)=\sum_{i=0}^\infty F_i \cdot z^i$ is a formal power series with $F_0=1$ the identity matrix,  and \[Q(z)=Q_0\cdot \log(z)+\sum_{i=1}^{q-1} Q_i\cdot z^{-i}\]
is a finite sum, with each matrix $Q_i$ diagonal, and $Q_{q-1}= -M_0/(q-1)$. 

Note that the behaviour of the factor $\exp Q(z)$ as $z\to 0$ is  dominated by the diagonal matrix \[\exp(-M_0\cdot z^{1-q}/(q-1)),\] whose entries grow or decay exponentially as $z\to 0$ along a ray depending on the sign $\epsilon(z)\in\{\pm 1\}$ of the real part of the expression $\sqrt{a} \cdot z^{1-q}$.  Note that this sign $\epsilon(z)$ is constant in each Stokes sector and flips when an anti-Stokes ray is crossed.

Consider an open sector $\Sigma$ of the $z$-plane with angle $\pi/(q-1)$. The basic existence result Theorem A of \cite{BJL} shows that  there is a distinguished fundamental solution
\begin{equation}
\label{dol}Y\colon \Sigma\to \GL_2(\bC)\end{equation}
 to \eqref{may} in this sector with the property that
\begin{equation}
\label{fl2}Y(z)\cdot \exp (-Q(z))\to 1\qquad \text{as }z\to 0 \text{ in }\Sigma.\end{equation}
 This is  stated in \cite{BJL} only for sectors bounded by the rays spanned by the $2(q-1)$ roots of unity, but the general case follows immediately by rescaling $z$ in the initial equation. 
 It follows that the columns of the matrix $Y(z)$ give vector-valued solutions to \eqref{may} which have either exponential growth or decay as $z\to 0$ along rays in $\Sigma$ depending on the sign $\epsilon(z)$. Tracing back the meromorphic gauge change to \eqref{bl} this then gives the claim.

Let us now suppose that $m$ is odd. Consider the branched double cover
\[f\colon \Delta\to \Delta, \qquad w\mapsto z=w^2.\]
Pulling back the standard projective structure on $\Delta$ gives a projective structure with a pole of order 2 at $w=0$, as is easily checked by computing the Schwarzian derivative
 \[-\frac{1}{2}\bigg[ \bigg(\frac{(w^2)''}{(w^2)'}\bigg)'- \bigg(\frac{(w^2)''}{(w^2)'}\bigg)^2\bigg]=\frac{1}{w^2}.\]
Pulling back the  quadratic differential \eqref{langset} gives the differential \[f^*(\phi)(w)=4w^2 \varphi(w^2)\, dw^{\tensor 2},\]
which has a pole of order $2m-2>2$ at the point $w=0$.  Thus  the meromorphic projective structure $\cP$ defined by ratios of solutions to \eqref{schrod3} pulls back to give a meromorphic projective structure with a pole of even order $2m-2$. We can then apply the  argument above to obtain unique subdominant solutions in the sectors bounded by the asymptotic horizontal directions of $f^*(\phi)$ and then project back down via the double cover $f$  to give the result.
 \end{proof}

The solutions to \eqref{schrod3} or equivalently \eqref{bl} given by Theorem \ref{d} are called  \emph{subdominant} solutions. In terms of the marked bordered surface $(\bS,\bM)$ associated to the quadratic differential $\phi(z)$, they give  distinguished solutions in a neighbourhood of each marked point on the boundary of $\bS$.

\subsection{More on Stokes data}

Consider again the  situation of Section \ref{localstudy} and assume again that $m>2$. We shall need the following properties of the subdominant solutions defined in the last section. 

\begin{prop}
\label{e}

\begin{itemize}
\item[(i)]  Suppose two Stokes sectors $\Sigma_1,\Sigma_2$ meet along an anti-Stokes ray $\ell$, and for each $i$ choose a vector-valued solution $v_i(z)$ to the system  \eqref{bl} which is subdominant in the sector $\Sigma_i$. Then the $v_i(z)$ analytically continue  across $\ell$ to give a basis of solutions   in  a neighbourhood of $\ell$.
\smallskip

\item[(ii)]Suppose given a  pair of vector-valued solutions $v_1(z),v_2(z)$ to the system \eqref{bl} defined in the punctured disc $\Delta^*$, such that in each Stokes sector $\Sigma$  one of the two solutions $v_1(z),v_2(z)$ is subdominant. Then there exist holomorphic functions $\alpha_i\colon \Delta^*\to \bC^*$ such that the vector-valued functions $\alpha_i(z)\cdot v_i(z)$  extend over the origin and are linearly-independent for all $z\in \Delta$. \end{itemize}
\end{prop}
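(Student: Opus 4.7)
The plan is to work in the setup of the proof of Theorem \ref{d}: assume $m = 2q$ is even (the odd case reduces to this via the branched double cover used there), apply the meromorphic gauge change bringing \eqref{bl} to the form \eqref{may}, and use the unique formal solution $\hat{Y}(z) = F(z) \exp Q(z)$ with $F_0 = I$ and $Q(z)$ diagonal. Write $q_{11}(z), q_{22}(z)$ for the diagonal entries of $Q(z)$. The sign function $\epsilon(z)$ from the proof of Theorem \ref{d} records which of $\exp q_{11}(z), \exp q_{22}(z)$ decays as $z \to 0$, and it flips across each anti-Stokes ray.

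For part (i) I would proceed by comparing asymptotics. On $\Sigma_i$ the solution $v_i$ is, up to scalar, the column of the BJL distinguished fundamental matrix $Y_i(z)$ of \cite{BJL} whose exponential factor $\exp q_{ii}(z)$ decays in $\Sigma_i$. Because $\epsilon$ flips across $\ell$, the decaying column on $\Sigma_1$ corresponds to the growing column on $\Sigma_2$ and vice versa, so after continuing $v_2$ across $\ell$ into $\Sigma_1$ the two solutions $v_1, v_2$ are asymptotic to formal expressions with different leading exponential factors. They therefore cannot be proportional and form a basis of solutions in a neighbourhood of $\ell$. (Equivalently, the Wronskian $W(v_1, v_2)$ is constant on $\Delta^*$ since the coefficient matrix of \eqref{bl} is trace-free, and one exhibits it as non-zero using these asymptotics at a single point of $\Sigma_1$.)

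For part (ii) the plan is to construct $\alpha_i$ explicitly: take $\alpha_i(z) = \exp(-q_{ii}(z))$, choosing the branch of $\log z$ entering $q_{ii}$ so that the product $\alpha_i v_i$ is single-valued on $\Delta^*$. The crux will be to show that $\alpha_i v_i$ is bounded as $z \to 0$ in every Stokes sector. In a sector where $v_i$ is subdominant this is immediate from \eqref{fl2}, giving $\alpha_i v_i \to F(0) e_i = e_i$. In a sector $\Sigma$ where $v_i$ is not subdominant, I would expand $v_i$ in the distinguished fundamental basis of $\Sigma$ via the relevant Stokes matrix: the component along the subdominant direction of $\Sigma$ will be killed super-exponentially after multiplication by $\alpha_i$ (by a factor $\exp(q_{jj} - q_{ii})$ with $\Re(q_{jj} - q_{ii}) \to -\infty$), while the dominant component will yield the same asymptotic $\alpha_i v_i \sim F(z) e_i$ as in the subdominant case. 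Riemann's removable singularity theorem will then provide a holomorphic extension of $\alpha_i v_i$ to $\Delta$ with value $e_i$ at the origin. Linear independence at $z = 0$ is then immediate, and propagates to all of $\Delta$ because $\alpha_i$ is nowhere vanishing on $\Delta^*$ and $v_1, v_2$ are linearly independent at every point of $\Delta^*$ by part (i).

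The main obstacle I foresee is the bookkeeping of multivaluedness and Stokes matrices in part (ii). Both the single-valuedness of $\alpha_i v_i$ and the cancellations in the non-subdominant sector hinge on matching the actual monodromy of $v_i$ with the monodromy of the formal column $F(z) e_i \exp q_{ii}(z)$ of $\hat Y(z)$ once Stokes factors have been absorbed. This is intrinsic to the BJL normal form but will require careful articulation, with explicit tracking of which Stokes matrix appears in each non-subdominant sector and verification that its off-diagonal entries contribute only terms that $\alpha_i$ suppresses.
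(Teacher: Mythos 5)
Your overall framework (BJL normal form, distinguished fundamental solutions, comparison of asymptotics) is the same as the paper's, but in both parts the step you defer to ``bookkeeping'' is the entire content of the proof, and as written your argument is circular exactly there. In part (i), the fact that $v_2$ is asymptotic to the decaying formal column in $\Sigma_2$ tells you nothing by itself about its asymptotics after continuation into $\Sigma_1$: crossing an anti-Stokes ray is precisely where a solution can acquire a component along the other formal column, so ``$v_1,v_2$ are asymptotic to formal expressions with different leading exponential factors'' is the statement to be proved, not a consequence of the flip of $\epsilon$. The device that resolves this is to apply Theorem A of \cite{BJL} on an open sector $\Sigma_{12}$ of angle $\pi/(q-1)$ that \emph{straddles} $\ell$. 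On such a sector the fundamental solution satisfying \eqref{fl2} is \emph{unique} (two such differ by a constant $C$ with $\exp Q(z)\cdot C\cdot \exp(-Q(z))\to 1$, and each diagonal entry of $\exp Q(z)$ both grows and decays along different rays of $\Sigma_{12}$, forcing $C=1$). Each column of $Y_{\Sigma_{12}}$ decays in exactly one of $\Sigma_{12}\cap\Sigma_1$, $\Sigma_{12}\cap\Sigma_2$ and grows in the other, hence is proportional to $v_1$, respectively $v_2$, by the uniqueness in Theorem \ref{d}; thus $v_1,v_2$ are the two columns of a single fundamental matrix and (i) follows. Your version, which invokes fundamental matrices only on the Stokes sectors themselves (where they are not unique), cannot rule out a priori that the continuation of $v_2$ is again subdominant in $\Sigma_1$.

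In part (ii) the same gap recurs in sharper form: the claim that in a sector where $v_i$ is dominant one still has $\alpha_i v_i\sim F(z)e_i$ is false for a general solution that merely happens to be dominant there, since the coefficient along the dominant formal column can jump between sectors. What makes it true here is the hypothesis that the \emph{same} globally defined pair $v_1,v_2$ supplies the subdominant solution in \emph{every} Stokes sector, and the missing idea is that this hypothesis kills all the Stokes factors. Concretely: for consecutive straddling sectors one writes $Y_{\Sigma_{23}}=Y_{\Sigma_{12}}\cdot C$; since the columns of both matrices are constant multiples of $v_1$ and $v_2$, the matrix $C$ is diagonal, and then \eqref{fl2} together with the diagonality of $Q$ forces $C=1$. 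Going around the circle, the distinguished solutions glue to a single-valued fundamental solution $Y$ on $\Delta^*$ with $Y\exp(-Q)\to 1$; this simultaneously produces your $\alpha_i$ (as $\exp(-q_{ii})$ up to constants), establishes the single-valuedness of $\alpha_i v_i$ (which you cannot arrange merely by choosing a branch of $\log z$ when $(Q_0)_{ii}\notin\bZ$ --- it is a consequence of the gluing, not an input to it), and yields the holomorphic extension with value $e_i$ at the origin. Until you supply this gluing argument, the two asymptotic assertions at the heart of your proof remain unproved.
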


\begin{proof}
Consider an open sector $\Sigma$ of the $z$-plane of angle $\pi/(q-1)$ and a corresponding distinguished fundamental solution \eqref{dol}. The uniqueness argument \cite[Remark 1.4]{BJL} shows that if the sector $\Sigma$ is not a Stokes sector, then $Y(z)=Y_\Sigma(z)$ is unique with the property \eqref{fl2}. Indeed, any two such solutions would differ by a constant matrix $C$ satisfying
\[\exp Q(z)\cdot C\cdot \exp (-Q(z))\to 1 \text{ as } z\to 0 \text{ in }\Sigma.\]
But if $\Sigma$ contains an anti-Stokes ray then each entry of the diagonal matrix $\exp Q(z)$ exhibits both exponential growth and decay along different rays in $\Sigma$, which forces $C$ to be the identity.

To prove (i) consider an open sector $\Sigma_{12}$ of angle $\pi/(q-1)$ overlapping both $\Sigma_1$ and $\Sigma_2$, and a column $y_i(z)$ of the corresponding fundamental solution $Y_{\Sigma_{12}}(z)$. This defines a vector-valued solution which decays exponentially in one of the Stokes sectors $\Sigma_i$ but grows exponentially in the other, which by the uniqueness of the subdominant solution immediately implies (i).

Assume now that we are in the situation of (ii). By the first part the number of Stokes sectors must be even. Consider three Stokes sectors $\Sigma_1,\Sigma_2,\Sigma_3$ in clockwise order. As in the last paragraph, consider an open sector $\Sigma_{12}$ intersecting both $\Sigma_1$ and $\Sigma_2$, and similarly a sector $\Sigma_{23}$ intersecting both $\Sigma_2$ and $\Sigma_3$. We can further assume that $\Sigma_{12}$ intersects $\Sigma_{23}$, and thus write
\[Y_{\Sigma_{23}}(z)=Y_{\Sigma_{12}}(z)\cdot C, \qquad z\in \Sigma_{12}\cap \Sigma_{23}\]
where $C$ is a constant matrix.
 Each of the columns of $Y_{\Sigma_{12}}(z)$ has exponential decay in one of the sectors $\Sigma_1,\Sigma_2$, and hence is a constant multiple of  either $v_1(z)$ or $v_2(z)$. The same argument applies to $Y_{\Sigma_{23}}(z)$, and we conclude that $C$ is diagonal. But since $Q(z)$ is also diagonal, the asymptotic condition \eqref{fl2} 
 implies that $C=1$. 

Continuing around the circle in this way we see that all the distinguished solutions $Y_\Sigma(z)$ for non-Stokes sectors $\Sigma$ agree under analytic continuation and hence define a single-valued solution $Y(z)$ on the punctured disc $\Delta^*$ whose columns are given by constant multiples of $v_1(z)$ and $v_2(z)$. The condition \eqref{fl2} then implies that the matrix-valued function  $Y(z)\cdot \exp(-Q(z))$ extends holomorphically over the origin and returns the identity matrix at $z=0$. Taking the columns of this matrix gives the result.
\end{proof}

\section{The monodromy framed local system}
\label{locale}
In this section we define the monodromy framed local system of  a signed meromorphic projective structure without apparent singularities, and prove the crucial result that it is non-degenerate. We do this by combining  the local analysis of the last section with global properties  of the vector bundle $E$ underlying the corresponding meromorphic oper.

\subsection{Construction of the framed local system}

Consider a meromorphic projective structure $\cP$ on a Riemann surface $S$ having at least one pole. In Section \ref{la} we explained how associate to the pair $(S,\cP)$ a marked bordered surface $(\bS,\bM)$. When $\cP$ is equipped with a signing, and has no apparent singularities, we now explain how to construct a natural framed local system on $(\bS,\bM)$, which we call the \emph{monodromy framed local system} of the pair $(S,\cP)$.

 Let $S^*\subset S$ be the complement of the poles of $\cP$. Thus $\cP$ restricts to give a holomorphic projective structure on $S^*$. As explained in Section \ref{sta}, there is an associated monodromy $G$ local system $\cG$ on the surface $S^*$, whose space of sections over a contractible open subset $U\subset S^*$ is the set $\cG(U)$ of projective charts $\phi\colon U\to \bP^1$ in the projective structure  $\cP$. 
This local system $\cG$ can also be identified with the projective frame bundle of the associated oper $(E,L,\nabla)$. It follows that the $\bP^1$-bundle
\[\cL(U)=\cG(U)\times_G \bP^1,\]
is the projective bundle $\bP(E)$ with its induced connection.

As explained above, if we choose a base projective structure on $S$, and a local chart $z\colon U\to \bC$, the projective charts of $\cP$ defined on $U$ are given by ratios of linearly independent solutions to the vector-valued  differential equation
\begin{equation}
\label{abov}\frac{d}{dz}\vect{s(z)}{t(z)}+\mat{0}{1}{\varphi(z)}{0} \vect{s(z)}{t(z)}=0.\end{equation}
The sections of $\cL$ are then given by one-dimensional spaces of solutions to this equation.

The interior of the surface $\bS^*=\bS\setminus \bP$ can be identified with the complement  $S^*\subset S$ of the poles of $\cP$  considered above. Thus we obtain a $G$ local system $\cG$ on $\bS^*$. The argument above shows that flat sections of the associated $\bP^1$ bundle \eqref{l} can be identified with  nonzero solutions to the equation \eqref{abov} up to scale.

Suppose now that $\cP$ is equipped with a signing and has no apparent singularities. Then we have a choice of sign of the exponent $r(p)$ at each regular pole $p$. This is equivalent, by Proposition~\ref{above}, to a choice of eigenvalue of the monodromy around~$p$. Since this monodromy is not the identity, the associated eigenspace is one-dimensional and defines a section of $\cL$ in a neighborhood of~$p$. On the other hand, Theorem \ref{d} shows that near an irregular pole of $\cP$ there is a unique up-to-scale  solution of \eqref{abov} in each Stokes sector, which gives rise to a flat section of the bundle $\cL$ near the corresponding marked point on $\partial \bS$. Putting everything together gives the required monodromy framed local system on $(\bS,\bM)$.

\subsection{Global description}
\label{ev}

  We now want to prove that the monodromy framed local system constructed in the last section is always non-degenerate. Suppose given a meromorphic projective  structure $\cP$ with poles $\{p_1,\cdots, p_d\}\subset S$ all of which have even order. Let $m_i=2q_i$ be the order of the pole of $\cP$ at the point $p_i$. There are divisors on~$S$ defined as 
\[P=\sum_i p_i, \qquad Q=\sum_i q_i p_i, \qquad D= \sum_i m_i p_i.\]
Let us write $\cP=\cP_0+ \phi$, with $\cP_0$ a holomorphic projective structure, and
\[\phi\in H^0(S,\omega_S^{\tensor 2}(D)),\]
a quadratic differential with polar divisor $D$. The holomorphic projective structure $\cP_0$ has an associated oper $(E,L,\nabla_0)$ well-defined up to tensoring with line-bundles with connection $(M,\nabla_M)$. Here $E$ is a rank 2 vector bundle, and $L$ is a line sub-bundle fitting into a short exact sequence
\[0\lra L\lra E\stackrel{q}{\lra} L\tensor \omega_S^*\lra 0,\]
Since $E$ has a flat connection it has degree 0. But $\omega_S$ has degree $2g-2$ so it follows that $L$ must have degree $g-1$.

 As in Section \ref{meroper} we then introduce a meromorphic connection $\nabla=\nabla_0+\alpha(\phi)$ which in a local basis of flat sections of $\nabla_0$  is given by
\[\nabla+\alpha(\phi)=d+\mat{0}{1}{\varphi(z)}{0} dz.\]
Note that this coincides with \eqref{bl}. We must now understand the meromorphic gauge change leading to \eqref{q} in global terms.
We consider  the usual twisted line bundle \[ L(-Q)=L\tensor_{\O_S} \O_S(-Q),\]
which we view  as a sub-bundle $L(-Q)\subset L$.  Define a subsheaf 
\[F=\big\{s\in E: q(s)\in L(-Q)\tensor \omega_S^*\subset L\tensor \omega_S^*\big\}\subset E.\]
We then obtain a diagram in which the vertical arrows are inclusions
\begin{equation}
\begin{aligned}
\label{exact}\xymatrix{ 
0\ar[r] &L\ar[d]^{\id}\ar[r] &F\ar[d]^{\mu}
 \ar[r] &L(-Q)\otimes\omega_S^*\ar[r]\ar[d]^{\nu}&0\\
0\ar[r] &L \ar[r]  &E \ar[r] &L\otimes\omega_S^*\ar[r]&0.
}
\end{aligned}
\end{equation}

We are interested in meromorphic bundles on $S$ with poles along the divisor $P$ (see for example \cite[Chapter 0, Sections 8 and 14]{Sabbah}), so we introduce the sheaf of rings $\O_S(*P)$ whose local sections are meromorphic functions on $S$ with poles at the points $p_i$. 
The map $\nu$ obviously becomes an isomorphism after base-change along the canonical inclusion $\O_S\to \O_S(*P)$, so the diagram \eqref{exact} shows that the same is true for the map $\mu$. In other words, the holomorphic bundles $F$ and $E$ are different lattices in the same meromorphic bundle.

We claim that the induced meromorphic connection on the holomorphic bundle $F$ has the local description \eqref{q}. Indeed, we considered before, in the proof of Lemma \ref{plob}, a local basis $(u(z),v(z))$ for the bundle $E$, with $v(z)$ spanning the sub-bundle $L$. With respect to this basis the connection $\nabla$ took the form \eqref{opereq}.  The gauge change leading to \eqref{q} corresponds to replacing this basis with $( z^{q} \cdot u(z),v(z))$ which is precisely a basis for $F$.

 \subsection{Non-degeneracy}
 
 The aim of this section is to show that the monodromy of a signed meromorphic projective structure without apparent singularities is a non-degenerate framed local system.

\begin{thm}
\label{nondeg}
Let $(\bS,\bM)$ be a marked bordered surface, and if $g(\bS)=0$ assume that $|\bM|\geq 3$.  Then the monodromy framed local system of a signed meromorphic projective structure without apparent singularities is non-degenerate.
\end{thm}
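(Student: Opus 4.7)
The plan is to treat the three conditions (D1), (D2), (D3) of Definition \ref{rn} in turn, combining the local analysis of Section 5 with the global structure of the modified oper bundle $F$ introduced in Section \ref{ev}.

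Condition (D1) follows directly from Proposition \ref{e}(i). A connected component $I\subset \partial\bS\setminus\bM$ joins two marked points on the boundary which, under the identification of Section \ref{la}, correspond to a pair of adjacent Stokes sectors at a single irregular singularity of $\cP$. The framings at the endpoints of $I$ are precisely the unique up-to-scale subdominant solutions in those sectors, and Proposition \ref{e}(i) asserts that they analytically continue to a basis of solutions across the separating anti-Stokes ray. They therefore project to distinct lines in the fibre of $\cL$ after parallel transport along $I$.

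For (D2), I would argue by contradiction via a degree computation. After passing, where needed, to the unramified double cover $\pi\colon \bT\to \bS$ of Remark \ref{remas}(iv), the pair $\{\ell_+,\ell_-\}$ lifts to a pair of disjoint globally-defined flat sections of $\bP(\pi^*E)$ on $\bT^*$, producing a flat direct-sum decomposition of $\pi^*E$ into line sub-bundles over the complement of the poles. The crucial step is to check that this decomposition extends across all poles to a global holomorphic splitting $\pi^*F = M_+ \oplus M_-$. At an irregular singularity of even order $2q$, the subdominant framings alternate between two eigenlines of the leading diagonal term in the local form \eqref{q}, and Proposition \ref{e}(ii) identifies $\{\ell_+,\ell_-\}$ locally with these eigenlines, which extend as holomorphic sub-bundles of $F$. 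Irregular poles of odd order are reduced to the even case by the branched double cover used at the end of the proof of Theorem \ref{d}. At a regular singularity, the signing together with the no-apparent-singularity hypothesis ensures that the monodromy is semisimple with distinct eigenvalues and that $\ell(p)$ is one of its eigenlines, so again $\{\ell_+,\ell_-\}$ extends holomorphically. Once the splitting is established, the pullback $\tilde L \subset \pi^*F$ of the oper line $L\subset F$ cannot be contained in either summand $M_\pm$, since otherwise $\tilde L$ would be flat and the oper map $\theta$ of \eqref{theta} would vanish, contradicting the oper condition. Projecting $\tilde L$ onto each factor gives $\deg\tilde L \leq \deg M_\pm$ and hence $2\deg \tilde L \leq \deg \pi^*F$. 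Combined with the formula $\deg F = -\sum q_i$ obtained from \eqref{exact} and a Riemann--Hurwitz analysis of $\pi$, this inequality will contradict the hypothesis $g(\bS)\geq 1$, or $|\bM|\geq 3$ in the case $g(\bS)=0$.

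Condition (D3) is handled by the same strategy, working directly on $\bS$ with no cover needed: the single global flat section $\ell$ produces a holomorphic line sub-bundle $M_+\subset F$ (extension at each puncture is guaranteed by the parabolic-or-identity condition, which pins down $M_+$ as an eigenline of the residue), and a direct comparison of $\deg L$ with $\deg M_+$ inside $F$ gives the contradiction. The main obstacle I anticipate is the careful book-keeping required for the degree computation in (D2): one has to track three separate covering constructions at once — the auxiliary branched double cover absorbing odd-order poles, the cover $\pi\colon\bT\to \bS$ of Remark \ref{remas}(iv), and the possible ramification of $\pi$ over punctures at which the permutation representation \eqref{lablab} is nontrivial — before Riemann--Hurwitz can be used to turn the topological hypothesis of the theorem into the required contradictory inequality.
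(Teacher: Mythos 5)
Your treatment of (D1) coincides with the paper's, and your treatment of (D2) follows the same global strategy (pass to the unramified double cover of Remark \ref{remas}(iv), extend the two flat sections to a holomorphic splitting of the lattice $F$ across the poles using Proposition \ref{e}(ii) at irregular singularities and non-resonance at regular ones), differing only in the endgame: you project the oper line onto both summands to get $2\deg \tilde L\leq \deg M_++\deg M_-=\deg \pi^*F$ and derive a numerical contradiction, whereas the paper observes that the splitting forces $F$ to be unstable with two competing Harder--Narasimhan filtrations, so that one summand must equal $L$, contradicting the oper condition. Both endgames are valid and of comparable difficulty. One small simplification you miss: under (D2) the opposedness property already forces every irregular pole to have even order (the subdominant framings must alternate between $\ell_+$ and $\ell_-$ around the boundary circle), so no auxiliary branched double cover is needed; indeed a local branched cover at a single pole cannot be patched into the global degree computation anyway.

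The genuine gap is in your treatment of (D3). There you have only \emph{one} flat section $\ell$, hence only one flat line subbundle $M_+\subset F$, and no splitting. The entire force of the (D2) argument comes from the identity $\deg M_++\deg M_-=\deg F$, which is unavailable here. A ``direct comparison of $\deg L$ with $\deg M_+$'' yields only the upper bound $\deg M_+\leq \deg L-\deg\omega_S(Q)$ (from the nonvanishing of the composite $M_+\to F\to F/L$, which holds unless $M_+=L$, in which case the oper condition is violated); there is no complementary lower bound, because $\deg M_+=-\sum_i\lambda_i$ where $\lambda_i$ is the residue exponent of the induced logarithmic connection on the chosen eigenline at $p_i$. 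The parabolicity condition only constrains each $\lambda_i$ to lie in $\tfrac{1}{2}\bZ$; these exponents are determined by the leading coefficients of the projective structure and are unbounded, so no topological contradiction results. This is precisely why the paper does \emph{not} run a degree argument for (D3): it instead observes that (D3) together with the absence of apparent singularities forces the monodromy representation to be reducible with all puncture monodromies parabolic, and invokes Faltings \cite[Theorem 7]{Falt} to rule this out --- the authors explicitly remark that they were unable to avoid this appeal. You would need either to import Faltings' theorem or to supply a genuinely new argument (e.g.\ showing that the (D3) locus lies in the closure of the (D2) locus, as the paper suggests) to close this case.
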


\begin{proof}
Consider a signed meromorphic projective structure $\cP$ without apparent singularities.   Let $(\cG,\ell)$ be the associated framed local system. We must eliminate the three types of degenerate framed local systems appearing in Definition \ref{rn}. Note that the first one, condition (D1), cannot hold by  the opposedness property Proposition \ref{e}(i).  Condition (D3) is only possible when $\bS$ is a closed surface, so that all singularities of the projective structure are regular. But since we have assumed that $\cP$ has no apparent singularities, condition (D3) then implies that the monodromy representation is reducible and  all puncture monodromies are parabolic. This  contradicts  a result of Faltings \cite[Theorem 7]{Falt}.
Thus it remains to show that condition (D2) does not hold.

Let us first suppose for a contradiction that  we can find two distinct globally defined flat sections  $\ell_1,\ell_2$  of the $\bP^1$ bundle $\cL$ over the punctured surface $\bS^*$ which restrict to give all framings. In particular, by the opposedness property Proposition \ref{e}(i), this implies that all poles have even order. Consider the corresponding meromorphic connection $(F,\nabla)$ of Section \ref{ev}. The sections $\ell_i$ define line subbundles $L_i\subset F$ in the complement of the poles of $\nabla$. We claim that these subbundles extend over the poles and give a global splitting $F=L_1\oplus L_2$.

Let $p\in S$ be a pole of the projective structure. If $p$ is an irregular pole the claim follows immediately from Theorem \ref{e}(ii).
When $p$ is regular, the fact that  both sections $\ell_i$ are well-defined on the surface $\bS^*$ and hence preserved by the monodromy around $p$ shows that this monodromy is diagonalisable. Our assumption that all such monodromies are non-trivial therefore implies that the eigenvalues of the residue matrix do not differ by an integer, so  that the connection is non-resonant, and via a holomorphic gauge transformation we can put it in the form
\[\nabla=d+\frac{A \ dz}{z},\]
with $A$ a constant diagonal matrix. The claim is then clear, since in this gauge the monodromy invariant sections are the standard basis vectors of $\bC^2$.

 The assumption that $\bS$ is not a sphere with fewer than three marked points implies that $\omega_S(Q)$ has positive degree. It follows that in the top line of \eqref{exact} we have
\[\deg  L > \deg L(-Q)\otimes\omega_S^*=\deg L -\deg \omega_S(Q).\] 
This implies that the bundle $F$ is unstable, and that $0\subset L\subset F$ is a Harder-Narasimhan filtration. For basic properties of Harder-Narasimhan filtrations see  for example \cite[Section 5]{LeP}.
 We cannot  have $\deg L_1=\deg L_2$, since this would imply that $F$ were semistable, giving a contradiction. Thus without loss of generality we can assume $\deg L_1>\deg L_2$. But then $0\subset L_1\subset F$ is another Harder-Narasimhan filtration for $F$. By uniqueness of such filtrations it follows that $L_1=L$. This implies that, at least away from the poles,  the sub-bundle $L\subset E$ is invariant under $\nabla$, and  this  contradicts  the oper condition of Remark \ref{snobl}(i).

Suppose  for a contradiction that the monodromy framed local system satisfies condition (D2). As in Remark \ref{remas} (iv) we obtain a permutation representation
\begin{equation}
\label{lablab}\sigma\colon \pi_1(\bS)\to \{\pm 1\}.\end{equation}
If this representation is trivial, the monodromy framed local system satisfies the stronger version of condition (D2) we considered above: there exist two distinct globally defined flat sections  $\ell_1,\ell_2$  of the bundle $\cL$ over $\bS^*$ which restrict to give all framings. Thus we can assume that  $\sigma$ defines an unramified double cover \begin{equation}
\label{done}\pi\colon \tilde{\bS}\to \bS,\end{equation} and setting $\tilde{\bM}=\pi^{-1}(\bM)$ form a marked bordered surface $(\tilde{\bS},\tilde{\bM})$. Contracting the boundary components of $\tilde{\bS}$ gives rise to a double cover  of the Riemann surface $\varpi\colon \tilde{S}\to S$. Note that the  set of branch points of this cover is contained in the set of irregular poles of $\cP$, since the two sections $\ell_1,\ell_2$ are well-defined in a neighbourhood of each puncture. 

The meromorphic projective structure $\cP$ induces a projective structure $\tilde{\cP}=\varpi^*(\cP)$ on the surface $\tilde{S}$, and since $\varpi$ is not branched at the regular singularities, the signing of $\cP$ immediately induces a signing of $\tilde{\cP}$. One can then check that the monodromy framed local system of the signed projective structure $\tilde{\cP}$  is  the pullback of the monodromy framed local system of the signed projective structure $\cP$ along the double cover \eqref{done}. But this pulled-back framed local system satisfies the stronger version of (D2), since by definition of the cover \eqref{done} the flat sections $\ell_1,\ell_2$ are globally-defined on~$\tilde{\bS}$. Applying the argument given above to the signed meromorphic projective structure $\tilde{\cP}$   therefore gives a contradiction.
\end{proof}

\begin{remark}
One could avoid the appeal to Faltings' result if one could prove that the subset of rigidified framed local systems satisfying condition (D3) of Definition \ref{rn} lies in the closure of the subset satisfying condition (D2). Indeed, since the monodromy map of $F$ is holomorphic (as we will soon show), the complement of its image is necessarily closed.
\end{remark}

\subsection{Special cases}
\label{remark}

It is instructive to examine the case of marked bordered surfaces $(\bS,\bM)$ with $g(\bS)=0$ and $|\bM|<3$ which are excluded by  
the statement of Theorem \ref{one}.  In each case, using the standard projective structure on $\bP^1$, we can identify the space of projective structures with the corresponding space of meromorphic quadratic differentials.

There are  two cases when $g(\bS)=0$ and $|\bM|=1$: the once-punctured sphere and the disc with one marked point on the boundary. In these cases it is easy to see that both spaces $\cP(\bS,\bM)$ and $\cX^*(\bS,\bM)$ are empty. The corresponding meromorphic quadratic differentials  would have a single pole of order $m=2$ or 3, but no such differentials exist, since they would define sections of the line bundle
\[\omega_{\bP^1}^{\tensor 2}(m)=\O_{\bP^1}(-4+m),\]
which has negative degree.
On the other hand, framed local systems in these two cases always satisfy condition (D2) of Definition \ref{rn}, since the surface $\bS^*$ is contractible, and there is only one marking $\ell=\ell(p)$.

There are four cases when $g(\bS)=0$ and $|\bM|=2$, two of which are genuinely degenerate and are discussed in Examples \ref{remarks} below. The two remaining cases are the once-punctured disc with one marked point on the boundary and the annulus with one marked point on each boundary component.  The statement of Theorem \ref{nondeg} in fact holds without modification in these cases, and the proof requires only a small additional argument as we now explain.

Since these two surfaces are not closed we just have to check that condition (D2) cannot hold. The stronger  version of this condition   that  we considered in the proof of Theorem \ref{nondeg} cannot hold because the surfaces $(\bS,\bM)$ have boundary components with an odd number of marked points. Thus the monodromy representation \eqref{lablab} must be non-trivial. In the case of the once-punctured disc with one marked point on the boundary this already gives a contradiction, since $\bS$ is contractible. In the other case  the covering surface is an annulus with with two marked points on each boundary component, so the argument  of Theorem \ref{nondeg} applies.

\begin{examples}
\label{remarks}
There are two degenerate cases which need to be excluded from the statement of Theorem \ref{one}. The basic problem with these examples is that the stacks $\cX(\bS,\bM)$ and $\cP(\bS,\bM)$ have non-trivial generic automorphism group. 

\begin{itemize}
\item[(a)]
Suppose that $(\bS,\bM)$ is the twice-punctured sphere. The corresponding space of marked projective structures corresponds to the quadratic differentials $\phi(z)=a dz^{\tensor 2}/z^2$ with $a\in \bC^*$, which have two  poles of order two. Each point has a  $\bC^*$ automorphism group given by rescaling the co-ordinate $z$.  
A framed local system on $(\bS,\bM)$ in this case consists  of a single element of $G$ equipped with a choice of  two (not necessarily distinct) invariant lines. Each of these framed local systems has non-trivial automorphism  group, and all of them are degenerate.
\smallskip

\item[(b)]
Suppose that $(\bS,\bM)$ is the unpunctured disc with two marked points on the boundary.  The corresponding space of meromorphic quadratic differentials consists of the single quadratic differential  $\phi(z)=dz^{\tensor 2}$, which has a single pole of order four at infinity. The resulting marked projective structure has  automorphism group $\bC$ given by translating the co-ordinate~$z$.  A framed local system on $(\bS,\bM)$ in this case consists of a pair of unordered points of $\bP^1$ considered up to the action of $G$. There are two isomorphism classes:  both are degenerate and have non-trivial automorphism groups.
\end{itemize}
\end{examples}

\section{Families of projective structures}

In this section we study families of holomorphic and meromorphic projective structures. This  will be necessary to define complex manifolds parameterising projective structures in the next section.

 \subsection{Families of holomorphic projective structures}
 
 Recall that  a \emph{family of  Riemann surfaces} is defined to be a  holomorphic map of complex manifolds $\pi\colon X\to B$ which is everywhere submersive, and whose fibres $X(b)=\pi^{-1}(b)$ have dimension one.
 A \emph{relative projective atlas} for such a family consists of an open cover $X=\bigcup_{\alpha\in A} U_\alpha$ and a  collection of holomorphic maps $f_\alpha\colon U_\alpha \to \bP^1$, such that if for any $b\in B$ we define \[U_\alpha(b)=U_\alpha\cap X(b), \quad f_\alpha(b)=f_\alpha|_{U_\alpha(b)}\colon U_\alpha(b)\to f_\alpha(U_\alpha(b)),\]  then the data $(U_\alpha(b),f_\alpha(b))$ defines a  projective atlas for the Riemann surface $X(b)$.

Clearly, a relative projective atlas  on a family of Riemann surfaces $\pi \colon X\to B$ induces projective structure $\cP(b)$ on each of the Riemann surfaces $X(b)$.
We define a
 \emph{family of projective structures} on  $\pi\colon X\to B$ to be a collection of projective structures $\cP(b)$ on the fibres $X(b)$ which is induced by a relative projective atlas in this way.

We now consider a relative version of Proposition \ref{fun}. We assume that the map $\pi$ is proper, so that the fibres $X(b)$ are compact.  There is then a vector bundle \[q \colon \cQ(X/B)\to B\] whose fibre over a point $b\in B$ is the vector space of global quadratic differentials \[\cQ(X/B)_b=H^0\big(X(b),\omega_{X(b)}^{\tensor 2}\big).\]

Let  $\cP(X/B)$ denote the the set of pairs $(b,\cP(b))$ consisting of a point $b\in B$ and a projective structure $\cP(b)$  on the corresponding surface $X(b)$. There is an obvious forgetful map \[p\colon \cP(X/B)\to B,\]
and to give a set-theoretic section of this map is the same thing as to give a collection of projective structures $\cP(b)$ on the fibres $X(b)$.

\begin{prop}[\cite{Hub}]\label{projst}Let $\pi\colon X\to B$ be a proper family of Riemann surfaces. Then there is a unique complex manifold structure on the set  $\cP(X/B)$  with the following properties:\begin{itemize}
\item[(i)] The map $p\colon \cP(X/B)\to B$ is an affine bundle for the vector bundle $q \colon \cQ(X/B)\to B$.
\smallskip

\item[(ii)] Holomorphic sections of the affine bundle $p$  correspond precisely to families of projective structures on $\pi\colon X\to B$.
\end{itemize}
\end{prop}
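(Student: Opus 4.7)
The strategy is to reduce the statement to two sub-problems: the existence of local holomorphic sections of $p$, and the holomorphicity of the Schwarzian-derivative difference between two such sections. The fibrewise affine structure coming from Proposition \ref{fun} will then automatically assemble into a holomorphic affine bundle, and both properties (i) and (ii) as well as uniqueness will follow formally.

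\textbf{Step 1 (local sections).} The principal step is to show that every $b_0\in B$ admits an open neighbourhood $U\subset B$ carrying a relative projective atlas on $\pi^{-1}(U)\to U$; equivalently, a set-theoretic section $s\colon U\to p^{-1}(U)$ such that $s(b)$ is induced by a relative atlas on each fibre. On a sufficiently small (in particular, Stein) $U$ this may be constructed either by invoking Bers-type simultaneous uniformization, or more directly by trivialising $\pi$ in a fibrewise-holomorphic way and absorbing the failure of the transition functions to be M\"obius into a holomorphic family of quadratic differentials, possible by the affine action of Proposition~\ref{fun}. This is the content of Hubbard's treatment, and it is the one genuinely analytic ingredient in the argument.

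\textbf{Step 2 (charts).} Given a local section $s\colon U\to p^{-1}(U)$, the fibrewise affine structure defines a bijection
\[\Psi_s\colon p^{-1}(U)\longrightarrow q^{-1}(U), \qquad (b,\cP(b))\longmapsto \bigl(b,\;\cP(b)-s(b)\bigr),\]
which we declare to be a biholomorphism, transporting the complex structure of the open subset $q^{-1}(U)\subset \cQ(X/B)$ onto $p^{-1}(U)$. Compatibility between two such charts $\Psi_{s_1}$ and $\Psi_{s_2}$ amounts to checking that $\Psi_{s_2}\circ\Psi_{s_1}^{-1}$, which is translation by the fibrewise difference $s_1-s_2$, is holomorphic. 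But by the Schwarzian formula \eqref{schwarz}, applied to the charts of the two local atlases (which themselves vary holomorphically in $b$), the assignment $b\mapsto s_1(b)-s_2(b)$ is a holomorphic section of the vector bundle $q$. Hence the transition maps are affine-linear translations by holomorphic sections of $q$, so the charts $\Psi_s$ fit together into a well-defined complex manifold structure on $\cP(X/B)$.

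\textbf{Step 3 (properties and uniqueness).} In any chart $\Psi_s$ the map $p$ becomes the restriction of $q$ and the fibrewise action of $\cQ(X/B)$ becomes ordinary linear translation, establishing (i). For (ii), a holomorphic global section of $p$ is by construction one that in each chart $\Psi_s$ differs from $s$ by a holomorphic section of $q$; unwinding the affine action of quadratic differentials on projective structures from Proposition \ref{fun} identifies this data with a relative projective atlas, i.e.\ a family of projective structures. Uniqueness follows since any complex manifold structure on $\cP(X/B)$ satisfying (i) and (ii) must render the $\Psi_s$ biholomorphisms onto open subsets of $\cQ(X/B)$, and by Step 1 such charts cover $\cP(X/B)$. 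The expected main obstacle is Step 1: once a single relative projective atlas is obtained near every point of $B$, the rest is a formal consequence of the affine structure of Proposition \ref{fun} and the naturality of the Schwarzian derivative.
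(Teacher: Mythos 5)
The paper offers no proof of this proposition: it is quoted verbatim from Hubbard's paper \cite{Hub}, and the authors refer the reader there for details. Your outline is exactly the standard (Hubbard) argument: once local relative projective atlases exist, the fibrewise affine structure of Proposition \ref{fun} plus the holomorphy of the Schwarzian difference of two holomorphically varying atlases makes the charts $\Psi_s$ into an affine-bundle atlas, and (i), (ii) and uniqueness follow formally just as you say. So the decomposition is correct and matches the source; the only substantive content is your Step 1, which you rightly identify as the sole analytic input and which the paper likewise outsources to \cite{Hub}.

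One caveat on Step 1 itself: of your two suggested constructions, the second does not work as stated. A proper holomorphic family of Riemann surfaces is locally $C^\infty$-trivial but in general not locally holomorphically trivial (for $g\geq 2$ this would force the classifying map to moduli to be locally constant), so there is no ``fibrewise-holomorphic trivialisation'' whose transition functions one could then correct by quadratic differentials; and even granting some trivialisation, producing charts into $\bP^1$ from it is precisely the problem to be solved, so the suggestion is close to circular. The route that actually works is the first one: Bers simultaneous uniformization (or Hubbard's own construction in \cite[Proposition 1]{Hub}) for $g\geq 2$, with the genus $0$ and $1$ cases handled directly (Fischer--Grauert local triviality for $g=0$; the relative universal cover by $\bC$ for $g=1$). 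With Step 1 supplied by that reference, the rest of your argument is complete; I would only add the small remark that identifying a fibrewise Schwarzian difference, holomorphic in $(b,z)$, with a holomorphic section of the direct image bundle $\cQ(X/B)$ uses that this direct image is indeed a vector bundle (constancy of $h^0(\omega_{X(b)}^{\otimes 2})$), which is implicit in the statement.
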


It is possible to show that the complex manifold $\cP(X/B)$ defined by Proposition \ref{projst} has 
a certain universal property and can therefore be viewed as a fine moduli space, but we shall not need this point of view here; details can be found in \cite{Hub}.

\subsection{Relative developing maps}

Suppose $S$ is a Riemann surface equipped with a projective structure $\cP$. If $S$ is simply-connected one can easily show \cite[Lemma 1]{Hub} that there is a global projective chart, or in other words, a holomorphic map
\[f\colon S\to \bP^1\]
which lies in the projective atlas $\cP$. Such a global chart is the same thing as a developing map in this case and is unique up to composition with an element of $G=\Aut(\bP^1)$.
If a discrete group $\Gamma$ acts on $S$ preserving the projective structure, it follows that there is a group homomorphism
\begin{equation}
\label{mondrom}\rho\colon \Gamma\to G, \qquad f(\gamma\cdot x)=\rho(\gamma)\cdot f(x).\end{equation}

Given a projective structure on an arbitrary Riemann surface $S$, we can pull back to get a projective structure on the universal covering surface, invariant under the action of $\Gamma=\pi_1(S)$. A global chart on the cover is nothing but a developing map, as defined before, and the group homomorphism \eqref{mondrom} is the associated monodromy representation
\[\rho\colon \pi_1(S)\to G.\]

Now let $\pi\colon X\to B$ be a family of Riemann surfaces, and assume that the base $B$ is contractible and Stein. If
 $X$ is simply-connected, and $\cP(b)$ is a family of projective structures on the fibres $X(b)$, Hubbard showed \cite[Section 6]{Hub} that there exists  a global relative  projective chart, in other words, a holomorphic map
\[f\colon X\to \bP^1\]
such that for each $b\in B$ the restriction $f(b)\colon X(b)\to \bP^1$ is a chart in the projective structure $\cP(b)$. The map $f$ is called a \emph{relative developing map} and is unique up to composition with a map $B\to G=\Aut(\bP^1)$. Of course each map $f(b)$ is a developing map for the projective structure $\cP(b)$.

Suppose again that a group $\Gamma$ acts on $X$ preserving the map $\pi\colon X\to B$ and the projective structures  $\cP(b)$ on the fibres $X(b)$. Then, having chosen a relative developing map $f$, we obtain a holomorphic map
\[\rho\colon B\times \Gamma \to G, \quad f(\gamma\cdot x)=\rho(\pi(x),\gamma)\cdot f(x).\]
The restrictions $\rho(b,-)\colon \Gamma\to G$ are the representations considered before.

Given an arbitrary family of Riemann surfaces with a  family of projective structures,  we can first restrict to a small open ball in $B$, and then pull back to the universal cover of $X$, before applying the results above. In this way we see that the monodromy representations vary holomorphically with $b\in B$.

\subsection{Families of meromorphic projective structures}
\label{fam}

 Suppose again that $\pi\colon X\to B$ is a proper family of  Riemann surfaces. Let us now suppose that we have chosen $d\geq 1$ disjoint holomorphic sections \[p_i\colon B\to X, \qquad \pi\circ p_i=\id_B,\qquad i=1,\cdots,d,\] so that each Riemann surface $X(b)$ has $d$ ordered marked points $p_i(b)$ on it.
Let us also choose $d$ positive integers $m_i>0$ and introduce  the effective divisor
\[D=\sum_i m_i  D_i, \qquad D_i=p_i(B)\subset X.\]
 It restricts to give a divisor
 $D(b)=\sum_i m_i p_i(b)$ on each surface $X(b)$.
 There is a vector bundle \begin{equation}
\label{disp}q\colon \cQ(X/B;D)\to B\end{equation} whose fibre over a point $b\in B$ is the vector space  \[\cQ(X/B;D)_b=H^0\big(X(b),\omega_{X(b)}^{\tensor 2}(D(b))\big)\] consisting of meromorphic quadratic differentials on $X(b)$ having poles of degree $\leq m_i$ at the points $p_i(b)$ and no other poles. We can form an affine bundle \[\cP(X/B;D)=\cP(X/B)\oplus_{\cQ(X/B)} \cQ(X/B;D)\]
 for this  vector bundle  by performing the construction of Lemma \ref{triv} fibrewise. The following result is then immediate.

\begin{lemma}
The points of $\cP(X/B;D)$ are in bijection with the set of pairs $(b,\cP(b))$ where $b\in B$ and $\cP(b)$ is a meromorphic projective structure on the fibre $X(b)$ having poles of degree $\leq m_i$ at the points $p_i(b)$ and no others.
\end{lemma}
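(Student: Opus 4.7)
The plan is to reduce the statement to a fibrewise application of Lemma~\ref{triv}. By the very definition of $\cP(X/B;D)$ as the fibrewise $\bA\oplus_V W$ construction, the fibre of the projection $\cP(X/B;D)\to B$ over a point $b\in B$ is canonically the affine space $\cP(X/B)_b\oplus_{\cQ(X/B)_b}\cQ(X/B;D)_b$. Via the identification $\cP(X/B)_b\cong\cP(X(b))$ supplied by Proposition~\ref{projst}(i) and the tautological identifications $\cQ(X/B)_b\cong H^0(X(b),\omega_{X(b)}^{\tensor 2})$ and $\cQ(X/B;D)_b\cong H^0(X(b),\omega_{X(b)}^{\tensor 2}(D(b)))$, this fibre becomes the affine space built by the formal construction of Lemma~\ref{triv} applied to the inclusion of holomorphic quadratic differentials on $X(b)$ inside the space of meromorphic quadratic differentials with polar divisor bounded by $D(b)$.

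Next I would invoke Lemma~\ref{triv} itself: it identifies this affine space with the set of meromorphic projective structures on $X(b)$ whose polar differential has poles of order at most $m_i$ at the points $p_i(b)$ and no other poles. Concatenating this bijection fibrewise over $b\in B$ yields the claimed global bijection between points of $\cP(X/B;D)$ and pairs $(b,\cP(b))$.

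I expect no genuine obstacle here; the statement is essentially an unwinding of definitions. The one point worth verifying is that the affine action of $\cQ(X/B;D)_b$ built into the fibrewise $\oplus$ construction matches, under the identifications above, the action of meromorphic quadratic differentials on meromorphic projective structures appearing in Lemma~\ref{triv}. Since both actions are induced by the same addition of a quadratic differential to a projective structure on the fixed Riemann surface $X(b)$, this compatibility is immediate from the way the vector bundles $\cQ(X/B)$ and $\cQ(X/B;D)$ were defined.
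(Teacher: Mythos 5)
Your proposal is correct and matches the paper's intent exactly: the paper states this lemma with no proof beyond the remark that it is ``immediate'' from forming $\cP(X/B;D)=\cP(X/B)\oplus_{\cQ(X/B)}\cQ(X/B;D)$ by the fibrewise construction of Lemma \ref{triv}, which is precisely the unwinding you carry out. Your closing remark about checking compatibility of the two affine actions is the right (and only) point to verify, and it holds for the reason you give.
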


We define a  \emph{family of meromorphic projective structures} on  $\pi\colon X\to B$ relative to the divisor $D$ to be a holomorphic section of the affine bundle
\[p\colon \cP(X/B;D)\to B.\]
When $D=0$ this reduces to the previous notion of a family of holomorphic projective structures by Proposition \ref{projst} (ii).

\subsection{Local description}
 We shall need the following local description of  a family of meromorphic projective structures. Take notation as above, and let $\Delta\subset \bC$ denote the unit disc.

\begin{prop}
\label{wall}
Suppose given a family of meromorphic projective structures $\cP(b)$ on the family $\pi\colon X\to B$ relative to the divisor $D$. Given a choice of one of the divisors $D_i$, there exists an open subset $U\subset X$ and an isomorphism $h$ fitting into the diagram 
\[\xymatrix{ U \ar[rr]^{h}_{\isom} \ar[dr]_{\pi|_U}&& V\times\Delta \ar[dl]^{p_1} \\&V}\]
where $V=\pi(U)$, 
such that $h^{-1}(V\times \{0\})=D_i\cap \pi^{-1}(V)$. Moreover, there exists a holomorphic function $\varphi(b,z)$ on $V\times \Delta$ such that the family of meromorphic projective structures $h(\cP(b))$ induced on $\Delta$ is given by  ratios of solutions to the equation
\[y''(z)+z^{-m_i}\cdot  \varphi(b,z) y(z)=0.\]
\end{prop}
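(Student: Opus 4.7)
My plan is to construct the chart $h$ via the implicit function theorem and then extract $\varphi$ by using the standard projective structure on $\Delta$ as a reference family.

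First, since $\pi$ is a submersion and $p_i\colon B \to X$ is a holomorphic section whose image is $D_i$, a standard application of the implicit function theorem yields, around any point $p_i(b_0)$, holomorphic coordinates $(b,z)$ on $X$, with $b$ pulled back from $B$, in which $D_i = \{z=0\}$. After rescaling the fibre coordinate $z$ and shrinking $V$ so that the other divisors $D_j$ with $j \neq i$ do not meet $U$, this produces the required isomorphism $h\colon U \to V \times \Delta$ over $V$ with $h^{-1}(V\times\{0\}) = D_i \cap \pi^{-1}(V)$.

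Next, I transport along $h$ the family of holomorphic projective structures $\cP_0(b)$ on the fibres $\{b\}\times\Delta$ defined by the single chart $z\colon \Delta \hookrightarrow \bP^1$; by Proposition~\ref{projst}(ii), this corresponds to a holomorphic section of the affine bundle $\cP(U/V)\to V$. The given family $\cP(b)$ restricts to a holomorphic section of $\cP(U/V;D|_U)\to V$, and subtracting $\cP_0$ yields a holomorphic section of the vector bundle $\cQ(U/V;D|_U)\to V$. Concretely, via $h$ this section is a meromorphic quadratic differential on each fibre $\{b\}\times\Delta$ with a pole of order at most $m_i$ at $z=0$ and no other poles, depending holomorphically on $b$. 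Writing it as $\psi(b,z)\,dz^{\otimes 2}$ and setting $\varphi(b,z) := -z^{m_i}\psi(b,z)$ gives a holomorphic function on $V\times\Delta$ with
\[(\cP(b)-\cP_0(b))|_{\{b\}\times\Delta} \;=\; -z^{-m_i}\,\varphi(b,z)\,dz^{\otimes 2}.\]

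Finally, by the local description of the affine action recalled in equation~\eqref{cat}, adding a quadratic differential $\varphi_1(z)\,dz^{\otimes 2}$ to $\cP_0$ yields a projective structure whose charts are quotients of linearly independent solutions to $y''(z)-\varphi_1(z)y(z)=0$; applied fibrewise with $\varphi_1(z)=-z^{-m_i}\varphi(b,z)$, this produces the required equation. The main obstacle is verifying that $\varphi(b,z)$ is jointly holomorphic in $(b,z)$ rather than merely holomorphic in $b$ for each fixed $z$; this reduces to the fact that, under the trivialization $h$, the holomorphic structure on the bundle $\cQ(X/B;D)\to B$ identifies its local sections with holomorphic functions on $V\times\Delta$ having a pole of order at most $m_i$ along $V\times\{0\}$, a statement which follows from the base change properties of $\pi_*(\omega_{X/B}^{\otimes 2}(D))$.
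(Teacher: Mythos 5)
Your argument is correct and ends the same way the paper's does (push the structures across $h$, subtract the standard projective structure on $V\times\Delta$, and read off the relative quadratic differential), but it builds the trivialization $h$ by a genuinely different route. The paper first chooses, after shrinking $B$, a family of holomorphic projective structures $\cP_0(b)$ on $\pi\colon X\to B$ and sets $h(x)=(\pi(x),f(x))$ where $f$ is Hubbard's \emph{relative developing map} for $\cP_0$; with that choice $h(\cP_0(b))$ is literally the standard structure on each $\{b\}\times\Delta$, so the difference $h(\cP(b))-h(\cP_0(b))$ is the image of the globally defined holomorphic section $\cP(b)-\cP_0(b)$ of the finite-rank bundle $\cQ(X/B;D)$ from Section \ref{fam}, and joint holomorphy of $\varphi(b,z)$ comes for free from that framework. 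You instead obtain $h$ from the implicit function theorem applied to the submersion $\pi$ and the section $p_i$, which avoids invoking the existence of relative developing maps but leaves your reference structure $h^{-1}(\text{standard})$ defined only on $U$; consequently you must verify joint holomorphy of the difference by hand, and you correctly flag this as the one delicate point. Your reduction to base change for $\pi_*(\omega_{X/B}^{\otimes 2}(D))$ is a valid way to close it; a more self-contained alternative is to note that the difference is computed chartwise by the Schwarzian \eqref{schwarz} of the jointly holomorphic transition functions of the relative atlas, hence is jointly holomorphic on $V\times(\Delta\setminus\{0\})$, after which $z^{m_i}\psi(b,z)$ extends over $V\times\{0\}$ by the fibrewise Cauchy integral formula. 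Two small remarks: since $U\to V$ is not proper, ``$\cQ(U/V;D|_U)$'' is not one of the finite-rank bundles of Section \ref{fam} and should just be read as a relative meromorphic quadratic differential on $U/V$ with poles of order $\leq m_i$ along $D_i$; and your sign bookkeeping $\varphi=-z^{m_i}\psi$ reconciling \eqref{cat} with the plus sign in the stated equation is correct (the paper's own proof silently drops this sign).
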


\begin{proof}
Replacing $B$ by an open ball we can take a family of holomorphic projective structures on $\pi\colon X\to B$.  Replace $X$ by a contractible open neighbourhood of the divisor $D_i$. We  then take a relative developing map $f\colon X\to \bP^1$, which we can assume satisfies $f(D_i)=\{0\}$ and $\Delta\subset f(X)$. Set $U=f^{-1}(\Delta)$ and define $h(x)=(\pi(x),f(x))$. Then $h$ is easily seen to have non-degenerate derivative, and so shrinking $B$ and $U$ again if necessary we can ensure that $h$ is  an isomorphism.

Pushing the family of meromorphic projective structures across this isomorphism gives a family of meromorphic projective structures on $\Delta$ relative to the divisor $m_i\cdot [0]$. Subtracting the trivial family of holomorphic projective structures gives a family of meromorphic quadratic differentials on $\Delta$ which can be written in the form $z^{-m_i} \cdot \varphi(b,z) dz^{\tensor 2}$ with $\varphi(b,z)$ holomorphic,  and so the result follows.
\end{proof}

We will also need a family version of the statement that meromorphic projective structures induce holomorphic projective structures on the complement of their poles.

\begin{lemma}
A  family of meromorphic projective structures on the family $\pi\colon X\to B$ relative to the divisor $D$ induces a family of holomorphic projective structures on the family $\pi\colon X\setminus D \to B$.
\end{lemma}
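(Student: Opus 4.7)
The plan is to exhibit a relative projective atlas on $\pi\colon X\setminus D\to B$ whose fibrewise restriction at each $b\in B$ recovers the projective structure $\cP(b)|_{X(b)\setminus D(b)}$. Since $\cP(b)$ has no poles on $X(b)\setminus D(b)$, this restriction is already a genuine holomorphic projective structure; and since a family of holomorphic projective structures is by definition one induced by a relative projective atlas, producing such an atlas will suffice. The construction proceeds locally on $B$ and on $X$, with two kinds of local model.

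Away from $D$, I would choose a small open $V\subset B$ over which the affine bundle $\cP(X/B;D)\to B$ trivialises, writing the family locally as $\cP(b)=\cP_0(b)+\phi(b)$ with $\cP_0$ a holomorphic section of $\cP(X/B)$ and $\phi$ a holomorphic section of $\cQ(X/B;D)$. By Proposition \ref{projst}(ii), $\cP_0$ is induced by a relative projective atlas; I would take a chart $z\colon U\to\bC$ in this atlas with $U\subset \pi^{-1}(V)\setminus D$ having simply-connected fibrewise intersections, write $\phi(b)=\varphi(b,z)\,dz^{\tensor 2}$ with $\varphi$ holomorphic on $U$ (no poles, since $U$ avoids $D$), and construct a new relative chart on $U$ as the ratio $y_1(b,z)/y_2(b,z)$ of two linearly independent solutions of $y''(z)-\varphi(b,z)y(z)=0$. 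Near a divisor component $D_i$, Proposition \ref{wall} directly supplies a trivialisation $h\colon U\to V\times \Delta$ under which the family is given by ratios of solutions of $y''(z)+z^{-m_i}\varphi(b,z)y(z)=0$; on each simply-connected open subset of $V\times \Delta^*$, choosing two linearly independent solutions of this ODE again produces a relative projective chart, and these cover $h^{-1}(V\times \Delta^*)=U\setminus D_i$.

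Patching these two types of chart gives a cover of $X\setminus D$ by relative projective charts inducing the required fibrewise projective structures, hence a relative projective atlas in the sense of Section 7. The only non-trivial input is the classical fact that solutions of a linear ODE with holomorphic coefficients depend holomorphically on parameters, which guarantees that the ratios $y_1/y_2$ are holomorphic on their total spaces and not merely on each fibre. I expect this to be the main (and only) technical point, and it is standard; keeping track of the simply-connected domains on which the solutions are globally defined is handled by shrinking $U$ appropriately in each case.
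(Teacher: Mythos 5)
Your construction is correct and is essentially the argument the paper has in mind: the paper's proof simply notes that the statement is fibrewise obvious and refers to Hubbard's Proposition~1 for the existence of relative atlases, and what you have written out (local splitting $\cP(b)=\cP_0(b)+\phi(b)$ over the affine bundle, relative charts from ratios of solutions of $y''-\varphi(b,z)y=0$, with holomorphic dependence on parameters as the only analytic input) is precisely the content of that citation. Note only that your second local model via Proposition \ref{wall} is redundant, since every point of $X\setminus D$ already admits a chart of the first kind.
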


\begin{proof} Fibrewise this is obvious: a projective structure on  the fibre $X(b)$ having poles only at the points    $p_i(b)$ induces a holomorphic projective structure on the complement $X(b)\setminus D(b)$. One must show the existence of relative atlases. This follows exactly as in the proof of \cite[Proposition~1]{Hub}.
\end{proof}

\section{Moduli spaces of projective structures}
\label{msps}

In this section we introduce complex manifolds $\Proj(\bS,\bM)$ parameterising marked meromorphic projective structures. We also introduce the variants $\Proj^\circ(\bS,\bM)$ and $\Proj^*(\bS,\bM)$ mentioned in the introduction.
We then prove that the generalized monodromy map
\[F\colon \Proj^*(\bS,\bM)\to \cX^*(\bS,\bM)\]
 can be locally lifted to give a holomorphic map into the complex variety $X^*(\bS,\bM)$. 
The proof of Theorem \ref{one} will be completed in Section \ref{final} when we use Fock-Goncharov co-ordinates to show that the quotient $\cX^*(\bS,\bM)$ is a complex manifold.

\subsection{Moduli of marked projective structures}

Recall from Section \ref{mbs} the basic definitions concerning marked bordered surfaces. Let $(\bS,\bM)$ be a fixed marked bordered surface determined by a genus $g$ and a non-empty collection of non-negative integers $\{k_1,\cdots, k_d\}$. For each $i=1,\cdots,d$ we set $m_i=k_i+2$. Recall from Section \ref{la} that these integers give the pole orders of a meromorphic  projective structure with the associated marked bordered surface $(\bS,\bM)$.

Consider a meromorphic projective structure $\cP$ on a compact Riemann surface $S$.  By a marking of the pair $(S,\cP)$ by the surface $(\bS,\bM)$ we mean an isotopy class of isomorphisms between  $(\bS,\bM)$ and the marked bordered surface associated to the pair $(S,\cP)$.
 Clearly, the set of markings of a given pair $(S,\cP)$ by the surface $(\bS,\bM)$ is either empty or is a torsor for the mapping class group $\MCG(\bS,\bM)$.

\begin{lemma}
\label{loc}
Take notation as in Section \ref{fam}. Then there is a dense open subset of  the manifold $\Proj(X/B;D)$ parameterising projective structures with poles of orders exactly  $\{m_1,\cdots, m_d\}$. There is also a principal $\MCG(\bS,\bM)$ bundle over this open subset  whose fibre at a point $(b,\cP(b))$ is the set of markings of the pair $(X(b),\cP(b))$.
\end{lemma}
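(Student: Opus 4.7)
The plan is to break the lemma into two parts: first, showing that requiring the pole orders to be exactly $m_i$ cuts out a dense open subset $\Proj^{\mathrm{ex}}(X/B;D)\subset\Proj(X/B;D)$; and second, producing a local trivialisation of the set of markings over this open subset.

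For the first part, fix an index $i$ and work locally near the divisor $D_i$ using Proposition \ref{wall}. In the local coordinates provided there, the family of projective structures is given by $y''(z)+z^{-m_i}\varphi(b,z)\,y(z)=0$ for a holomorphic function $\varphi$. The pole at $p_i(b)$ has order strictly less than $m_i$ precisely when $\varphi(b,0)=0$. Globalising, the map sending a point of $\Proj(X/B;D)$ to the leading polar coefficient at $p_i$ defines a holomorphic section of a line bundle on $\Proj(X/B;D)$. Non-vanishing of a nonzero holomorphic section is an open condition, and since $\Proj(X/B;D)\to B$ is an affine bundle for the vector bundle $\cQ(X/B;D)\to B$, fibrewise the exact-pole-order condition cuts out the complement of a finite union of proper affine hyperplanes, which is dense.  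Intersecting the $d$ such open dense subsets produces $\Proj^{\mathrm{ex}}(X/B;D)$.

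For the second part, fix a point $x_0=(b_0,\cP_0)\in\Proj^{\mathrm{ex}}(X/B;D)$. I would construct, on an open neighbourhood $V$ of $x_0$, a topological trivialisation of the family of marked bordered surfaces associated fibrewise to the pairs $(X(b),\cP(b))$. Shrinking $B$, Ehresmann's theorem applied to the proper submersion $\pi\colon X\to B$ together with the holomorphic sections $p_i$ gives a smooth trivialisation of $(X,\{p_i(B)\})$ over $B$. Performing a real oriented blow-up along each section $p_i(B)$ with $m_i>2$ yields a smooth family of bordered surfaces with a continuous trivialisation, and the interior marked points are provided by the remaining sections. The boundary marked points at an irregular pole $p_i(b)$ are the $m_i-2$ rays determined by the condition $\varphi(b,0)\cdot z^{2-m_i}\in\bR_{>0}$, where $\varphi$ is as in Proposition \ref{wall}; since $\varphi(b,0)$ is a continuous non-vanishing function on $V$, these rays depend continuously on the point of $V$, and in particular can be transported via the smooth trivialisation of the blow-up to give a continuous family of marked points on a fixed model boundary circle. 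Concatenating these data gives a continuous identification of each fibre of the marked bordered surface family over $V$ with the model $(\bS,\bM)$, and hence a continuous section of the marking bundle over $V$.

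With such a local section in hand, the set of markings of $(X(b),\cP(b))$ over any point of $V$ is a non-empty $\MCG(\bS,\bM)$-torsor (as noted in the paragraph preceding the lemma), and the local section exhibits the full marking set over $V$ as $V\times\MCG(\bS,\bM)$, with the $\MCG(\bS,\bM)$-action by postcomposition of diffeomorphisms. The main technical obstacle I anticipate is verifying the continuity of the boundary marked points through the real oriented blow-up: one must check that the identification of the asymptotic horizontal directions of $\varphi(b,\cdot)$ with points on a fixed circle is compatible with the smooth trivialisation of the blown-up family, which is essentially a matter of unwinding the definition of the real oriented blow-up in a holomorphic family and observing that the construction is functorial in the smooth category. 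Once this is granted, the remaining assembly is routine.
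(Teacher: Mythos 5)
Your proposal is correct and follows essentially the same route as the paper: the open dense subset is cut out by non-vanishing of the leading polar coefficients (equivalently, the zeroes of the corresponding quadratic differential avoiding the $D_i$), and the local triviality of the marking set reduces to the continuity of the asymptotic horizontal directions, which are determined by those leading coefficients. Your write-up merely makes explicit the Ehresmann/blow-up trivialisation that the paper leaves implicit; the only slip is a harmless sign (the potential in Proposition \ref{wall} is $-z^{-m_i}\varphi(b,z)$, so the horizontal directions are where $-\varphi(b,0)\cdot z^{2-m_i}\in\bR_{>0}$), which does not affect continuity.
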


\begin{proof}
After choosing a family of holomorphic projective structures on $\pi\colon X\to B$, the dense open subset corresponds to the subset of $\cQ(X/B;D)$ consisting of quadratic differentials whose zeroes are disjoint from the divisors $D_i$. This is easily seen to be open and dense.
The result then follows immediately once one knows that the asymptotic horizontal directions of these quadratic differentials vary continuously. But these directions are determined by the leading coefficient, which certainly varies continuously.
\end{proof}

Let us consider triples $(S,\cP,\theta)$, where $S$ is a Riemann surface equipped with a meromorphic projective structure $\cP$, and  $\theta$ is a marking of the pair $(S,\cP)$ by the surface $(\bS,\bM)$. Two such triples $(S_i,\cP_i,\theta_i)$ are considered to be equivalent if there is a biholomorphism  $f\colon S_1\to S_2$ between the underlying Riemann surfaces, which preserves the projective structures $\cP_i$ and commutes with the markings $\theta_i$ in the obvious way.

\begin{prop}
Assume that if $g(\bS)=0$ then $|\bM|\geq 3$. Then the set $\cP(\bS,\bM)$ of equivalence classes of marked projective structures  on $(\bS,\bM)$ has the natural structure of a complex manifold of dimension
\[n = 6g-6+\sum_i (k_i+3).\]
\end{prop}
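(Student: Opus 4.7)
Plan: The strategy is to assemble $\cP(\bS,\bM)$ from local charts built via the machinery of Section~7 over a suitable moduli space of marked Riemann surfaces.

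Pick a representative $(S_0,\cP_0,\theta_0)$ of a class in $\cP(\bS,\bM)$ with labeled poles $p_1^0,\dots,p_d^0$ of orders $m_1,\dots,m_d$. I would choose a holomorphic family $\pi\colon X\to B$ of Riemann surfaces deforming $(S_0,p_1^0,\dots,p_d^0)$, together with $d$ disjoint labeled sections $D_i$; here $B$ is a small neighborhood of a basepoint $b_0$ in (a local slice of) the Teichm\"uller space of $d$-pointed genus-$g$ Riemann surfaces, whose dimension is $3g-3+d$ in the generic case. Applying Section~\ref{fam} with the divisor $D=\sum m_iD_i$ yields the affine bundle $p\colon\cP(X/B;D)\to B$, a complex manifold whose fiber dimension is $h^0(\omega_S^{\otimes 2}(D))=3g-3+\sum m_i$ by Riemann--Roch, with the vanishing of $h^1$ justified (in the genus-zero case) by the hypothesis $|\bM|\ge 3$. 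The total dimension of $\cP(X/B;D)$ is thus the desired $n$.

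Next, I would restrict to the dense open subset $U\subset\cP(X/B;D)$ of Lemma~\ref{loc} parameterizing projective structures with pole orders exactly $m_i$, and pass to the principal $\MCG(\bS,\bM)$-bundle $\tilde U\to U$ of $(\bS,\bM)$-markings supplied by the same lemma. The key observation is that the asymptotic horizontal directions at each irregular pole depend holomorphically on the leading coefficient of the corresponding polar quadratic differential, so the reference marking $\theta_0$ extends by continuity to a holomorphic local section of $\tilde U\to U$ near the basepoint. This produces an open embedding of a neighborhood in $U$ onto a neighborhood of $[(S_0,\cP_0,\theta_0)]$ in $\cP(\bS,\bM)$, and hence a local chart of dimension $n$.

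Patching these local charts into a single global complex manifold structure is routine: given overlapping charts around representatives of the same equivalence class, the unique biholomorphism identifying the triples extends, by the universal property of the local Teichm\"uller family, to an isomorphism of the corresponding families $\pi\colon X\to B$, and naturality of the construction of Section~\ref{fam} (together with Proposition~\ref{wall}) forces the induced transition map to be biholomorphic. Uniqueness of the complex structure follows from the requirement that the forgetful map to Teichm\"uller space be holomorphic and exhibit $\cP(\bS,\bM)$ as an affine bundle for $\cQ(X/B;D)$.

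The main obstacle lies in treating the exceptional genus-zero configurations where $|\bM|\ge 3$ but the pointed Riemann surface $(S_0,p_1^0,\dots,p_d^0)$ nevertheless carries a positive-dimensional automorphism group---the paradigmatic case being $g=0,\ d=1,\ m_1\ge 5$, where $\Aut(\bP^1,\{\infty\})$ is the two-dimensional affine group. Here the naive Teichm\"uller space is not a smooth complex manifold of the expected dimension $3g-3+d$, and the construction must be modified: one argues that the combined action of $\Aut(S_0,p_i)$ on the fiber of quadratic differentials and on the (discrete) set of Stokes-direction labelings encoded in the $(\bS,\bM)$-marking is free---this freeness is precisely what the hypothesis $|\bM|\ge 3$ delivers---so the quotient remains a complex manifold of dimension $n$. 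Setting up the local slices and verifying this freeness case by case is the technical heart of the argument.
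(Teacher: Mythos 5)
Your proposal is correct and follows essentially the same route as the paper: build $\cP(X/B;D)$ as an affine bundle over (a model of) the Teichm\"uller space of $d$-pointed genus-$g$ curves, compute the fibre dimension by Riemann--Roch, pass to the covering by $(\bS,\bM)$-markings using the continuity of the asymptotic horizontal directions, and treat the genus-zero cases with $d\le 2$ separately by showing that the residual automorphisms (which the hypothesis $|\bM|\ge 3$ forces to be rescalings by roots of unity) act freely on the markings. The only cosmetic difference is that the paper works globally over the moduli stack $\cM(g,d)$ (or its Teichm\"uller cover) rather than patching local slices.
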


\begin{proof}
The definition of a marked bordered surface ensures that $d\geq 1$.  Let us also assume for now that if $g=0$ then $d\geq 3$. Let $B=\cM(g,d)$ denote the moduli stack of compact Riemann surfaces of genus $g$ with $d$ ordered marked points. Under our assumptions it is a smooth Deligne-Mumford stack, of dimension $3g-3+d$, and can thus be viewed as a complex orbifold. Readers squeamish about stacks can replace it by its universal cover, which under our assumptions is a complex manifold, the Teichm{\"u}ller space $\cT(g,d)$ of marked, pointed Riemann surfaces.

There is a universal curve $\pi\colon X\to B$ with $d$ disjoint sections $p_1,\cdots, p_d$. We can apply the construction of Section \ref{fam} to obtain a space $\cP(X/B;D)$ whose points are in bijection with the set of isomorphism classes of pairs $(S,\cP)$ consisting of a compact Riemann surface $S$ of genus $g$ together with a meromorphic projective structure $\cP$ having $d$ poles of orders $m_i\leq k_i+2$. Riemann-Roch tells us that the vector bundle \eqref{disp} has rank
\[3g-3 + \sum_i (k_i+2),\]
so that the space $\cP(X/B;D)$ has the claimed dimension. 

Using Lemma \ref{loc} we can then pass to a covering space $\cP(\bS,\bM)$ of an open subset whose points consist of such pairs together with a marking. If one is dealing with Teichm{\"u}ller space $\cT(g,d)$ one should insist that the marking of $(S,\cP)$ by $(\bS,\bM)$ is compatible in the obvious way with the marking of the underlying curve $S$ by the closed surface obtained by blowing-down all boundary components of $\bS$. In any case, the resulting space is indeed a manifold rather than an orbifold, because the same is true of $\cT(g,d)$: although the surface $S$ may have automorphisms, none of these fix the marking of $S$, let alone the projective structure $\cP$ and its associated marking.

The cases when $g=0$ and $d\leq 2$ can be treated in the same way if one is happy to consider Artin stacks, but they can also be understood directly. The relevant projective structures are obtained from the standard one on $\bP^1$ by adding a meromorphic differential of the form $p(z) dz^{\tensor 2}$ with $p(z)$ a Laurent polynomial. The assumption $|\bM|\geq 3$ implies that $p(z)$ is not constant, so the only possible automorphisms are rescalings of $z$ by roots of unity, and these are easily seen to act non-trivially on the markings, which consist of a labelling of the asymptotic directions at infinity. Thus although the spaces $\cM(g,d)$ and $\cP(X/B;D)$ are  smooth Artin stacks,  the cover  $\cP(\bS,\bM)$ is again a manifold. 
\end{proof}

We will often loosely refer to the points of the space $\cP(\bS,\bM)$ as parameterising marked projective structures on the marked bordered surface $(\bS,\bM)$.

\subsection{Monodromy and signed projective structures}

Let us fix a marked bordered surface $(\bS,\bM)$ and choose a base-point $x\in \bS^*$.  Let us write $B= \cP(\bS,\bM)$.  Taking monodromy of the meromorphic projective structures gives a family of representations
\[\rho_b\colon \pi_1(\bS^*,x)\to G,\]
indexed by the points $b\in B$, and well-defined up to overall conjugation by a holomorphic function $g\colon B\to G$.
For each $\gamma\in \pi_1(\bS^*,x)$, the resulting maps
\[\rho(\gamma)\colon B\to G, \qquad b\mapsto \rho_b(\gamma)\]
are holomorphic. Indeed, this is a local question, so we can replace $B$ by a small open ball, and then the existence of relative developing maps discussed above gives the result.

\begin{lemma}
There is an open subset 
\[\cP^\circ(\bS,\bM)\subset \cP(\bS,\bM)\]
consisting of marked projective structures without apparent singularities on $(\bS,\bM)$.
\end{lemma}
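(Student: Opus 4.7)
The plan is to exhibit the complement of $\cP^\circ(\bS,\bM)$ as a finite union of closed analytic subsets, using the holomorphic dependence of monodromy on the base that was just established.

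First, I would recall the set-up: a point $b \in B := \cP(\bS,\bM)$ corresponds to a marked meromorphic projective structure whose poles are in bijection with the marked points, with punctures $p \in \bP$ corresponding to regular singularities (poles of order $\leq 2$, in fact of order exactly $2$ on the dense open locus where pole orders are maximal). By definition, $b$ has an apparent singularity at the puncture $p$ precisely when the monodromy around $p$ is trivial in $G$. Hence
\[
\cP(\bS,\bM) \setminus \cP^\circ(\bS,\bM) \;=\; \bigcup_{p \in \bP} Z_p, \qquad Z_p = \{b \in B : \rho_b(\delta_p) = \id\},
\]
where $\delta_p \in \pi_1(\bS^*,x)$ is the chosen small loop encircling the puncture $p$. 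Since $\bP$ is finite, it suffices to show each $Z_p$ is closed.

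Next, I would observe that although the monodromy representation $\rho_b$ is only well-defined up to overall conjugation by a holomorphic function $g \colon B \to G$, the condition $\rho_b(\delta_p) = \id$ is manifestly conjugation-invariant, so $Z_p$ is a well-defined subset of $B$. To check that it is closed (in fact, closed analytic), the question is local: over any sufficiently small open ball $U \subset B$, the preceding discussion of relative developing maps provides a genuine holomorphic lift $\rho(\delta_p)\colon U \to G$ of the monodromy around $\delta_p$. Then $Z_p \cap U$ is the preimage of $\id \in G$ under this holomorphic map, hence a closed analytic subset of $U$. Globalizing, $Z_p$ is closed in $B$, and therefore so is the finite union over $p \in \bP$, proving that its complement $\cP^\circ(\bS,\bM)$ is open.

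This argument is essentially an application of what has already been established; the only minor subtlety is to confirm that the loop $\delta_p$ makes sense uniformly in $b$, which is automatic since the marking identifies $\pi_1(\bS^*,x)$ with the fundamental group of the punctured surface underlying each fibre. I do not anticipate any serious obstacle here, since holomorphicity of $b \mapsto \rho_b(\gamma)$ for each fixed $\gamma$ was recorded just above the statement.
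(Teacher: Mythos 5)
Your proposal is correct and follows the same route as the paper, which simply notes that $\cP^\circ(\bS,\bM)$ is cut out by the open conditions $\rho(\delta_p)\neq 1$ for the finitely many punctures $p\in\bP$, using the holomorphicity (hence continuity) of $b\mapsto\rho_b(\delta_p)$ established just before the lemma. Your version merely spells out the details (conjugation-invariance, local lifts via relative developing maps, finiteness of $\bP$) that the paper leaves implicit.
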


\begin{proof}
This is immediate from the above discussion. Indeed,  in terms of the  elements $\delta_p\in \pi_1(\bS^*,x)$ defined in Section \ref{framedlocalsystems}, the subset $\cP^\circ(\bS,\bM)$  is defined by the condition that $\rho(\delta_p)\neq 1\in G$ for all $p\in \bP$, and this is clearly open.
\end{proof}

Recall the notion of a signed projective structure from Section \ref{sig}.

\begin{prop}
There is a complex manifold $\cP^*(\bS,\bM)$ equipped with a finite map
\begin{equation}
\label{jih}\cP^*(\bS,\bM)\to \cP^\circ(\bS,\bM)\end{equation}
whose points are in bijection with signed marked projective structures without apparent singularities on $(\bS,\bM)$.
\end{prop}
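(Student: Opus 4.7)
The plan is to construct $\cP^*(\bS,\bM)$ as an analytic subvariety of $\cP^\circ(\bS,\bM)\times\bC^{\bP}$ cut out by sign-squaring equations at the punctures. First I would observe that for each puncture $p\in\bP$ the leading coefficient of Section \ref{sig} gives a holomorphic function $a_p\colon\cP^\circ(\bS,\bM)\to\bC$: indeed, given any local family of marked projective structures, the trivialisation of Proposition \ref{wall} at $p$ writes the structure as ratios of solutions of $y''+z^{-2}\varphi(b,z)y=0$, so $a_p(b)=\varphi(b,0)$ varies holomorphically in $b$. Since a signing at $p$ is, by \eqref{resnot}, a choice of sign of $r(p)=\pm 2\pi i\sqrt{1+4a_p}$, I would then set
\[
\cP^*(\bS,\bM)=\{(b,(s_p))\in\cP^\circ(\bS,\bM)\times\bC^{\bP}:s_p^2=1+4a_p(b)\text{ for all }p\in\bP\},
\]
with the evident projection to $\cP^\circ(\bS,\bM)$. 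A point $(b,(s_p))$ corresponds set-theoretically to the marked projective structure $b$ equipped with the signing $r(p)=2\pi i\cdot s_p$, which establishes the required bijection.

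The crucial step is to verify that this subvariety is smooth as a complex manifold. Applying the Jacobian criterion to the defining equations $s_p^2-1-4a_p(b)=0$, smoothness at a point $(b,(s_p))$ can fail only when some $s_p=0$, equivalently $a_p(b)=-\tfrac14$. A direct computation of the Jacobian then shows that $\cP^*(\bS,\bM)$ is smooth at $(b,(s_p))$ provided the subset of differentials $\{da_p(b):s_p=0\}$ is linearly independent in $T^*_b\cP^\circ(\bS,\bM)$. I would therefore establish the stronger statement that the joint leading-coefficient map
\[
A=(a_p)_{p\in\bP}\colon\cP^\circ(\bS,\bM)\lra\bC^{\bP}
\]
is a submersion at every point of $\cP^\circ(\bS,\bM)$.

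To prove submersivity I would exploit the affine-bundle structure of Section \ref{fam}: the restriction of $A$ to a fibre of the natural forgetful map from $\cP^\circ(\bS,\bM)$ to the Teichm\"uller space $\cT(g,d)$ becomes a $\bC$-linear map on an affine space modelled on $H^0(S,\omega_S^{\tensor 2}(D))$, with kernel equal to the subspace of meromorphic differentials in which every puncture pole order drops by at least one, namely $H^0(S,\omega_S^{\tensor 2}(D-\sum_{p\in\bP}p))$. A routine Riemann-Roch calculation then shows that both cohomology groups have vanishing $H^1$ under our numerical hypotheses on $(\bS,\bM)$ -- in particular the assumption $|\bM|\geq 3$ when $g=0$, together with $m_p=2$ at each puncture -- so the dimension drop equals $|\bP|$ and the linear map is surjective. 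Once submersivity of $A$ is in hand, smoothness of $\cP^*(\bS,\bM)$ follows, the projection to $\cP^\circ(\bS,\bM)$ is finite of generic degree $2^{|\bP|}$ branched over $\bigcup_{p\in\bP}\{a_p=-\tfrac14\}$, and the construction is complete. The main technical obstacle is carrying out the Riemann-Roch check uniformly across all the allowed genus and pole-order configurations.
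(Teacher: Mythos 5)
Your construction is essentially the paper's: the paper likewise defines the holomorphic leading-coefficient map $a\colon\cP(\bS,\bM)\to\bC^{\bP}$ via Proposition \ref{wall} and realises $\cP^*(\bS,\bM)$ as the branched cover obtained by choosing a square root of $1+4a_p$ at each puncture, with smoothness reduced to the statement that $a$ is a submersion. The only difference is that where you carry out the Riemann--Roch computation showing the fibrewise linear map $H^0(S,\omega_S^{\otimes 2}(D))\to\bC^{\bP}$ is surjective (which does check out under the hypothesis $|\bM|\geq 3$ when $g=0$), the paper simply cites the proof of Lemma 6.1 of Bridgeland--Smith for the submersivity.
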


\begin{proof}
Note first that there is a map
\begin{equation*}
a\colon\cP(\bS,\bM)\to \bC^{\bP}\end{equation*}
sending a marked projective structure to the leading coefficient at each of the punctures  $p\in \bP$. 
 This map  is holomorphic by Proposition \ref{wall},  and we can therefore pass to the branched  cover
 \begin{equation}
\label{lala}\Proj^*(\bS,\bM)\to \Proj(\bS,\bM)\end{equation}
 obtained by choosing a sign of the exponent \eqref{resnot}. This cover is smooth providing $a$ is a  submersion, which holds by the proof of  \cite[Lemma 6.1]{BS}.
The cover \eqref{lala} is clearly finite of degree $2^{|\bP|}$, and its points  are in bijection with signed marked projective structures.
\end{proof}

\subsection{Monodromy map is holomorphic}

Fix a marked bordered surface $(\bS,\bM)$, and if $g(\bS)=0$ assume that $|\bM|\geq 3$.
Sending a signed meromorphic projective structure without apparent singularities to its associated framed local system gives a generalised monodromy map
\[F\colon \cP^*(\bS,\bM)\to \cX^*(\bS,\bM).\]
We must show that this map is holomorphic. More precisely, let us take a small open ball $B\subset \cP^*(\bS,\bM)$. Via the map \eqref{jih} the ball $B$ parameterises a family of meromorphic projective structures. Let us choose a relative developing map. We can then lift the map $F$ so as to land in the algebraic variety
\[X(\bS,\bM)=\Big\{\rho\in \Hom_{\Grp}(\pi_1(\bS^*,x),G), \lambda\in \Hom_{\Set}(\bM,\bP^1):  \rho(\delta_p)(\lambda(p))=\lambda(p)\text{ for all }p\in \bP\Big\}.\]
It is the resulting locally-defined map $B\to X(\bS,\bM)$ that we will prove is holomorphic. The proof of Theorem \ref{one} will be completed in Section \ref{final} when we use Fock-Goncharov co-ordinates to show that the quotient $\cX^*(\bS,\bM)$ is a complex manifold.

We have already showed that the elements $\rho_b(\gamma)$ vary holomorphically with $b\in B$. It remains to prove that the elements $\lambda_b(p)\in \bP^1$ also vary holomorphically. There are two cases $p\in \bP$ and $p\in \bM\setminus\bP$ corresponding to regular and irregular singularities.

Considering the regular case first, suppose $p\in \bP$ is a puncture. Then the monodromy $M=\rho_b(\delta_p)\in G$ varies holomorphically with $b\in B$, and we can lift it to an element of $\SL_2(\bC)$. The eigenvalues $\mu_\pm$ of the matrix $M$ also vary holomorphically. It then follows that the corresponding eigenvectors also vary holomorphically, since they are defined by $(M-\mu_\pm)(v_\pm)=0$.

Consider now the irregular case, so that $p\in \bM$ is a marked point on a boundary component of~$\bS$. The corresponding $\lambda(p)$ is defined by a subdominant solution.  Using Proposition \ref{wall} we reduce to the following statement.

\begin{thm} Suppose given a  family of potentials depending on a parameter $t$ of the form $\varphi(z;t)=z^{-k} \cdot h(z,t)$ with $h(z,t)$ holomorphic. Then in the context of Theorem \ref{d} one can find subdominant solutions $y(z;t)$ which vary holomorphically with $t$.
\end{thm}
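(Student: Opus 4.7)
The plan is to run the entire construction of Theorem \ref{d} with parameter-dependent bookkeeping, checking that every step preserves holomorphy in $t$. All the preliminary reductions are manifestly holomorphic in $t$. The branched double cover $w \mapsto z = w^2$ used to reduce the case of odd $k$ does not involve $t$ at all, and produces a new potential $4 w^2 \varphi(w^2;t)$ with a pole of even order $2k-2$ and still-holomorphic $t$-dependence. The meromorphic gauge change by $Q(z) = \diag(z^{-q},1)$ leading to the system \eqref{q} also commutes with $t$. Thus it suffices to treat the even-order case $k = 2q$ and the system \eqref{may}, where the matrices $M_i = M_i(t)$ now depend holomorphically on $t$. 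Since the leading coefficient $a_0(t) = h(0,t)$ is nonvanishing (this being what makes the pole of order exactly $k$), the eigenvalues $\pm \sqrt{a_0(t)}$ of $M_0(t)$ are locally holomorphic in $t$, so the constant gauge change diagonalizing $M_0(t)$ can be chosen holomorphic in $t$ as well. The recursion defining the formal solution $\hat{Y}(z;t) = F(z;t) \exp Q(z;t)$ then propagates this holomorphy to all coefficient matrices $F_i(t)$ and $Q_i(t)$.

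The real content is a parametric version of the Balser--Jurkat--Lutz existence result: given a non-Stokes sector $\Sigma$ of angle $\pi/(q-1)$, one must produce a distinguished fundamental solution $Y_\Sigma(z;t)$ satisfying $Y_\Sigma(z;t) \exp(-Q(z;t)) \to 1$ as $z \to 0$ in $\Sigma$, and verify holomorphic dependence on $t$. The standard route is to write $Y_\Sigma(z;t) = (I + W(z;t)) \exp Q(z;t)$ and convert the linear ODE into a Volterra-type integral equation for the matrix $W(z;t)$, whose kernel has the form $\exp(Q(\zeta;t) - Q(z;t))$ times a holomorphic function of $(\zeta, t)$, integrated along a contour in $\Sigma$ terminating at $0$. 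The contour is chosen so that the diagonal entries of $Q(\zeta;t) - Q(z;t)$ have nonpositive real part as $\zeta$ traverses the path toward $z$, which forces the integral operator to be a contraction on a suitable Banach space of matrix-valued functions on $\Sigma$ with a weighted sup norm. Both the kernel and the inhomogeneous term depend holomorphically on $t$, and the contraction estimates are uniform as $t$ varies in compact subsets; hence the parametric contraction mapping principle, applied to the Banach space of functions on $\Sigma$ valued in the space of holomorphic functions of $t$, produces a fixed point $W(z;t)$ depending holomorphically on~$t$. Equivalently, one may appeal to the Ramis--Sibuya theorem in a parametric form, since the local estimates needed are all uniform in $t$.

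Given the parametric $Y_\Sigma(z;t)$, the subdominant solution in the Stokes sector adjacent to $\Sigma$ is the column of $Y_\Sigma$ associated with the diagonal entry of $\exp Q(z;t)$ that exhibits exponential decay there; this is manifestly a holomorphic function of $t$. Reversing the constant and meromorphic gauge transformations (both holomorphic in $t$), and in the originally-odd-order case pushing the result back down through the double cover, produces the desired holomorphic family of subdominant solutions $y(z;t)$. The main obstacle is the parametric contraction step: one must set up the Banach space, integration contour and kernel with enough care to make the classical BJL argument run uniformly in $t$ on compact sets. This is essentially contained in the proof in \cite{BJL}, but requires a careful tracking of $t$-dependence that is not recorded there explicitly.
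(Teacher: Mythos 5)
Your proposal is correct and follows essentially the same route as the paper: everything reduces to the holomorphic parameter-dependence of the Balser--Jurkat--Lutz distinguished sectorial solutions, after observing that the double cover, the meromorphic gauge change, and the diagonalising constant gauge all depend holomorphically (or not at all) on $t$. The only difference is that the paper simply cites \cite[Lemma 7]{Boalch} for this parametric existence statement, whereas you sketch its proof via a parametric contraction-mapping argument for the associated integral equation, which is the standard and correct way to establish the cited lemma.
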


\begin{proof}
This follows immediately once one knows that the distinguished solutions we used in the proof of Theorem \ref{d} vary holomorphically with parameters. This statement can be found for example in  \cite[Lemma 7]{Boalch}.
\end{proof}

\section{Fock-Goncharov co-ordinates}
\label{final}

Fock and Goncharov defined a system of birational co-ordinate charts on the stack $\cX(\bS,\bM)$ of framed $G$-local systems parameterised by ideal triangulations of the surface $(\bS,\bM)$. We give a mild extension of their results to cover the case of tagged triangulations. We also prove that for any point of the open substack $\cX^*(\bS,\bM)$ of non-degenerate framed local systems there exists some tagged triangulation such that the corresponding co-ordinates are well-defined and nonzero.

\subsection{Ideal triangulations}
\label{tri}

This section contains some standard material on ideal triangulations of marked bordered surfaces. A more careful treatment can be found in \cite{FST}. Throughout, $(\bS,\bM)$ denotes a marked bordered surface. 

An \emph{arc} in $(\bS,\bM)$ is a smooth path $\gamma$ in  $\bS$ connecting  points of $\bM$, whose interior lies in the open subsurface $\bS\setminus (\bM\cup \partial \bS)$, and which has no self-intersections in its interior.  We moreover insist that $\gamma$ not be homotopic, relative to its endpoints, to a single point, or to a path in $\partial \bS$ whose interior contains no points of $\bM$. Two arcs are considered to be equivalent if they are related by a homotopy through such arcs. A path that connects two marked points and lies entirely on the boundary of $\bS$ without passing through a third marked point is called a \emph{boundary segment}.

An \emph{ideal triangulation} of  $(\bS,\bM)$ is defined to be a maximal collection of equivalence classes of  arcs for which it is possible to find representatives whose interiors are pairwise disjoint. We refer to the arcs and boundary segments as the \emph{edges} of the triangulation. An example of an ideal triangulation of a disc with five marked points on its boundary is depicted in Figure \ref{below}. Note that it consists of just two arcs; the boundary segments are not considered to be arcs of the triangulation.

\begin{figure}[ht]
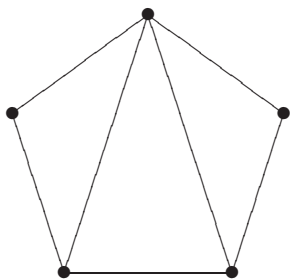

\begin{center}
\[
\xy /l1.5pc/:
{\xypolygon5"A"{~:{(-3,0):}}};
{"A5"\PATH~={**@{-}}'"A2"'"A4"};
"A1"*{\bullet};
"A2"*{\bullet};
"A3"*{\bullet};
"A4"*{\bullet};
"A5"*{\bullet};
\endxy
\]
\end{center}
\caption{A triangulation of a disc with five marked points.}\label{below}
\end{figure}

A \emph{triangle} of an ideal triangulation $T$ is the closure in $\bS$ of a connected component of the complement of all arcs of $T$. Each triangle is topologically a disc, containing either two or three distinct edges of the triangulation. A triangle with just two distinct edges is called a \emph{self-folded triangle} (see Figure~\ref{fig:selffolded}).
 
\begin{figure}[ht]
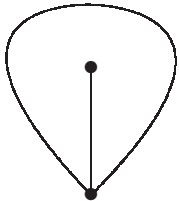
 \begin{center}
\[
\xy /l0.5pc/:
(0,-4)*{}="N"; 
(0,8)*{}="S"; 
(0,0);"S" **\dir{-}; 
(0,0)*{\bullet}; 
"S"*{\bullet}; 
"S";"N" **\crv{(-12,-4) & (0,-4)}; 
"S";"N" **\crv{(12,-4.5) & (0,-4)}; 
\endxy
\]
\caption{A self-folded triangle.\label{fig:selffolded}}
\end{center} \end{figure}
 
The \emph{valency} of a  puncture $p\in \bP$ with respect to a  triangulation $T$ is the number of half-edges of~$T$ that are incident with it; a puncture has valency 1 precisely if it is contained in the interior of a self-folded triangle.

Two ideal triangulations $T_1$ and $T_2$ are  related by a \emph{flip} if  they are distinct and there are edges $e_i\in T_i$  such that $T_1\setminus \{e_1\}=T_2\setminus \{e_2\}$ (see Figure~\ref{flippy}). Note that neither $e_1$ nor $e_2$ is the interior edge of a self-folded triangle. Conversely, if $e$ is not the interior edge of a self-folded triangle, it is contained in exactly two triangles of $T$, and  there is a unique ideal triangulation which is the flip of~$T$ along $e$.

\begin{figure}[ht]
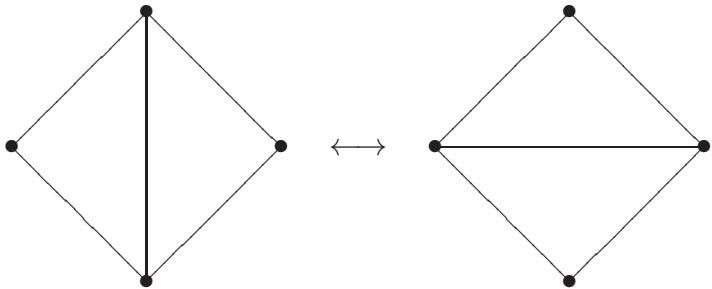

\begin{center}
\[
\xy /l1.5pc/:
{\xypolygon4"A"{~:{(2,2):}}};
{\xypolygon4"B"{~:{(2.5,0):}~>{}}};
{\xypolygon4"C"{~:{(0.8,0.8):}~>{}}};
{"A1"\PATH~={**@{-}}'"A3"};
"A1"*{\bullet};
"A2"*{\bullet};
"A3"*{\bullet};
"A4"*{\bullet};
\endxy
\quad
\longleftrightarrow
\quad
\xy /l1.5pc/:
{\xypolygon4"A"{~:{(2,2):}}};
{\xypolygon4"B"{~:{(2.5,0):}~>{}}};
{\xypolygon4"C"{~:{(0.8,0.8):}~>{}}};
{"A2"\PATH~={**@{-}}'"A4"};
"A1"*{\bullet};
"A2"*{\bullet};
"A3"*{\bullet};
"A4"*{\bullet};
\endxy
\]
\end{center}
\caption{Flip of a triangulation.\label{flippy}}
\end{figure}

Any two ideal triangulations of a surface $(\bS,\bM)$ can be related by a sequence of such flips \cite[Prop. 3.8]{FST}. Thus any ideal triangulation $(\bS,\bM)$ has the same number of arcs, namely the number $n$ of \eqref{n}.  In the cases when $n<0$ the surface has no ideal triangulation. We will write $J=J^T$ for the set of arcs of an ideal triangulation $T$.

There is an $n\times n$ integer matrix encoding the combinatorics of an ideal triangulation~$T$ of a marked bordered surface. For each arc $j$ of the triangulation we denote by $\pi_T(j)$ the arc defined as follows: if $j$ is the interior edge of a self-folded triangle we let $\pi_T(j)$ be the encircling edge, and we define $\pi_T(j)=j$ otherwise. For each non-self-folded triangle $t$ of $T$ we define a number $\varepsilon_{ij}^t$ by the following rules:
\begin{itemize}
\item[(i)] $\varepsilon_{ij}^t=+1$ if $\pi_T(i)$ and $\pi_T(j)$ are sides of $t$ with $\pi_T(j)$ following $\pi_T(i)$ in the counterclockwise order defined by the orientation.
\item[(ii)] $\varepsilon_{ij}^t=-1$ if the same holds with the clockwise order.
\item[(iii)] $\varepsilon_{ij}^t=0$ otherwise.
\end{itemize}
Then the $(i,j)$ element of the \emph{exchange matrix} associated to $T$ is defined as the sum 
\[
\varepsilon_{ij}=\sum_t\varepsilon_{ij}^t
\]
over all non-self-folded triangles of~$T$.

 \subsection{Construction of coordinates}

Let $(\mathbb{S},\mathbb{M})$ be a marked bordered surface. Let us  equip the  surface $\bS\setminus\bM$ with a complete, finite area hyperbolic metric with totally geodesic boundary. Then the universal cover of $\bS\setminus\bM$ can be identified with a subset of the hyperbolic plane~$\mathbb{H}$ with totally geodesic boundary. The deleted marked points on~$\mathbb{S}$ give rise to a set of points on the boundary~$\partial\mathbb{H}$. We call this the \emph{Farey set} and denote it by $\mathcal{F}_\infty(\bS,\bM)$. The action of $\pi_1(\bS^*)$ by deck transformations on the universal cover gives rise to an action of $\pi_1(\bS^*)$ on the Farey set. See \cite[Section 1.3]{IHES}  for more details and a picture. 

It will be useful to reformulate the definition of a framed local system in terms of the Farey set.

\begin{lemma}
\label{lem:equivalentdefinition}
A point of $X(\mathbb{S},\mathbb{M})$ is the same as a pair $(\rho,\psi)$ where $\rho:\pi_1(\bS^*)\rightarrow G$ is a group homomorphism and $\psi:\mathcal{F}_\infty(\mathbb{S},\mathbb{M})\rightarrow\mathbb{P}^1$ is map from the Farey set into~$\mathbb{P}^1$ such that 
\[
\psi(\gamma c)=\rho(\gamma)\psi(c)
\]
for any $\gamma\in\pi_1(\bS^*)$ and $c\in\mathcal{F}_\infty(\mathbb{S},\mathbb{M})$.
\end{lemma}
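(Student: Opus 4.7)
The plan is to establish a bijection between the two data by unpacking the structure of the Farey set as a $\pi_1(\bS^*)$-set. The key structural fact I would first isolate is that the orbits of $\pi_1(\bS^*)$ on $\mathcal{F}_\infty(\bS,\bM)$ are in natural bijection with $\bM$, and that the stabilizer of a Farey point sitting over $p\in\bM$ is $\langle \delta_p\rangle$ when $p\in\bP$ is a puncture, and is trivial when $p\in\bM\setminus\bP$ is a boundary marked point. This is geometric: lifts of a puncture to $\partial\mathbb{H}$ are parabolic fixed points for the associated cusp subgroup, while lifts of boundary marked points are ideal endpoints of lifts of boundary arcs and have no nontrivial stabilizer in $\pi_1(\bS^*)=\pi_1(\bS\setminus\bM)$, since removing points from the boundary of $\bS$ does not change the fundamental group.

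For the forward direction, given $(\rho,\lambda)\in X(\bS,\bM)$, I would choose once and for all a lift $\tilde p\in\mathcal{F}_\infty$ of each $p\in\bM$ (compatibly with the paths $\beta_p$ used to define $\delta_p$, so that $\delta_p$ really is the generator of the stabilizer of $\tilde p$). Then any $c\in\mathcal{F}_\infty$ can be written as $c=\gamma\cdot\tilde p$ for some $\gamma\in\pi_1(\bS^*)$ and a unique $p$, and I would set
\[
\psi(c)=\rho(\gamma)\cdot\lambda(p).
\]
Well-definedness reduces to the case of two representations $c=\gamma_1\tilde p=\gamma_2\tilde p$, so that $\gamma_1^{-1}\gamma_2$ stabilizes $\tilde p$; for boundary marked points this forces $\gamma_1=\gamma_2$, while for punctures $\gamma_1^{-1}\gamma_2\in\langle\delta_p\rangle$, and the condition $\rho(\delta_p)\lambda(p)=\lambda(p)$ built into $X(\bS,\bM)$ gives exactly what is needed. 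Equivariance is then immediate from the definition.

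For the reverse direction, given $(\rho,\psi)$, set $\lambda(p):=\psi(\tilde p)$. Equivariance of $\psi$ together with $\delta_p\cdot\tilde p=\tilde p$ yields
\[
\rho(\delta_p)\lambda(p)=\rho(\delta_p)\psi(\tilde p)=\psi(\delta_p\cdot\tilde p)=\psi(\tilde p)=\lambda(p),
\]
so $(\rho,\lambda)\in X(\bS,\bM)$. The two assignments are manifestly inverse to one another: applying the forward construction to $\lambda$ gives back $\psi$ because the values of the constructed map on $\tilde p$ agree with $\lambda(p)$ and the map is equivariant; applying the reverse construction to $\psi$ recovers $\lambda$ by definition.

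The only real content, as opposed to bookkeeping, is the identification of the orbit/stabilizer structure of $\mathcal{F}_\infty$; once that is in hand, the lemma is a standard equivariance/orbit-description statement, and this is the step I would spell out most carefully. No assumption beyond what is already in the excerpt (in particular, the setup in Section \ref{framedlocalsystems} and the Farey-set description cited from \cite{IHES}) is needed.
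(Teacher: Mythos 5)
Your proposal is correct and follows essentially the same route as the paper: choose lifts of the marked points as orbit representatives for the $\pi_1(\bS^*)$-action on the Farey set, and extend $\lambda$ to $\psi$ by equivariance, with the puncture condition $\rho(\delta_p)(\lambda(p))=\lambda(p)$ being exactly what makes the extension well-defined. You are in fact slightly more explicit than the paper about the orbit/stabilizer structure (trivial stabilizers at boundary marked points, $\langle\delta_p\rangle$ at punctures), which is the point the paper's proof leaves implicit when it "insists" on equivariance.
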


\begin{proof}
Fix a basepoint $x\in\bS^*$ and for each point $p\in\mathbb{M}$ an arc $\beta_p$ connecting $x$ to~$p$ whose interior lies in $\bS^*$. We have seen that these choices determine a bijection between $X(\mathbb{S},\mathbb{M})$ and the set of pairs $(\rho,\lambda)$ where $\rho:\pi_1(\mathbb{S}^*)\rightarrow G$ is a group homomorphism and $\lambda:\mathbb{M}\rightarrow\mathbb{P}^1$ is a map of sets such that 
\[
\rho(\delta_p)(\lambda(p))=\lambda(p)
\]
for every $p\in\mathbb{P}$ where $\delta_p$ is the homotopy class of loops surrounding~$p$ as before. Let $\tilde{x}$ be a point in the closure of the hyperbolic plane that projects to the basepoint $x$. Each curve $\beta_p$ lifts to a unique curve connecting $\tilde{x}$ to a point $c_p\in\mathcal{F}_\infty(\mathbb{S},\mathbb{M})$, and we set $\psi(c_p)=\lambda(p)$. The points $c_p$ obtained in this way form a complete set of representatives for the orbits of $\mathcal{F}_\infty(\mathbb{S},\mathbb{M})$ under the action of $\pi_1(\bS^*)$. Thus we can extend this construction and define a map $\psi:\mathcal{F}_\infty(\mathbb{S},\mathbb{M})\rightarrow\mathbb{P}^1$ by insisting that $\psi(\gamma c)=\rho(\gamma)\psi(c)$ for all $\gamma\in\pi_1(\bS^*)$ and $c\in\mathcal{F}_\infty(\mathbb{S},\mathbb{M})$.
\end{proof}

Fix an ideal triangulation $T$ of $(\mathbb{S},\mathbb{M})$. We can lift this triangulation to a collection of geodesic arcs in~$\mathbb{H}$ decomposing the universal cover into triangular regions. The endpoints of these arcs in~$\partial\mathbb{H}$ are identified with the points of~$\mathcal{F}_\infty(\mathbb{S},\mathbb{M})$. Thus, if we are given a point of $X(\mathbb{S},\mathbb{M})$, we can use the map $\psi$ to assign points of~$\mathbb{P}^1$ to the endpoints of each geodesic arc.

\begin{definition}
\label{def:fockgoncharovcoordinates}
Let $\mu=(\rho,\psi)$ be a general point of $X(\mathbb{S},\mathbb{M})$. To any arc $j$ of~$T$, we associate a number $X_j\in\mathbb{C}^*$ as follows:
\begin{itemize}
\item[(i)] Suppose $j$ is not the interior edge of a self-folded triangle. Let $\tilde{j}$ be a lift of $j$ to the universal cover. Then there are two triangles of the triangulation that share the side~$\tilde{j}$, and these form an ideal quadrilateral in~$\mathbb{H}$. Let $c_1$, $c_2$, $c_3$, and~$c_4$ be the vertices of this quadrilateral in the counterclockwise order so that the arc $\tilde{j}$ joins the vertices $c_1$ and~$c_3$. For each~$i$, let us define $z_i=\psi(c_i)$. Then we define $X_j$ as the cross ratio 
\[
X_j=\frac{(z_1-z_2)(z_3-z_4)}{(z_2-z_3)(z_1-z_4)}.
\]
Note that there are two ways of ordering the points $c_i$, and they give the same value for the cross ratio.

\item[(ii)] Suppose $j$ is the interior edge of a self-folded triangle. Let $k$ be the loop of this self-folded triangle. Using the same construction described in part~(i), we can associate cross ratios $Y_j$ and $Y_k$ to the arcs $j$ and~$k$, respectively. Then the number $X_j$ is defined by $X_j=Y_jY_k$.
\end{itemize}
\end{definition}

In this way, we associate to a general point $\mu$ a tuple of numbers $X_j\in\mathbb{C}^*$ indexed by arcs $j$ of the ideal triangulation $T$. Taken together, these define a rational map $X_T:X(\mathbb{S},\mathbb{M})\dashrightarrow(\mathbb{C}^*)^n$. This map is invariant under the action of~$G$ on $X(\mathbb{S},\mathbb{M})$ and so we have a rational map 
\[
\mathcal{X}(\mathbb{S},\mathbb{M})\dashrightarrow(\mathbb{C}^*)^n.
\]
We claim that this is a birational equivalence. More precisely, we have the following statement.

\begin{lemma}
For any ideal triangulation $T$ of $(\mathbb{S},\mathbb{M})$, there exists a regular map 
\[
\iota_T:(\mathbb{C}^*)^n\rightarrow X(\mathbb{S},\mathbb{M})
\]
such that $X_T\circ\iota_T=\mathrm{id}$.
\end{lemma}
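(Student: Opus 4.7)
The plan is to build $\iota_T$ using the reformulation in Lemma~\ref{lem:equivalentdefinition}, so that a tuple $(X_j)_{j \in J^T} \in (\bC^*)^n$ is sent to an equivariant pair $(\rho, \psi)$. First I would lift $T$ to a $\pi_1(\bS^*)$-invariant ideal triangulation of the universal cover of $\bS \setminus \bM$, whose ideal vertices are exactly the points of $\mathcal{F}_\infty(\bS, \bM)$. Fix a base triangle $t_0 \in T$ and a distinguished lift $\tilde{t}_0$ with vertices $c_1, c_2, c_3 \in \mathcal{F}_\infty$, and normalize by declaring $\psi(c_1) = \infty$, $\psi(c_2) = 0$, $\psi(c_3) = 1$. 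This normalization uses up the $G$-ambiguity in lifting a point of $\cX(\bS,\bM)$ to $X(\bS,\bM)$.

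The core of the construction is a propagation procedure. I introduce auxiliary ``unfolded'' parameters $Y_j$: set $Y_j = X_j$ unless $j$ is the interior edge of a self-folded triangle with loop $k$, in which case $Y_j = X_j / X_k$. This inverts the convention in Definition~\ref{def:fockgoncharovcoordinates}(ii). Then I propagate $\psi$ across the lifted triangulation: whenever $\psi$ is defined on the three vertices of a triangle, the $\psi$-value at the fourth vertex of the triangle across a lifted edge $\tilde{j}$ is uniquely determined by inverting the cross-ratio formula with parameter $Y_j$. Since the dual graph of the lifted triangulation is a tree, this yields a single-valued map $\psi \colon \mathcal{F}_\infty(\bS, \bM) \to \bP^1$, and $Y_j \in \bC^*$ ensures the new value is always distinct from the existing three. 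I then define $\rho \colon \pi_1(\bS^*) \to G$ by equivariance: for $\gamma \in \pi_1(\bS^*)$, the values of $\psi$ on $\gamma \tilde{t}_0$ form three points in general position in $\bP^1$, and $\rho(\gamma)$ is the unique element of $G$ carrying $(\infty, 0, 1)$ to them. Uniqueness of the propagation immediately implies that $\rho$ is a group homomorphism and that $\psi$ is $\rho$-equivariant. The condition $\rho(\delta_p)(\psi(c_p)) = \psi(c_p)$ for each puncture $p \in \bP$ follows because any lift $c_p \in \mathcal{F}_\infty$ is fixed by a conjugate of $\delta_p$, and equivariance then forces $\psi(c_p)$ to be $\rho(\delta_p)$-invariant.

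The identity $X_T \circ \iota_T = \mathrm{id}$ then holds by construction: the cross-ratios computed from the propagated $\psi$ reproduce the $Y_j$, and hence the $X_j$ via $X_j = Y_j Y_k$ in the self-folded case. I expect the main technical obstacle to lie precisely in the self-folded triangles. In the universal cover, the two edges of a self-folded triangle incident to the outer vertex are both lifts of $j$, so crossing one such edge via propagation lands one in a second lift of the same self-folded triangle, related by a conjugate of $\rho(\delta_p)$. One must check that the composite of propagating around the lifted loop $k$ and then across $\tilde{j}$ produces exactly the multiplicative correction appearing in $X_j = Y_j Y_k$, thereby confirming that the prescription $Y_j = X_j / X_k$ really yields the cross-ratio computed by part~(ii) of Definition~\ref{def:fockgoncharovcoordinates}.
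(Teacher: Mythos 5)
Your proposal is correct and follows essentially the same route as the paper's proof: define the unfolded parameters $Y_j$ (with $Y_j = X_j/X_k$ at interior edges of self-folded triangles), propagate $\psi$ triangle by triangle through the lifted triangulation of the universal cover by inverting cross-ratios, and recover $\rho$ from equivariance. The only differences are cosmetic (you normalize the base triangle to $(\infty,0,1)$ where the paper assigns arbitrary distinct points) or amount to making explicit checks the paper leaves to the reader, such as the tree structure of the dual graph and the self-folded consistency $X_j = Y_jY_k$.
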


\begin{proof}
Suppose we are given a collection of nonzero complex numbers $X_j$ indexed by the arcs of~$T$. We replace these by new numbers $Y_j$ as follows:
\begin{itemize}
\item[(i)] Set $Y_j=X_j$ if $j$ is not the interior edge of a self-folded triangle.
\item[(ii)] Set $Y_j=X_j/X_k$ if $j$ is the interior edge of a self-folded triangle and $k$ is the loop of this self-folded triangle.
\end{itemize}
Consider an ideal triangle $\tilde{t}_0$ in~$\mathbb{H}$ which projects to an ideal triangle $t_0$ on the surface~$\mathbb{S}$. We assign three arbitrary distinct points of $\bP^1$ to the vertices of $\tilde{t}_0$. Next consider an ideal triangle $t_1$ which shares an edge $j$ with~$t_0$ in the ideal triangulation. There is an ideal triangle $\tilde{t}_1$ adjacent to $\tilde{t}_0$ in the universal cover which projects to~$t_1$, and we have assigned points of $\bP^1$ to two of its vertices. We can assign a point of~$\bP^1$ to the remaining vertex in such a way that the resulting cross ratio is the number $Y_j$. Continuing in this way, we construct a function $\psi:\mathcal{F}_\infty(\mathbb{S},\mathbb{M})\rightarrow\bP^1$. To construct a monodromy representation, let $\gamma\in\pi_1(\bS^*)$ and let $\tilde{t}$ be an ideal triangle in the universal cover. The element $\gamma$ acts as a deck transformation on the universal cover, taking the ideal triangle $\tilde{t}$ to some other ideal triangle $\tilde{t}'$. We define $\rho(\gamma)$ to be the unique element of $G$ that takes the points of $\bP^1$ assigned to the vertices of~$\tilde{t}$ to the corresponding points assigned to the vertices of $\tilde{t}'$. This defines a homomorphism $\rho:\pi_1(\bS^*)\rightarrow G$, and the pair $(\psi,\rho)$ is a point of $X(\mathbb{S},\mathbb{M})$. It is easy to check that this defines a regular map to $X(\mathbb{S},\mathbb{M})$ and that $(\psi,\rho)$ has the numbers $X_j$ as co-ordinates.
\end{proof}

\subsection{Signed and tagged triangulations}
\label{flops}
 
Let $(\bS,\bM)$ be a marked bordered surface. A \emph{signed triangulation} of $(\bS,\bM)$ is a pair $(T,\epsilon)$ consisting of an ideal triangulation $T$ and a function \[\epsilon\colon \bP\to \{\pm 1\}.\]
Two signed triangulations $(T_i,\epsilon_i)$ are  considered to be equivalent  if the underlying ideal triangulations $T_i$ are the same and the signings $\epsilon_i$  differ only at punctures  $p\in \bP$ of valency one. Equivalence classes of signed triangulations are called \emph{tagged triangulations}.

Suppose $\tau$ is a tagged triangulation which is represented by a signed triangulation $(T,\epsilon)$. By a \emph{tagged arc} of~$\tau$ we mean an arc of the ideal triangulation~$T$. Let $(T,\epsilon')$ be another signed triangulation where $\epsilon'$ differs from $\epsilon$ at a single puncture $p$ where the triangulation $T$ has valency one. Let $j$ be the unique edge of $T$ which is incident to this puncture $p$, and let $k$ be the encircling edge. Then the tagged arc represented by $j$ in $(T,\epsilon)$ is considered to be equivalent to the tagged arc represented by $k$ in the other signed triangulation $(T,\epsilon')$.

If $\Tri(\bS,\bM),  \Tri_\pm(\bS,\bM),  \Tri_{\bowtie}(\bS,\bM)$ denote the sets of ideal, signed and tagged  triangulations of $(\bS,\bM)$ respectively, there is a diagram 
\begin{equation}
\begin{aligned}
\label{triangulationsdiagr}
\xymatrix{ 
&\Tri_{\pm}(\bS,\bM)\ar[d]^{q} \\
\Tri(\bS,\bM) \ar[r]_{i} \ar[ur]^{j}&\Tri_{\bowtie}(\bS,\bM)}
\end{aligned}
\end{equation}
where $q$ is the obvious quotient map and the  arrows $i$ and $j$ are embeddings obtained by  considering an ideal triangulation as a signed, and hence a tagged triangulation,  using the signing $\epsilon\equiv +1$. 
 
The flipping  operation extends to signed triangulations in the obvious way: we flip the underlying triangulation, keeping the signs constant. We say that two tagged triangulations are related by a flip if they can be represented by signed triangulations which differ by a flip.

The sets appearing in the diagram \eqref{triangulationsdiagr} can be considered as graphs, with two (ideal, signed, tagged) triangulations being connected by an edge if they differ by a flip.  The maps in the diagram then become maps of graphs. The important point is that, unlike the graph $\Tri(\bS,\bM)$ of ideal triangulations, the graph $\Tri_{\bowtie}(\bS,\bM)$ of tagged triangulations  is $n$-regular.

It is well-known that any two ideal triangulations of $(\bS,\bM)$ are related by a finite chain of flips; thus the graph $\Tri(\bS,\bM)$ is always connected \cite[Prop. 3.8]{FST}. The graph $\Tri_{\bowtie}(\bS,\bM)$ is also connected, except  when $(\bS,\bM)$ is a closed surface with a single puncture  $p\in \bP$, in which case the graph   $\Tri_{\bowtie}(\bS,\bM)$ has two connected components corresponding to the two possible choices of signs $\epsilon(p)$ \cite[Prop. 7.10]{FST}.

\subsection{Fock-Goncharov co-ordinates for tagged triangulations}

Let $(\bS,\bM)$ be a marked bordered surface and consider a framed local system on $(\bS,\bM)$. Near each puncture $p\in \bP$ there is a chosen flat section $\ell(p)$ of the associated $\bP^1$ bundle $\cL$. If the monodromy of the local system around this puncture has distinct eigenvalues then there are exactly two possible choices for $\ell(p)$, and we can consider the birational morphism which exchanges these two choices.

\begin{lemma}
There is a natural birational action of the group $\{\pm 1\}^{\bP}$ on the stack $\cX(\bS,\bM)$ of framed local systems which fixes the underlying local systems and exchanges the two generically possible choices of framings at the punctures.
\end{lemma}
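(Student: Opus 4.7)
The plan is to define the action explicitly on a suitable Zariski open substack using the two-fixed-point structure of non-degenerate puncture monodromies, and to invoke the usual principle that a regular action on a dense open extends to a birational action on the whole stack.

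First I would introduce the open subset $U\subset X(\bS,\bM)$ consisting of those rigidified framed local systems $(\rho,\lambda)$ such that for every puncture $p\in \bP$, the element $\rho(\delta_p)\in G$ is neither parabolic nor the identity. Lifting $\rho(\delta_p)$ to $M_p\in \SL_2(\bC)$, this condition is cut out by the non-vanishing of the discriminants $\tr(M_p)^2-4$, so $U$ is Zariski open, and it is dense since one can exhibit explicit representations in which every puncture monodromy is diagonalisable with distinct eigenvalues. On $U$, each $\rho(\delta_p)$ has exactly two fixed points on $\bP^1$, one of which is $\lambda(p)$. For each $p\in \bP$ I would then define $\sigma_p\colon U\to U$ by $(\rho,\lambda)\mapsto(\rho,\lambda')$, where $\lambda'(q)=\lambda(q)$ for $q\ne p$ and $\lambda'(p)$ is the second fixed point of $\rho(\delta_p)$. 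Writing $\rho(\delta_p)$ as the M\"obius transformation $x\mapsto (ax+b)/(cx+d)$, the fixed points satisfy $cx^2+(d-a)x-b=0$, and Vieta's formulas express $\lambda'(p)$ as a rational function of $\lambda(p)$ and the entries of $\rho(\delta_p)$, so $\sigma_p$ is a morphism of algebraic varieties.

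Next I would verify the structural properties of these operators. Involutivity $\sigma_p^2=\id$ is immediate from the symmetry of the two fixed points, and $\sigma_p$ commutes with $\sigma_q$ for distinct punctures since they modify independent coordinates of $\lambda$; this yields an action of $\{\pm 1\}^{\bP}$ on $U$. Moreover, for any $g\in G$ the fixed points of $g\rho(\delta_p)g^{-1}$ are the images under $g$ of those of $\rho(\delta_p)$, so each $\sigma_p$ commutes with the $G$-action on $X(\bS,\bM)$ and therefore descends to an action on the open substack $U/G\subset \cX(\bS,\bM)$. By construction the underlying local system is fixed and the two possible eigenline framings at each puncture are swapped. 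Since $U$ is Zariski dense, this yields the desired birational action on $\cX(\bS,\bM)$. There is no serious obstacle in the argument; everything reduces to a direct algebraic calculation once the correct open subset is identified, the only mildly delicate point being to check that the resulting map is regular (and not merely set-theoretic) on $U$, which is handled by the Vieta formula above.
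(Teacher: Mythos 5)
Your proposal is correct and follows essentially the same route as the paper: the paper's proof simply observes that on the open subset of $X(\bS,\bM)$ where all puncture monodromies are semi-simple with distinct eigenvalues, exchanging the two eigenlines at a puncture is an algebraic involution. You have merely filled in the routine details (openness via the discriminant, regularity via Vieta, compatibility with the $G$-action).
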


\begin{proof}
Consider the variety $X(\bS,\bM)$ of rigidified framed local systems. There is an open subset where all puncture monodromies are semi-simple with distinct eigenvalues. On this open subset the operation of exchanging the possible eigenlines at a given puncture gives an algebraic involution.
\end{proof}

The group $\{\pm 1\}^{\bP}$ also acts on the set of signed and tagged triangulations in the obvious way. A signed triangulation $(T,\epsilon)$ can then be viewed as the result of acting on the ideal triangulation $T$ by the signing $\epsilon$ viewed as an element of the group  $\{\pm 1\}^{\bP}$.

\begin{defn}
The Fock-Goncharov co-ordinate of a framed local system $(\cG,\phi)$ on $(\bS,\bM)$ with respect to an arc $j$ of the signed triangulation $(T,\epsilon)$ is defined to be the Fock-Goncharov co-ordinate with respect to $j$ of the framed local system obtained by applying the group element $\epsilon\in\{\pm 1\}^{\bP}$ to the framed local system $(\cG,\phi)$, whenever this quantity is well-defined.
\end{defn}

We thus obtain birational maps 
\begin{equation}
\label{bira}
X_{(T,\epsilon)}\colon \mathcal{X}(\mathbb{S},\mathbb{M})\dashrightarrow(\mathbb{C}^*)^n
\end{equation}
for any signed triangulation.

\begin{lemma}
The Fock-Goncharov co-ordinate of a framed local system with respect to an arc of a signed triangulation depends only on the underlying tagged arc.
\end{lemma}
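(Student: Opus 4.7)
The plan is to reduce to the case of signings that differ at a single valency-one puncture and then verify the coordinate identity by an explicit cross-ratio computation.

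By definition, two signed triangulations $(T,\epsilon)$ and $(T,\epsilon')$ represent the same tagged triangulation if and only if $\epsilon$ and $\epsilon'$ differ only at punctures of valency one in $T$. So it suffices to treat the case where $\epsilon$ and $\epsilon'$ differ at exactly one such puncture $p$. Let $j$ be the unique edge of $T$ incident to $p$ and $k$ the encircling edge of the self-folded triangle around $p$. The tagged-arc equivalence identifies $j$ in $(T,\epsilon)$ with $k$ in $(T,\epsilon')$, and symmetrically $k$ in $(T,\epsilon)$ with $j$ in $(T,\epsilon')$; every other arc $\ell$ is identified with itself. Accordingly the lemma reduces to proving $X_\ell^{(T,\epsilon)}=X_\ell^{(T,\epsilon')}$ for $\ell\neq j,k$, together with the crossed equalities $X_j^{(T,\epsilon)}=X_k^{(T,\epsilon')}$ and $X_k^{(T,\epsilon)}=X_j^{(T,\epsilon')}$.

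The first identity is essentially combinatorial. Since $p$ has valency one in $T$, it appears as a vertex only of the self-folded triangle, whose edges are $j$ and $k$. The cross-ratio defining $X_\ell$ (or, if $\ell$ is itself the interior edge of a self-folded triangle at some other puncture, both quadrilaterals used to produce $Y_\ell$ and its loop factor) uses only the endpoints of the two triangles adjacent to $\ell$. For $\ell\neq j,k$, neither of these triangles is the self-folded triangle at $p$, so no lift of $p$ figures among the relevant vertices. Since flipping the signing at $p$ alters $\psi$ only at lifts of $p$, the value of $X_\ell$ is unchanged.

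For the crossed equalities I would compute directly. Fix a lift of the self-folded triangle with vertices $P$ (a lift of $p$) and $Q_1,Q_2$ (two lifts of $q$), so that $PQ_1$ and $PQ_2$ are lifts of $j$ and $Q_1Q_2$ is a lift of $k$. On the far side of $PQ_1$ lies a second lift of the self-folded triangle, whose third vertex $Q_1'$ is another lift of $q$; on the far side of $Q_1Q_2$ lies the lift of the unique non-self-folded triangle containing $k$, with third vertex $R$. Let $M$ denote the monodromy around the loop about $P$ and choose projective coordinates on $\bP^1$ in which its two fixed points are $0$ and $\infty$ and it acts as $z\mapsto\mu z$. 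Then $\psi(P)\in\{0,\infty\}$ is determined by $\epsilon(p)$, while equivariance of $\psi$ forces $\psi(Q_2)=\mu\,\psi(Q_1)$ and $\psi(Q_1')=\mu^{-1}\psi(Q_1)$. Writing $a=\psi(Q_1)$ and $b=\psi(R)$, a direct computation (taking the usual limit when $\psi(P)=\infty$) gives $Y_j^+=\mu^{-1}$, $Y_j^-=\mu$, $Y_k^+=(b-\mu a)/(a-b)$, $Y_k^-=(b-\mu a)/(\mu(a-b))$. Since $X_j=Y_jY_k$ and $X_k=Y_k$, one reads off at once $X_j^+=X_k^-=(b-\mu a)/(\mu(a-b))$ and $X_k^+=X_j^-=(b-\mu a)/(a-b)$, which are the required crossed equalities.

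The only delicate point is the equivariance relation $\psi(Q_2)=M\,\psi(Q_1)$: the deck transformation along $\tilde{k}$ is a conjugate of $\rho(\delta_p)$, but the correct representative is precisely the monodromy along a small loop around the chosen lift $P$, which is how $M$ was defined. Once this identification is in place the cross-ratio arithmetic is routine, and I anticipate no further obstacle.
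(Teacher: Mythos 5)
Your proof is correct and follows essentially the same route as the paper: reduce to two signings differing at a single valency-one puncture, and use the fact that flipping the framing there sends $Y_j\mapsto Y_j^{-1}$ and $Y_k\mapsto Y_jY_k$, so that $X_j=Y_jY_k$ and $X_k=Y_k$ are exchanged while the remaining coordinates are untouched. The only difference is that you establish this transformation law by a direct cross-ratio computation in the universal cover (which checks out), whereas the paper simply cites the proof of Lemma~12.3 of \cite{IHES}.
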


\begin{proof}
Let $T$ be an ideal triangulation of $(\mathbb{S},\mathbb{M})$, and suppose $p\in\mathbb{P}$ is a puncture which is incident to exactly one arc of~$T$. Call this edge $j$ and let $k$ denote the encircling edge. Given a general point of $X(\mathbb{S},\mathbb{M})$, we can define cross ratios $Y_j$ and~$Y_k$ as in part~(ii) of Definition~\ref{def:fockgoncharovcoordinates}. If we modify the framed local system by the action of the element of the group $\{\pm1\}^{\mathbb{P}}$ corresponding to the puncture~$p$, then by the proof of Lemma~12.3 in~\cite{IHES}, we know that the numbers $Y_j$ and~$Y_k$ transform to~$1/Y_j$ and~$Y_jY_k$, respectively. It now follows from Definition~\ref{def:fockgoncharovcoordinates} that the Fock-Goncharov co-ordinates associated to two signed arcs are equal if these signed arcs represent the same tagged arc.
\end{proof}

Note however that the inverse $X_\tau^{-1}$ of the map~\eqref{bira} may not be an open embedding for a general tagged triangulation~$\tau$.

\begin{example}
Suppose $\bS$ is a disc and $\bM=\{p_0,p_1,p_2\}$ where $p_0$ is a point in the interior of $\bS$ and $p_1$ and $p_2$ are marked points on $\partial\bS$. Consider the tagged triangulation $\tau$ that consists of an arc from~$p_0$ to each boundary marked point and the signing $\epsilon(p_0)=-1$. Then the inverse $X_\tau^{-1}$ is not regular at $(-1,-1)\in(\mathbb{C}^*)^2$. This follows from the proof of Lemma~12.3 in~\cite{IHES}, which gives an explicit expression in coordinates for the framing at $p_0$ after we act by the group element $\epsilon\in\{\pm1\}^{\mathbb{P}}$.
\end{example}

The signed mapping class group of a marked bordered surface $(\bS,\bM)$ is defined to be the semi-direct product
\[
\MCG^\pm(\bS,\bM)=\MCG(\bS,\bM)\ltimes \{\pm 1\}^{\bP}.
\]
This group acts birationally on the stack $\cX(\bS,\bM)$ of framed local systems and on the manifold $\Proj^*(\bS,\bM)$ in an obvious way, and our monodromy map $F$ is clearly equivariant. It also acts on the set of signed or tagged triangulations in the obvious way. By the above definition, one has
\[
X_{g(\tau)}(g(\mu))=X_\tau(\mu)
\]
for any element $g$ of this group.

\subsection{Behaviour under flips}

If $(T',\epsilon)$ is the signed triangulation obtained from $(T,\epsilon)$ by performing a flip at the edge~$k$, then the set $J=J^T$ of arcs in~$T$ is naturally in bijection with the set $J'=J^{T'}$ of arcs in~$T'$, and thus we can use the construction described above to associate a number $X_j'\in\mathbb{C}^*$ to each $j\in J'=J$. The following proposition relates these to the numbers $X_j$ computed using the signed triangulation $(T,\epsilon)$.

\begin{proposition}
The numbers $X_j'$ are given in terms of the numbers $X_j$~($j\in J$) by the formula
\begin{align*}
X_j'=
\begin{cases}
X_k^{-1} & \mbox{if } j=k \\
X_j{(1+X_k^{-\sgn(\varepsilon_{jk})})}^{-\varepsilon_{jk}} & \mbox{if } j\neq k.
\end{cases}
\end{align*}
\end{proposition}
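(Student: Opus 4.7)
The plan is to verify the formula by an explicit cross-ratio calculation in the universal cover of $\bS\setminus \bM$, working case by case on the position of the arc $j$ relative to the flipped arc $k$. I would first reduce to the case of trivial signing $\epsilon\equiv +1$: since the $\{\pm 1\}^{\bP}$-action on $\cX(\bS,\bM)$ is a birational automorphism and the flip preserves $\epsilon$, it suffices to verify the formula when the underlying triangulation is ideal, provided one checks compatibility separately whenever the flipped edge $k$ touches a signed puncture.

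For the case $j=k$, lift $k$ to the universal cover to obtain an ideal quadrilateral with vertices $c_1,c_2,c_3,c_4\in \cF_\infty(\bS,\bM)$ in counterclockwise cyclic order, with $k$ the diagonal joining $c_1$ and $c_3$; after the flip, the new arc is the diagonal from $c_2$ to $c_4$. Setting $z_i=\psi(c_i)$ and applying Definition~\ref{def:fockgoncharovcoordinates}, both $X_k$ and $X_k'$ become explicit rational expressions in the $z_i$, and a one-line algebraic simplification gives $X_k\cdot X_k'=1$, i.e. $X_k'=X_k^{-1}$.

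For $j\neq k$, the flip only alters the quadrilaterals one of whose constituent triangles contains $k$ as a side. If $j$ does not bound such a triangle then the quadrilateral defining $X_j$ is literally unchanged, so $X_j'=X_j$, and in this case $\varepsilon_{jk}=0$, so the formula holds trivially. Otherwise, lift to the universal cover the pair of triangles sharing $k$ together with any additional triangle of $T$ having $j$ as a side; this produces an ideal pentagon whose five vertices $z_1,\ldots,z_5\in \bP^1$ determine all of $X_j$, $X_j'$, and $X_k$ as cross-ratios of appropriate four-element subsets. Normalizing three of the vertices to $0,1,\infty$ using the $\PGL_2(\bC)$-action reduces the identity to a two-parameter algebraic manipulation. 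The cases $\varepsilon_{jk}=+1$ and $\varepsilon_{jk}=-1$ are mirror images related by reversing the cyclic orientation in the pentagon, and $|\varepsilon_{jk}|=2$ arises precisely when $j$ borders $k$ on both sides, which is handled by applying the $|\varepsilon_{jk}|=1$ calculation twice.

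The principal obstacle will be the bookkeeping around self-folded triangles, which intervene both in the definition of $X_j$ (through the product convention $X_j=Y_jY_k$ of Definition~\ref{def:fockgoncharovcoordinates}(ii)) and in the definition of $\varepsilon_{jk}$ (through the auxiliary map $\pi_T$). For each configuration in which $j$ or $k$ plays the role of an interior edge or an encircling loop of a self-folded triangle, one must verify that the transformation of the underlying cross-ratios $Y_j, Y_k$ under the flip, when repackaged via the product rule, still produces the claimed cluster formula. This amounts to a finite but somewhat tedious case analysis, essentially reproducing the calculation of Fock--Goncharov \cite[Section 12]{IHES}; the tagged triangulation framework introduced in Section~\ref{flops} then ensures that the signing interacts with flips in the expected way and that no configuration is missed.
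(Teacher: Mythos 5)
Your proposal is correct and follows essentially the same route as the paper: a direct cross-ratio verification in the universal cover, with the generic cases handled by the pentagon/quadrilateral computation and the real work concentrated in the self-folded-triangle bookkeeping via the $Y_j$ cross-ratios (the paper in fact writes out only that case and leaves the rest to the reader). One small simplification available to you: the reduction to trivial signing is immediate, since $X_{(T,\epsilon)}=X_T\circ\epsilon$ and the flip fixes $\epsilon$, so no separate compatibility check at signed punctures is needed.
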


\begin{proof}
The most nontrivial case to consider is when the flip produces a self-folded triangle as shown in Figure~\ref{fig:makeselffolded}. We will prove the formula in this case and leave the other cases to the reader.
\begin{figure}[ht]
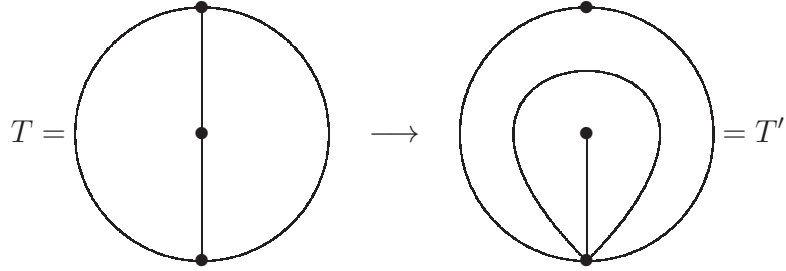
 \begin{center}
\[
T=
\xy 0;/r.50pc/:
(0,0)*\xycircle(8,8){-};
(0,-8)*{}="S";
(0,8)*{}="N";
"N";"S" **\dir{-};
(0,0)*{\bullet};
"N"*{\bullet};
"S"*{\bullet};
\endxy
\quad
\longrightarrow
\quad
\xy 0;/r.50pc/:
(0,0)*\xycircle(8,8){-};
(0,-8)*{}="S";
(0,8)*{}="N";
(0,0);"S" **\dir{-};
(0,0)*{\bullet};
"N"*{\bullet};
"S"*{\bullet};
"S";"S" **\crv{(-16,8) & (16,8)};
\endxy
=T'
\]
\caption{A flip resulting in a self-folded triangle.\label{fig:makeselffolded}}
\end{center} \end{figure}
We assume for simplicity that our surface is not a three-punctured sphere so that the edges on the sides of the above triangulation are not identified. Given a framed local system, we act by $\epsilon\in\{\pm1\}^{\mathbb{P}}$ to get a new framed local system and then let $Y_j$ be the cross ratio defined for any edge $j$ in~$T$ using this modified framed local system. These numbers transform as in Figure~\ref{fig:transformselffolded}.
\begin{figure}[ht]
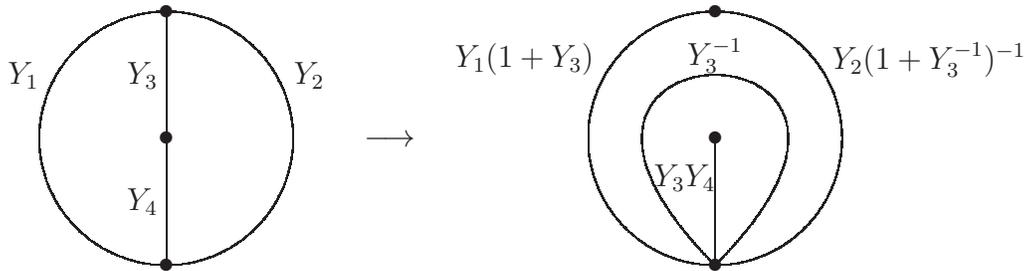
 \begin{center}
\[
{
\xy 0;/r.50pc/:
(0,0)*\xycircle(8,8){-};
(0,-8)*{}="S";
(0,8)*{}="N";
"N";"S" **\dir{-};
(0,0)*{\bullet};
(-1.5,4)*{Y_3};
(-1.5,-4)*{Y_4};
(-9,4)*{Y_1};
(9,4)*{Y_2};
"N"*{\bullet};
"S"*{\bullet};
\endxy
\quad
\longrightarrow
\quad
\xy 0;/r.50pc/:
(0,0)*\xycircle(8,8){-};
(0,-8)*{}="S";
(0,8)*{}="N";
(0,0);"S" **\dir{-};
(0,0)*{\bullet};
(0,5.25)*{Y_3^{-1}};
(-1.9,-2.5)*{Y_3Y_4};
(-12,5)*{Y_1(1+Y_3)};
(13.5,5)*{Y_2(1+Y_3^{-1})^{-1}};
"N"*{\bullet};
"S"*{\bullet};
"S";"S" **\crv{(-16,8) & (16,8)};
\endxy
}
\]
\caption{Transformation of cross ratios.\label{fig:transformselffolded}}
\end{center} \end{figure}
This implies the above formula by Definition~\ref{def:fockgoncharovcoordinates}. Thus we have proved the formula for flips resulting in a self-folded triangle, assuming the surface is not a three-punctured sphere. One can check the formula in this remaining case in a similar manner.
\end{proof}

\subsection{Regularity of points of $\cX^*(\bS,\bM)$}

A simple piece of combinatorics shows that at each point of $\cX^*(\bS,\bM)$ there is a tagged triangulation of $(\bS,\bM)$ such that the corresponding Fock-Goncharov co-ordinates are well-defined and nonzero at the given point.

\begin{theorem}
\label{clev}
Suppose that the framed local system $(\cG,\ell)$ is non-degenerate. Then there exists a signed triangulation such that the corresponding Fock-Goncharov co-ordinates are regular and non-zero at $(\cG,\ell)$. When $\bS$ has non-empty boundary the signing can be taken to be trivial. 
\end{theorem}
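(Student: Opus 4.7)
The plan is to translate \emph{regular and non-zero Fock–Goncharov coordinates} into a geometric condition on the framings along the edges of the triangulation, and then to use non-degeneracy to construct a signed triangulation meeting that condition.

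First, the cross-ratio formula $X_j = \frac{(z_1-z_2)(z_3-z_4)}{(z_2-z_3)(z_1-z_4)}$ of Definition \ref{def:fockgoncharovcoordinates} is regular and non-zero precisely when each of the four adjacent pairs $(z_i,z_{i+1})$ (indices mod $4$) consists of distinct points of $\bP^1$. These four pairs are the four sides of the quadrilateral around $j$, so the condition on $X_j$ is that each such side is a \emph{good edge}: an edge whose two endpoint framings, parallel-transported along it (with $\epsilon$ applied at puncture endpoints), are distinct in a common fibre of the $\bP^1$-bundle $\cL$. The self-folded case reduces similarly via the factorisation $X_j = Y_j Y_k$. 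Taking the conjunction over all $j \in T$, simultaneous regularity and non-vanishing of all $X_j$ is equivalent to the assertion that every edge of $T$ appearing as a side of some quadrilateral is a good edge with respect to $\epsilon$.

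Second, non-(D1) is exactly the statement that every boundary segment is a good edge. So the task reduces to producing a signed triangulation $(T,\epsilon)$ in which every arc of $T$ is a good edge; in the boundary case we aim to do this with $\epsilon \equiv +1$.

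Third, I would argue by contradiction: assume that every signed (respectively, ideal) triangulation has a bad arc. Using the connectedness of the tagged flip graph $\Tri_{\bowtie}(\bS,\bM)$ together with the mutation formula for Fock–Goncharov coordinates under flips, I would propagate this hypothesis and show that the framings must fit into a globally coherent pattern: an unordered pair of locally defined flat sections $\ell_\pm$ of $\cL$, possibly permuted by a homomorphism $\sigma\colon \pi_1(\bS)\to\{\pm1\}$, such that each $\ell(p)$ agrees with $\ell_+(p)$ or $\ell_-(p)$. This is exactly condition (D2). In the closed-surface case, the same argument produces instead condition (D3), and non-(D3) is needed to reach a contradiction. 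When $\bS$ has non-empty boundary, the boundary framings cannot be altered by any signing, so the argument already produces the witness to (D2) using the original $\ell$, and hence an ideal triangulation (with $\epsilon \equiv +1$) suffices.

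The main obstacle is the last step: upgrading the local coincidences along bad arcs — pairs of parallel-transported framings that agree at endpoints — to globally defined pairs of flat sections $\ell_\pm$ on $\bS^*$. This requires careful control over how the set of bad arcs redistributes under flips and sign-changes, together with a separate treatment of self-folded triangles through $X_j = Y_j Y_k$. In the closed case, the Faltings-type input used in Theorem \ref{nondeg} reappears to rule out the (D3) configuration.
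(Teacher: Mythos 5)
Your reduction of the statement to finding a (signed) triangulation all of whose edges are ``good'' is correct and matches the paper's strategy, but the core of the proof --- actually producing such a triangulation, or deriving (D2) from its non-existence --- is exactly the step you defer, and your proposed mechanism for it (connectedness of the tagged flip graph plus the coordinate mutation formula) does not supply it. The mutation formula tells you how coordinates transform where they are already regular; it gives no control over how the set of bad arcs redistributes under flips, which is the whole difficulty. The paper instead runs an extremal argument: choose an ideal triangulation with the \emph{maximal} number of good arcs, and observe the parity fact that a triangle has $0$, $1$ or $3$ bad edges (two coincidences among three framings force the third), so every triangle is ``good'', ``bad'' (one bad edge) or ``very bad'' (all bad). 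A good triangle adjacent to a bad one yields a pentagon in the universal cover with at least three distinct framing lines, so one can re-triangulate the pentagon with two good diagonals, contradicting maximality; a bad triangle adjacent to a very bad one is killed by flipping the shared bad edge; hence all triangles are bad, and the resulting ``exactly two framing lines per triangle, matching across shared edges'' configuration is precisely the local data that glues into the pair $\ell_\pm$ of condition (D2). Without this (or an equivalent) local surgery argument, your step three is an assertion, not a proof.

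Two further points. First, in the closed case your appeal to non-(D3) is not quite how the argument closes: if every edge of some triangulation is bad, all framings come from a single flat section $\ell$, and non-degeneracy does \emph{not} immediately contradict this --- (D3) additionally requires all puncture monodromies to be parabolic or the identity. What non-(D3) actually buys is the existence of a puncture $p$ with diagonalisable non-identity monodromy, at which one applies the sign change $\sigma_p$ to obtain a new (still non-degenerate, by Remark \ref{remas}(v)) framed local system with a good edge, runs the boundary-case argument there, and transports the resulting ideal triangulation back as the signed triangulation $\sigma_p(T)$. This is why the signing cannot always be taken trivial for closed surfaces. Second, Faltings' theorem plays no role here: it is used in Theorem \ref{nondeg} to show the \emph{monodromy of a projective structure} never satisfies (D3); in the present theorem non-degeneracy of $(\cG,\ell)$ is a hypothesis, so (D3) is excluded for free.
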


\begin{proof}
Let us call an edge of an ideal triangulation bad if the two framings associated to the ends of the edge agree under parallel transport along the edge. Otherwise we call the edge good. Condition~(D1) of Definition \ref{rn} ensures that a boundary segment is a good edge in any ideal triangulation. Let us first assume that there is an ideal triangulation with at least one good edge. This is automatic if $\bS$ has a non-empty boundary. We will then prove the existence of an ideal triangulation all of whose edges are  good.
 
Consider an ideal triangulation $T$ of $(\bS,\bM)$ with a maximal number of good arcs.  We need to show that in fact all arcs are good.  We call a triangle good if all of its edges are good, bad if exactly one of its edges is bad, and very bad if all of its edges are bad. We can assume the triangulation contains at least one bad or very bad triangle since otherwise we are done.

Suppose that a good triangle $t_1$ shares an edge $e_1$ with a bad triangle $t_2$. Then $e_1$ is good so there must be another triangle $t_3$ sharing a bad edge $e_2$ with $t_2$. Figure~\ref{fig:configurations} shows all possible configurations of the triangles $t_1,t_2,t_3$ up to reversal of orientation. The union $t_1\cup t_2\cup t_3$ can be lifted to a pentagon $p$ in the universal cover of $\mathbb{S}^*$. The framing lines determine at least three distinct lines associated to the vertices of~$p$ since the triangle $t_1$ is good. It is therefore possible to replace the edges $e_1$ and $e_2$, one of which is bad, by two good edges. This contradicts maximality.

\begin{figure}[ht]
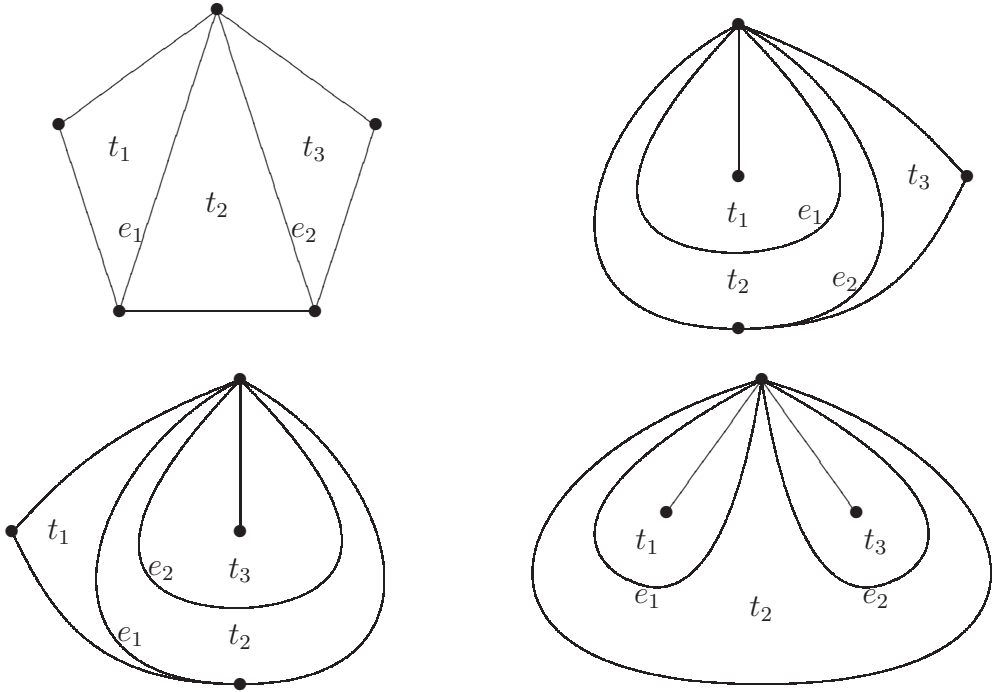
 \begin{center}
\[
\begin{array}{cc}
{\xy /l1.75pc/:
{\xypolygon5"A"{~:{(-3,0):}}};
{"A5"\PATH~={**@{-}}'"A2"'"A4"};
"A1"*{\bullet};
"A2"*{\bullet};
"A3"*{\bullet};
"A4"*{\bullet};
"A5"*{\bullet};
(2.75,-0.5)*{t_1};
(1,0.5)*{t_2};
(-0.75,-0.5)*{t_3};
(2.55,1)*{e_1};
(-0.57,1)*{e_2};
\endxy}
&
{\xy /l0.6pc/:
(0,4)*{}="M";
(0,-8)*{}="N";
(0,8)*{}="S";
(-12,0)*{}="P";
(0,0);"N" **\dir{-};
(0,0)*{\bullet};
"N"*{\bullet};
"S"*{\bullet};
"P"*{\bullet};
"N";"M" **\crv{(-12,4) & (0,4)};
"N";"M" **\crv{(12,4.5) & (0,4)};
"N";"S" **\crv{(8,-4) & (12,8)};
"N";"S" **\crv{(-8,-4) & (-12,8)};
"N";"P" **\crv{(-8,-5) & (-10,-2)};
"S";"P" **\crv{(-8,8) & (-10,4)};
(0,2)*{t_1};
(0,5.5)*{t_2};
(-9.5,0)*{t_3};
(-3.8,2)*{e_1};
(-5.6,5.5)*{e_2};
\endxy}
\\
\\
{\xy /l0.6pc/:
(0,4)*{}="M";
(0,-8)*{}="N";
(0,8)*{}="S";
(12,0)*{}="P";
(0,0);"N" **\dir{-};
(0,0)*{\bullet};
"N"*{\bullet};
"S"*{\bullet};
"P"*{\bullet};
"N";"M" **\crv{(-12,4) & (0,4)};
"N";"M" **\crv{(12,4.5) & (0,4)};
"N";"S" **\crv{(8,-4) & (12,8)};
"N";"S" **\crv{(-8,-4) & (-12,8)};
"N";"P" **\crv{(8,-5) & (10,-2)};
"S";"P" **\crv{(8,8) & (10,4)};
(0,2)*{t_3};
(0,5.5)*{t_2};
(9.5,0)*{t_1};
(4.15,2)*{e_2};
(5.75,5.5)*{e_1};
\endxy}
&
{\xy /l0.6pc/:
(5,-1)*{}="L1";
(7,2.5)*{}="L2";
(-5,-1)*{}="R1";
(-7,2.5)*{}="R2";
(0,-8)*{}="N";
(0,8)*{}="S";
(12,0)*{}="P";
"L1";"N" **\dir{-};
"L1"*{\bullet};
"R1";"N" **\dir{-};
"R1"*{\bullet};
"N"*{\bullet};
"N";"L2" **\crv{(2,5) & (6,3)};
"N";"L2" **\crv{(15,0) & (6,3)};
"N";"R2" **\crv{(-2,5) & (-6,3)};
"N";"R2" **\crv{(-15,0) & (-6,3)};
"N";"S" **\crv{(14,-4) & (18,8)};
"N";"S" **\crv{(-14,-4) & (-18,8)};
(6,0.5)*{t_1};
(0,4)*{t_2};
(-6,0.5)*{t_3};
(6,3.5)*{e_1};
(-6,3.5)*{e_2};
\endxy}
\end{array}
\]
\caption{Some possible configurations.\label{fig:configurations}}
\end{center} \end{figure}

Similarly note that a bad triangle cannot share an internal edge with a very bad triangle. Indeed, such an edge would have to be bad, and flipping it would lead to a triangulation with fewer bad edges. It is obvious that a good triangle cannot share an edge with a very bad triangle, and not all triangles can be very bad because we assumed that   the triangulation has at least one good edge. Thus we can assume that all triangles are bad.

Suppose that two bad triangles $t_1$, $t_2$ share an edge $e$. The union $t_1\cup t_2$ lifts to a quadrilateral $q$ in the universal cover of~$\mathbb{S}^*$, and the edge $e$ lifts to a diagonal of~$q$ which we will also denote by~$e$. Let us label the vertices of $q$ cyclically as $c_1,\cdots,c_4$ so that the edge $e$ connects $c_1$ and $c_3$, and the vertices of $t_1$ are $c_1,c_2,c_3$. The framings determine a line $\ell_i$ associated to each vertex $c_i$. We claim that there are exactly two distinct lines in the set $\{\ell_1,\cdots,\ell_4\}$. To see the claim, suppose first that $e$ is a good edge so that $\ell_1\neq \ell_3$. Then since the triangles $t_1$ and $t_2$ are bad we must have $\ell_2, \ell_4\in \{\ell_1,\ell_3\}$. Suppose instead that $e$ is bad so that $\ell_1=\ell_3$. Then if $\ell_2\neq \ell_4$ we can flip the edge $e$ to get a triangulation with fewer bad edges, contradicting maximality. The claim follows. 

We have now shown that parallel transporting the framing sections from the vertices of a triangle defines exactly two sections in the interior, and that for neighbouring triangles these sections coincide under parallel transport. It follows that the framed local system satisfies condition (D2) of Definition \ref{rn} which gives a contradiction.

We now return to the assumption that there is an ideal triangulation with at least one good edge. Suppose this fails. The surface $\bS$ is then necessarily closed. Choose an ideal triangulation. Since all edges are bad, it follows that there is a flat section $\ell$ of $\cL$ on $\bS^*$ such that each vertex is framed by~$\ell$. By non-degeneracy it follows that there is at least one puncture $p\in \bP$ such that the monodromy around $p$ is neither parabolic nor the identity.

Let us apply the corresponding element $\sigma_p$ of the group $\{\pm1\}^{\bP}$ to the framed local system. Then  the given triangulation has at least one good edge with respect to the new framed local system $\sigma_p(\cG,\ell)$. Note that by Remarks \ref{remas}(v) this flipped framed local system is still non-degenerate. Applying the argument above shows the existence of an ideal triangulation $T$ such that all Fock-Goncharov co-ordinates for $\sigma_p(\cG,\ell)$ are well-defined and non-zero. But then by definition, the co-ordinates for the framed local system $(\cG,\ell)$ with respect to the tagged  triangulation $\sigma_p(T)$ are also well-defined and non-zero.
\end{proof}

Since the maps \eqref{bira} are birational, one immediately obtains

\begin{cor}
For any marked bordered surface $(\bS,\bM)$ admitting an ideal triangulation the space  $\cX^*(\bS,\bM)$ is  a (possibly non-Hausdorff) complex manifold. 
\end{cor}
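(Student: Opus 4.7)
The plan is to use the Fock-Goncharov coordinate maps $X_\tau$ associated to tagged triangulations $\tau$ to build an atlas of holomorphic charts on $\cX^*(\bS,\bM)$. For each tagged triangulation $\tau$, let $U_\tau\subset \cX^*(\bS,\bM)$ denote the subset on which all Fock-Goncharov coordinates are regular and non-zero. Since $X_\tau$ is rational and non-vanishing is an open condition, $U_\tau$ is open. Theorem~\ref{clev} precisely asserts that the collection $\{U_\tau\}$, indexed by tagged triangulations of $(\bS,\bM)$, covers $\cX^*(\bS,\bM)$.

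First I would argue that on each $U_\tau$ the map $X_\tau$ gives a bijection onto an open subset of $(\bC^*)^n$. For an ideal triangulation $T$, this follows directly from the regular section $\iota_T\colon (\bC^*)^n \to X(\bS,\bM)$ constructed earlier: composing with the quotient by $G$ yields a rational inverse to $X_T$ which, by its explicit construction via cross-ratios, is regular and two-sided inverse to $X_T$ on $U_T$. For a tagged triangulation $\tau$ represented by a signed triangulation $(T,\epsilon)$, one factors $X_\tau$ as $X_T$ precomposed with the birational involution corresponding to $\epsilon \in\{\pm 1\}^\bP$. This involution is regular on the open locus where all relevant puncture monodromies have distinct eigenvalues, and $U_\tau$ lies inside that locus because the Fock-Goncharov coordinates of $\tau$ cannot be well-defined and non-zero at a framed local system where the involved eigenline-swaps degenerate.

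Next I would verify that the transition functions between charts are holomorphic. Any two tagged triangulations are related by a finite sequence of flips, and the explicit flip formula proved in the preceding proposition expresses the new coordinates as rational functions of the old ones, with denominators of the form $1+X_k^{\pm 1}$. These rational maps are regular precisely where all coordinates are non-zero and no denominator vanishes, which includes the image $X_\tau(U_\tau\cap U_{\tau'})$ under either chart. Combined with the previous step, this endows $\cX^*(\bS,\bM)$ with the structure of a complex manifold, though with no claim of Hausdorffness (the usual pathology when gluing charts by birational maps).

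The main obstacle I expect is the bookkeeping for tagged triangulations with nontrivial signings: one must carefully check that the exchange of eigenlines at a puncture, which is only a birational operation on $\cX(\bS,\bM)$, is genuinely a biholomorphism on the preimage of $U_\tau$, and that reconstruction from the tagged coordinates recovers the framed local system up to the $G$-action rather than only up to some ambiguity at the punctures. Once this local regularity of the $\{\pm 1\}^\bP$-action is established, the rest of the argument is a direct assembly of charts and transition maps from the ingredients already developed in Section~\ref{final}.
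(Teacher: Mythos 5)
There is a genuine gap in your first step: the map $X_\tau$ is \emph{not} injective (modulo $G$) on the locus $U_\tau$ where all Fock--Goncharov coordinates are merely regular and non-zero, so these sets are too large to serve as chart domains. The reason is that $X_j$ being well-defined and non-zero constrains only the four \emph{sides} of the quadrilateral associated to the arc $j$ (it forces $z_1\neq z_2$, $z_2\neq z_3$, $z_3\neq z_4$, $z_1\neq z_4$ in the notation of Definition~\ref{def:fockgoncharovcoordinates}), but says nothing about the diagonal $z_1$ versus $z_3$, i.e.\ about the framings at the two ends of $j$ itself. Concretely, take $(\bS,\bM)$ an unpunctured disc with four boundary marked points, so $n=1$ and a point of $X(\bS,\bM)$ is just an ordered $4$-tuple in $\bP^1$. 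The tuples $(0,1,0,\infty)$ and $(0,1,\infty,1)$ are both non-degenerate (no two consecutive entries coincide and neither lies in a two-element set), both have coordinate $X_j=-1$ for the triangulation $T$ whose arc joins the first and third vertices, yet they are not $\PGL_2(\bC)$-equivalent since a M{\"o}bius map preserves the pattern of coincidences. Hence $\iota_T$ is only a one-sided inverse on your $U_T$: the identity $\iota_T\circ X_T=\id$ fails at $(0,1,0,\infty)$, because the reconstruction always assigns three \emph{distinct} lines to the vertices of the initial triangle.

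The fix is to take as chart domains the smaller open sets $V_\tau\subset U_\tau$ on which every arc of $\tau$ is \emph{good} in the sense of the proof of Theorem~\ref{clev}, i.e.\ the framing lines at the two endpoints of every edge (including boundary segments) are distinct under parallel transport along it. On $V_\tau$ every triangle sees three pairwise distinct lines, the triangle-by-triangle reconstruction recovers the original point, and $\iota_\tau$ is a genuine two-sided inverse; moreover the proof of Theorem~\ref{clev} produces exactly such a $\tau$ at each non-degenerate point, so the $V_\tau$ still cover $\cX^*(\bS,\bM)$. A secondary, more minor issue: you verify holomorphy of transitions by composing flip formulas, but the tagged flip graph is disconnected for a once-punctured closed surface, so not every pair of tagged triangulations is flip-connected. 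It is cleaner (and covers all cases) to note that on an overlap $V_\tau\cap V_{\tau'}$ the transition is $X_{\tau'}\circ\iota_\tau$, a composite of a regular algebraic map with a rational map whose denominators do not vanish there, hence holomorphic without any appeal to flips.
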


This completes the proof of Theorem \ref{one}, generalising the construction of~\cite{A}, Theorem~4.5 to arbitrary marked bordered surfaces.

\bibliographystyle{amsplain}

\end{document}